\setlist[enumerate, 1]{1\textsuperscript{o}}
\newtheorem{thm}{Theorem}[section]
\newtheorem{lem}[thm]{Lemma}
\newtheorem{prop}[thm]{Proposition}
\theoremstyle{corollary}
\newtheorem{coro}[thm]{Corollary}
\theoremstyle{definition} 
\newtheorem{defi}[thm]{Definition}
\newtheorem{definition-lemma}[thm]{Definition-Lemma}
\newtheorem{definitionThm}[thm]{Definition-Theorem}
\newtheorem{ex}[thm]{Example}
\theoremstyle{remark}
\newtheorem{rmk}[thm]{Remark}
\newtheorem*{ack}{Acknowledgments}
\newtheorem{notation}[thm]{Notation}
\numberwithin{equation}{section}
\newcommand{\C}{\mathbb{C}}
\newcommand{\Q}{\mathbb{Q}}
\newcommand{\m}{\mathfrak{m}}
\DeclareRobustCommand{\O}{\mathcal{O}}
\DeclareMathOperator{\Hom}{\mathrm{Hom}}
\DeclareMathOperator{\Ext}{\mathrm{Ext}}
\def\Spec{\operatorname{Spec}}
\def\Supp{\operatorname{Supp}}
\def\codim{\operatorname{codim}}
\newcommand{\floor}[1]{\left\lfloor #1 \right\rfloor}
\newcommand{\ceil}[1]{\left\lceil #1 \right \rceil}
\let\oldframe\frame
\renewcommand\frame[1][allowframebreaks]{\oldframe[#1]}
\title[Rational singularities and $q$-birational morphisms]
{Rational singularities and $q$-birational morphisms}
\begin{document}

\author{Donghyeon Kim}
\address{Department of mathematics, Yonsei University, 50 Yonsei-Ro, Seodaemun-Gu, Seoul 03722, Korea}
\email{whatisthat@yonsei.ac.kr}

\date{\today}
\subjclass[2010]{14B05, 14E05, 14G17.}
\keywords{rational singularities, $q$-birational morphisms}

\begin{abstract}
In this paper, we generalize the notion of rational singularities for any reflexive sheaf of rank $1$, link our notion of rational singularities with the notion of rational singularities in \cite{Kovrational}, and prove generalizations of standard facts about rational singularities. Moreover, by using a definition of non-rational locus, we introduce the notion of $(B_{q+1})$ as a dual notion of well-known Serre's notion of $(S_{q+1})$, and prove a theorem about $q$-birational morphisms.
\end{abstract}

\maketitle

\section{Introduction}
In this paper, $q\ge 2$ is an integer, and $k$ is any algebraically closed field of arbitrary characteristic. Any variety is quasi-projective, separated, finite type and integral scheme over $k$. Any divisor is assumed to be Cartier unless otherwise stated.

Throughout the paper, we will assume that for any normal variety $X$, there is a proper birational morphism $f:X'\to X$ where $X'$ is smooth and $f$ is an isomorphism outside of the singular locus of $X$. We call such a map a \emph{resolution} of $X$. If $k$ is of characteristic $0$, our assumption follows from Hironaka's resolution of singularities, and if $k$ is of positive characteristic, only the threefold case is known to be affirmative (see \cite{Hir64} and \cite{Cut09}). Moreover, for a pair $(X,\Delta)$, a \emph{log resolution} $f:X'\to X$ of $(X,\Delta)$ is a resolution $f:X'\to X$ of $X$ such that $\Supp f^{-1}_*\Delta\cup \mathrm{Exc}(f)$ is simple normal crossing.

A variety $X$ is said to have \emph{rational singularities} if for any resolution $f:X'\to X$, $f_*\O_{X'}=\O_X, R^if_*\O_{X'}=0$ and $R^if_*\omega_{X'}=0$ for $i\ge 1$ (see \cite{Kollar2}, Definition 2.76, and introduction of \cite{rational}). The notion of rational singularities plays an important role in algebraic geometry. For instance, if $X$ is a variety with rational singularities and $f:X'\to X$ is a resolution of $X$, we have
$$ H^i(X',f^*\mathcal{E})\cong H^i(X,\mathcal{E})$$
for any $i\ge 0$ and any vector bundle $\mathcal{E}$ on $X$. In this paper, we generalize the notion to any reflexive sheaf of rank $1$.

The fundamental property of the notion of rational singularities is that if $X$ has rational singularities, then $X$ is CM. In this paper, from this perspective, we will define another analogous notion of rational singularities for a reflexive sheaf $\mathcal{F}$ of rank $1$ on a normal variety $X$.



\begin{defi}[See \cite{huybrechts2010geometry}, Definition 1.1.7]\thlabel{idefi2}
Let $X$ be any normal variety and $\mathcal{F}$ any coherent sheaf on $X$. Let us define the \emph{dual} of $\mathcal{F}$ by $\mathcal{F}^D:=\mathcal{H}om_{\O_X}(\mathcal{F},\omega_X)$, and the \emph{double dual} by $\mathcal{F}^{DD}:=((\mathcal{F})^D)^D$.
\end{defi}

Note that for a normal variety $X$ and a coherent sheaf $\mathcal{F}$ on $X$, $\mathcal{F}$ is reflexive if and only if the natural map $\mathcal{F}\to \mathcal{F}^{DD}$ is an isomorphism. See \thref{refe}.

\begin{definitionThm}\thlabel{idefi1}
Let $X$ be any normal variety and $\mathcal{F}$ any reflexive sheaf of rank $1$ on $X$. Suppose that $f:X'\to X$ is a generically finite morphism with $X'$ smooth. Then we have a natural map
$$ \theta_{\mathcal{F},f}:Rf_*(f^*\mathcal{F})^D\to R\mathcal{H}om_{\O_X}(\mathcal{F},\omega^{\bullet}_X)[-\dim X].$$
Moreover, if $f$ is a resolution, then the following are equivalent:
\begin{itemize}
    \item[(a)] $\theta_{\mathcal{F},f}$ is a quasi-isomorphism.
    \item[(b)] $R^if_*(f^*\mathcal{F})^{DD}=0$ for $i\ge 1$.
    \item[(c)] The natural map
    $$ H^i_x(X_x,\mathcal{F}_x)\to H^i_{f^{-1}(x)}(X'_x,(f^*\mathcal{F})^{DD}_x)$$
    is an isomorphism for each $i\ge 0$ and each point $x\in X$.
\end{itemize}
We say that $f$ is a \emph{weakly rational resolution} of $\mathcal{F}$ if one of the above is true for $f$, and $\mathcal{F}$ has \emph{weakly rational singularities} if there is a weakly rational resolution of $\mathcal{F}$.
\end{definitionThm}

\begin{defi}
Let $X$ be any normal variety and $\mathcal{F}$ any reflexive sheaf of rank $1$ on $X$.
\begin{itemize}
\item[(a)] If $\mathcal{F}$ has weakly rational singularities and is CM, then we say that $\mathcal{F}$ has \emph{rational singularities}.
\item[(b)] We say that a Weil divisor $D$ on $X$ has \emph{rational singularities} if $\O_X(D)$ has rational singularities.
\item[(c)] Let $f:X'\to X$ be a resolution. We say $f$ is \emph{$(KV_q)$ about $\mathcal{F}$} if the support of $R^if_*(f^*\mathcal{F})^D$ has codimension $\ge i+q+1$ for any $i\ge 1$. We call $\mathcal{F}$ $(KV_q)$ if there is a resolution of $X$ which is $(KV_q)$ about $\mathcal{F}$.
\end{itemize}
\end{defi}


Note that our definition bears a similarity to Definition 2.5 in \cite{Kovrational} and Definition 2.78 in \cite{Kollar2}. Let us recall the definition of rational singularities for a reduced pair $(X,D)$.

\begin{defi}[See Definition 2.5, and Theorem 2.9 in \cite{Kovrational}]
Let $X$ be a normal variety, and $D$ a reduced Weil divisor on $X$. We say that $(X,D)$ has \emph{rational singularities} if there is a log resolution $f:(X',D_{X'})\to (X,D)$ such that
\begin{itemize}
    \item[(a)] $(X,D)$ is a normal pair in the sense that the natural map $\O_X(-D)\to f_*\O_{X'}(-D_{X'})$ is an isomorphism, and
    \item[(b)] $R^if_*\O_{X'}(-D_{X'})=0$ for $i>0$,
\end{itemize}
where $D_{X'}$ is the strict transform of $D$ along $f$.
\end{defi}

Here, we highlight the distinctions between our definition and those in \cite{Kovrational} and \cite{Kollar2}:

\begin{itemize}
\item We used double dual to define the notion of rational singularities instead of strict transform. The advantage of using double dual is that the notion of normal pair in \cite{Kovrational} is unnecessary, and if we consider \thref{stronger}, our definition of rational singularities is minimal in sense of the requirement of the vanishing of higher direct image and CM-ness.

In particular, if a pair $(X,D)$ has rational singularities in the sense of \cite{Kovrational} and \cite{Kollar2}, then $-D$ has rational singularities in our sense. See \thref{pleaseseeme}.

\item It is obvious that some Cartier divisor on $X$ has rational singularities in our sense if and only if $X$ has rational singularities, and in contrast to our notion, it is far from obvious that for some reduced, effective and Cartier divisor $D$ on $X$, $(X,D)$ has rational singularities in the sense of \cite{Kovrational} if and only if $X$ has rational singularities. For the only if assertion, see Corollary 2.13 in \cite{Kovrational}. Note that the proof of Corollary 2.13 in \cite{Kovrational} implicitly used our notion of rational singularities.

\item We defined the notion of rational singularities for any reflexive sheaf of rank $1$, not only for a reduced Weil divisor.
\end{itemize}

One of the fundamental properties of klt and strongly $F$-regular varieties is that klt varieties over a characteristic $0$ field and strongly $F$-regular varieties over a positive characteristic field have rational singularities (see \cite{Elk81}, \cite{Kawamata} and \cite{HW19}). The same principle holds in our definition of rational singularities for reflexive sheaves of rank $1$.

\begin{thm}\thlabel{iex1}
Let $X$ be any normal variety over a characteristic $0$ field, $\Delta$ any effective $\Q$-Weil divisor such that $(X,\Delta)$ is dlt and $D$ any $\Q$-Cartier Weil divisor on $X$. Then $D$ has rational singularities.
\end{thm}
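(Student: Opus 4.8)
The plan is to establish separately the two conditions that, by the Definition following \thref{idefi1}, make up ``$D$ has rational singularities'': that $\mathcal F:=\O_X(D)$ is CM, and that some resolution of $X$ is a weakly rational resolution of $\mathcal F$, checked via criterion (b) of \thref{idefi1}. Both statements are local on $X$, so I may shrink $X$, fix an integer $m\ge 1$ with $mD$ Cartier, and fix a trivialization $\O_X(mD)\cong\O_X$. The first thing I would record is the elementary identity, valid for any resolution $f\colon X'\to X$,
$$(f^*\O_X(D))^{DD}\cong\O_{X'}(\floor{f^*D}),$$
where $f^*D$ is the $\Q$-Cartier pullback; this is a local computation comparing two reflexive rank-$1$ sheaves on the smooth scheme $X'$ at the finitely many exceptional prime divisors, and it uses precisely that $mD$ is Cartier (so that $f^*(mD)$ is integral). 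The same identity holds with $D$ replaced by any multiple $jD$.

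The core of the argument is to bootstrap from the classical fact that a dlt pair has rational singularities, hence is CM. Using the trivialization, form the degree-$m$ cyclic (``index $1$'') cover $\pi\colon Y:=\Spec_X\bigoplus_{j=0}^{m-1}\O_X(jD)\to X$. It is finite, $Y$ is normal, and $\pi$ is \'etale in codimension $1$; so $K_Y=\pi^*K_X$, and with $\Delta_Y:=\pi^*\Delta$ we get $K_Y+\Delta_Y=\pi^*(K_X+\Delta)$, which is $\Q$-Cartier. Since $\pi$ is finite and \'etale in codimension $1$, $(Y,\Delta_Y)$ is again dlt, so $Y$ has rational singularities and in particular is CM. Because $\pi$ is finite, $\depth$ can be computed on $X$, and $\pi_*\O_Y=\bigoplus_{j=0}^{m-1}\O_X(jD)$; hence $Y$ being CM forces each summand $\O_X(jD)$, and in particular $\O_X(D)$, to be CM. This disposes of the CM condition.

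For the weakly rational resolution, fix a log resolution $f\colon X'\to X$ of $(X,\Delta+D)$, so that $\Supp\{f^*D\}$ is contained in the exceptional simple normal crossing divisor. Over $X'$ form the cyclic cover $\widetilde Y:=\Spec_{X'}\bigoplus_{j=0}^{m-1}\O_{X'}(\floor{jf^*D})$ associated with the $\Q$-divisor $f^*D$ (a sheaf of algebras by superadditivity of $\floor{\,\cdot\,}$, using the pullback of the trivialization). Then $\widetilde Y$ is normal, has only abelian quotient singularities along $\Supp\{f^*D\}$ (hence rational singularities), and, since $f$ is an isomorphism over a dense open subset of $X$, the natural $X$-morphism $\widetilde Y\to Y$ is birational; as $\widetilde Y\to X$ is proper and $Y\to X$ is finite, it is proper, so a resolution $\hat Y\to\widetilde Y$ gives a resolution $\hat Y\to Y$. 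I would compute $R(\hat Y\to X)_*\O_{\hat Y}$ in two ways: through $Y$, using that $Y$ has rational singularities and $\pi$ is finite, it equals $\pi_*\O_Y=\bigoplus_j\O_X(jD)$, a sheaf; through $X'$, using that $\widetilde Y$ has rational singularities and $\widetilde Y\to X'$ is finite, one gets $R(\hat Y\to X')_*\O_{\hat Y}=(\widetilde Y\to X')_*\O_{\widetilde Y}=\bigoplus_j\O_{X'}(\floor{jf^*D})$, so $R(\hat Y\to X)_*\O_{\hat Y}=\bigoplus_j Rf_*\O_{X'}(\floor{jf^*D})$. Since this complex is a sheaf, its higher cohomology vanishes, i.e.\ $R^if_*\O_{X'}(\floor{jf^*D})=0$ for $i\ge 1$ and every $j$; for $j=1$, combined with the identity of the first paragraph, this is exactly $R^if_*(f^*\O_X(D))^{DD}=0$ for $i\ge 1$, so $f$ is a weakly rational resolution of $\O_X(D)$. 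Gluing over a trivializing affine cover of $X$ and combining with the CM condition, $D$ has rational singularities.

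The step I expect to be the main obstacle is the transfer of dlt-ness: that $(X,\Delta)$ dlt implies $(Y,\Delta_Y)$ dlt along the \'etale-in-codimension-$1$ cover $\pi$, and that $\widetilde Y$ is really the ``matching'' cover over $X'$ (birational to $Y$ over $X$, with only quotient singularities). Unlike lc- or klt-ness, dlt-ness is somewhat fragile under covers, so one must check with care that a log resolution realizing the discrepancy bound pulls back appropriately. A secondary delicate point is the identification $(f^*\O_X(D))^{DD}\cong\O_{X'}(\floor{f^*D})$, which genuinely uses that $D$ is $\Q$-Cartier and breaks down otherwise. An alternative route to the vanishing, avoiding $\widetilde Y$, would apply relative Kawamata--Viehweg vanishing directly after writing $\O_{X'}(\floor{f^*D})=\O_{X'}\!\left(K_{X'}+f^*(D-K_X-\Delta)+B\right)$ for a simple normal crossing $\Q$-divisor $B$ built from the discrepancies and $\{f^*D\}$; there the delicate bookkeeping is the presence of the coefficient-$1$ components of $\Delta$.
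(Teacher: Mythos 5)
The decisive step in your argument is the ``elementary identity'' $(f^*\O_X(D))^{DD}\cong\O_{X'}(\floor{f^*D})$, which you use to turn the cyclic--cover vanishing $R^if_*\O_{X'}(\floor{jf^*D})=0$ into condition (b) of \thref{idefi1}. Only one inclusion is formal: a local section $g$ of $\O_X(D)$ satisfies $\mathrm{div}(f^*g)+\floor{f^*D}\ge 0$, so $(f^*\O_X(D))^{DD}\subseteq\O_{X'}(\floor{f^*D})$; equality along an exceptional prime $E$ requires a local section of $\O_X(D)$ whose pullback has valuation exactly $-\floor{\mathrm{coeff}_E f^*D}$ along $E$, and this is \emph{not} a consequence of $mD$ being Cartier. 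For a counterexample, let $X$ be the cone over an elliptic curve $C$ polarized by a degree-$3$ line bundle $A$, let $P,O\in C$ with $P-O$ a nontrivial torsion class, and $D:=C_P-C_O$ the difference of the two rulings; then $nD\sim 0$, so $D$ is $\Q$-Cartier, and on the blowup $f$ of the vertex (exceptional divisor $E\cong C$) one has $\mathrm{coeff}_E f^*D=0$, so $\O_{X'}(\floor{f^*D})=\O_{X'}(C'_P-C'_O)$, while $\O_X(D)\cong\bigoplus_k H^0(C,kA+P-O)$ has no nonzero element of degree $0$ (since $H^0(P-O)=0$), whence $(f^*\O_X(D))^{DD}=\O_{X'}(C'_P-C'_O-E)$. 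So the identity fails for $\Q$-Cartier Weil divisors on normal (even log canonical) singularities; whether it holds on klt or dlt $X$ is exactly what would have to be proved, so as written your bridge from the cover computation to \thref{idefi1}(b) is a genuine gap. (By contrast, the dlt transfer to the quasi-\'etale cover that you flagged as the main obstacle is fine: over the open set where the pair is snc the cover is \'etale by purity, and elsewhere the discrepancy formula applies; alternatively one first perturbs to a klt pair.)

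This is precisely the point the paper is careful about, and also where your route diverges from it: the paper avoids cyclic covers altogether, and its \thref{areyouwrong} only asserts the one-sided statement $L+E\sim_{\Q}f^*D$ with $E\ge 0$ exceptional, where $\O_{X'}(L)=(f^*\O_X(D))^{DD}$; it then works with the line bundle $\O_{X'}(L+\floor{E})=\O_{X'}(\floor{f^*D})$, proves $f_*\O_{X'}(\floor{f^*D})=\O_X(D)$ by a torsion-cokernel argument as in \thref{goo}, and transfers the relative Kawamata--Viehweg vanishing back to $(f^*\O_X(D))^{DD}$ via \thref{twospectral} and \thref{stronger}, after reducing dlt to klt by perturbation. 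Your cyclic-cover argument does correctly give $R^if_*\O_{X'}(\floor{jf^*D})=0$ for $i\ge1$, and your covering argument for CM-ness of $\O_X(jD)$ is sound, so the proof can be repaired by replacing the false identity with the same comparison device: check $f_*\O_{X'}(\floor{f^*D})=\O_X(D)$ and apply \thref{twospectral} to the natural injection $(f^*\O_X(D))^{DD}\hookrightarrow\O_{X'}(\floor{f^*D})$ (both pushforwards being $\O_X(D)$, by \thref{goo}), which yields $R^if_*(f^*\O_X(D))^{DD}=0$ and hence \thref{idefi1}(b).
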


\begin{thm}\thlabel{iex11}
Let $X$ be any strongly $F$-regular variety over a positive characteristic field, and $D$ any Weil divisor on $X$ such that there is an integer $r$ which is relatively prime to $p$, and $rD$ is Cartier. Then $D$ has rational singularities.
\end{thm}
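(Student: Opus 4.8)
The plan is to argue in parallel with the proof of \thref{iex1}: the cyclic covering trick reduces the assertion to a statement about the structure sheaf of a strongly $F$-regular variety, and the positive-characteristic theory of $F$-rational, here strongly $F$-regular, singularities takes over the role that Kawamata--Viehweg vanishing and the Cohen--Macaulayness of klt singularities play in characteristic $0$. Since $D$ is $\Q$-Cartier, both the Cohen--Macaulayness of $\O_X(D)$ and, for a fixed log resolution $f\colon X'\to X$ of $(X,D_{\mathrm{red}})$, the vanishing $R^if_*(f^*\O_X(D))^{DD}=0$ for $i\ge 1$ (criterion (b) of \thref{idefi1}) are local on $X$; so we may work over an affine open $U\subseteq X$ with $\O_X(rD)|_U\cong\O_U$. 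Such a trivialisation produces the associated cyclic cover $\pi\colon Y\to U$ with $\pi_*\O_Y=\bigoplus_{i=0}^{r-1}\O_U(iD)$ and a $\mu_r$-action whose $\chi^{i}$-isotypic component of $\pi_*\O_Y$ is $\O_U(iD)$; as $\gcd(r,p)=1$, the morphism $\pi$ is tamely ramified, étale in codimension one, and $Y$ is a normal variety (after replacing $r$ by the order of $D$ in the local class group we may take $Y$ integral).

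The first essential input is that $Y$ is again strongly $F$-regular: a finite cover of a strongly $F$-regular variety that is étale in codimension one and generically separable --- in particular a tame cyclic cover --- is strongly $F$-regular, and for the sequel it already suffices that $Y$ be $F$-rational, which likewise ascends along such covers. Hence $Y$ is Cohen--Macaulay, and by the theorems of Smith, Hara and Mehta--Srinivas (see the discussion around \cite{HW19}) $Y$ has rational singularities in the usual sense. Cohen--Macaulayness of $\O_U(D)$ follows at once: $\pi_*\O_Y$ is Cohen--Macaulay as an $\O_U$-module, being the direct image under the finite morphism $\pi$ of $\O_Y$ from the Cohen--Macaulay variety $Y$, and $\O_U(D)$ is a direct summand of it.

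For the vanishing I would form the normalisation $\overline W$ of $(f^{-1}U)\times_U Y$, with its finite morphism $h\colon\overline W\to f^{-1}U$ and its proper birational morphism $g\colon\overline W\to Y$, and write $\mathcal G_i$ for the $\chi^{i}$-isotypic component of $h_*\O_{\overline W}$, so that $\mathcal G_i$ is a reflexive rank-one sheaf on $f^{-1}U$ agreeing with the pullback of $\O_U(iD)$ away from the exceptional locus of $f$. Since $f$ is a log resolution, $\overline W\to f^{-1}U$ is a tame cyclic cover of a smooth variety with branch locus contained in the simple normal crossing divisor $f^{-1}_*D\cup\mathrm{Exc}(f)$, so by Abhyankar's lemma $\overline W$ has, étale-locally, a tame quotient singularity; in particular $\overline W$ has rational singularities. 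Passing through a resolution of $\overline W$ and combining the rational-singularity statements for $Y$ and for $\overline W$ gives $Rg_*\O_{\overline W}=\O_Y$, whence, using $R^{>0}h_*=R^{>0}\pi_*=0$ and $f\circ h=\pi\circ g$,
\[
  \textstyle\bigoplus_i R^if_*\mathcal G_i=R^i(f\circ h)_*\O_{\overline W}=R^i(\pi\circ g)_*\O_{\overline W}=R^i\pi_*\O_Y=0\qquad(i\ge 1);
\]
as $\gcd(r,p)=1$ the $\mu_r$-isotypic decomposition is exact, so each summand vanishes, in particular $R^if_*\mathcal G_1=0$ for $i\ge 1$. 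It remains to identify $\mathcal G_1$ with $(f^*\O_X(D))^{DD}$; this is a comparison, along the exceptional divisors of $f$, of the reflexive hull of $f^*\O_X(D)$ with the graded piece of the normalised fibre product, and is where the resolution must be chosen with care. Granting it, $R^if_*(f^*\O_X(D))^{DD}=0$ for $i\ge 1$, so $f$ is a weakly rational resolution of $\O_X(D)$ by \thref{idefi1}(b); together with the Cohen--Macaulayness above, $\O_X(D)$ has rational singularities, i.e. $D$ has rational singularities.

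I expect the main obstacle to be twofold, and neither part is elementary. First, the claim that the cyclic cover $Y$ is strongly $F$-regular (or $F$-rational): this is the step at which strong $F$-regularity must be propagated across a morphism that is only étale in codimension one, and it rests on the good behaviour of strong $F$-regularity and $F$-rationality under tame covers --- ultimately a tameness statement for the local fundamental group of a strongly $F$-regular singularity. Second, the identification $\mathcal G_1\cong(f^*\O_X(D))^{DD}$: the covering trick on its own only delivers vanishing for the graded pieces of the normalised fibre product, and these differ from the reflexive pullbacks of the $\O_X(iD)$ by exceptional corrections, so controlling those corrections --- which is what makes the passage from $\O_Y$ on $Y$ back to $\O_X(D)$ on $X$ legitimate --- is the technical heart of the argument.
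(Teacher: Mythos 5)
Your route is genuinely different from the paper's. The paper does not use the cyclic covering trick at all (it even remarks, about the characteristic-$0$ analogue \thref{iex1}, that its method avoids cyclic covering theory): it takes $r=p^e-1$, puts a Frobenius action on $H^{\codim_X x}_x(X,\O_X(D))$, shows via strong $F$-regularity and Matlis duality that every proper $F$-stable submodule vanishes, and then, following Smith and Patakfalvi--Schwede, identifies the kernel of the edge map to $H^{\codim_X x}_{f^{-1}(x)}(X',(f^*\O_X(D))^{DD})$ as an $F$-stable submodule; induction on the Leray spectral sequence together with the CM-ness of $\O_X(D)$ (Patakfalvi--Schwede) then gives $R^if_*(f^*\O_X(D))^{DD}=0$ directly. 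This sidesteps both of the inputs you flag: no ascent of $F$-regularity along covers, and no comparison between the cover's graded pieces and the reflexivized pullback.

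As written, your argument has a genuine gap exactly where you say it does, and it is not a routine verification: the degree-one eigensheaf $\mathcal{G}_1$ of the normalized cover is $\O_{X'}(\lfloor f^*D\rfloor)$ (with $f^*D:=\tfrac1r f^*(rD)$), whereas $(f^*\O_X(D))^{DD}=\O_{X'}(L)$ with $L\le \lfloor f^*D\rfloor$ and the difference an effective $f$-exceptional divisor (cf.\ \thref{areyouwrong}) that need not vanish; so the vanishing you extract from the cover is a priori about the wrong line bundle, and transporting vanishing across such exceptional corrections is precisely what relative Kawamata--Viehweg does in the proof of \thref{iex1} and is unavailable in characteristic $p$. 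Also, your first input (that the tame cyclic cover $Y$ is again strongly $F$-regular) is true but is a theorem (Watanabe; Carvajal-Rojas--Schwede--Tucker) that must be cited, and the analogous ascent for $F$-rationality that you invoke as an alternative is not something you can wave at. Note, however, that the identification of $\mathcal{G}_1$ with $(f^*\O_X(D))^{DD}$ can be avoided entirely: \thref{stronger} only requires a line bundle $\mathcal{L}$ with $f_*\mathcal{L}=\O_X(D)$, $R^{>0}f_*\mathcal{L}=0$ and $R^{>0}f_*\mathcal{L}^D=0$, and $\mathcal{L}=\mathcal{G}_1$ does the job: $f_*\mathcal{G}_1=\O_X(D)$ and $R^{>0}f_*\mathcal{G}_1=0$ follow from your covering computation, while $R^{>0}f_*\mathcal{G}_1^D=0$ follows from \thref{warmup1} combined with the Leray spectral sequence and the CM-ness of $\O_X(D)$ you already established. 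With that replacement (and proper citations for the ascent of strong $F$-regularity, for the tame/Abhyankar structure of $\overline W$, and for resolution-independence of rationality in characteristic $p$), your cover-theoretic proof can be completed, but it remains longer and heavier than the paper's direct Frobenius argument.
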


Note that \thref{iex1} is a generalization of Corollary 5.25 in \cite{KM} and the proof does not require the cyclic covering theory.	

One of the advantages of our notion is that our notion of rational singularities is well-behaved under finite morphisms under mild conditions, and it provides a partial answer to a question posed by Kollár (see Remark 2.81 (3) in \cite{Kollar2}).

\begin{thm}\thlabel{iquo}
Let $X,Y$ be normal varieties over a characteristic $0$ field, $p:Y\to X$ any finite {\'e}tale morphism and $\mathcal{F}$ any reflexive sheaf of rank $1$ on $X$. If $p^*\mathcal{F}$ has (resp. weakly) rational singularities, then $\mathcal{F}$ has (resp. weakly) rational singularities.
\end{thm}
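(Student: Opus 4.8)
The plan is to check criterion (b) of \thref{idefi1} on a suitably chosen resolution of $X$ and to transport the vanishing of higher direct images across $p$ by faithfully flat base change. First I would fix a resolution $f\colon X'\to X$ and form the fibre product $Y':=Y\times_X X'$, with projections $g\colon Y'\to Y$ and $q\colon Y'\to X'$. Since $p$ is finite {\'e}tale, $q$ is finite {\'e}tale and surjective, $Y'$ is smooth (being {\'e}tale over the smooth $X'$), and $g$ is a resolution of $Y$: it is proper birational and an isomorphism over $Y\setminus p^{-1}(\Sing X)=Y\setminus\Sing Y$. Because $p$ is flat and unramified one has $p^*\omega_X=\omega_Y$, so $p^*$ commutes with $\mathcal{H}om_{\O_X}(-,\omega_X)$, hence with $(-)^{DD}$; in particular $(p^*\mathcal{F})^{DD}=p^*(\mathcal{F}^{DD})=p^*\mathcal{F}$, so $p^*\mathcal{F}$ is again a reflexive sheaf of rank $1$ on $Y$ (see \thref{refe}), and it is meaningful to speak of its weakly rational singularities. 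The same reasoning applies to $q$, so $q^*$ too commutes with $(-)^{DD}$.

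Write $L:=(f^*\mathcal{F})^{DD}$, which is a line bundle because $X'$ is smooth. From $p\circ g=f\circ q$ we obtain
$$(g^*p^*\mathcal{F})^{DD}=(q^*f^*\mathcal{F})^{DD}=q^*\bigl((f^*\mathcal{F})^{DD}\bigr)=q^*L,$$
a line bundle on $Y'$, and flat base change along the flat $p$ for the proper $f$ gives isomorphisms
$$p^*R^if_*L\;\cong\;R^ig_*\,q^*L\;=\;R^ig_*(g^*p^*\mathcal{F})^{DD}\qquad(i\ge 0).$$
Now suppose $p^*\mathcal{F}$ has weakly rational singularities. Then $g$ is a weakly rational resolution of $p^*\mathcal{F}$; here one uses that, for a fixed reflexive sheaf of rank $1$, being a weakly rational resolution is independent of the chosen resolution — the analogue of the classical fact that rational singularities can be tested on a single resolution, which I expect to be recorded earlier in the paper and which is in any case visible from the germ-wise criterion (c) of \thref{idefi1}. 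Applying (b) to $g$ gives $R^ig_*(g^*p^*\mathcal{F})^{DD}=0$ for $i\ge 1$, hence $p^*R^if_*L=0$ for $i\ge 1$, and since $p$ is faithfully flat we conclude $R^if_*(f^*\mathcal{F})^{DD}=R^if_*L=0$ for $i\ge 1$. Thus $f$ satisfies (b), so $\mathcal{F}$ has weakly rational singularities. If in addition $p^*\mathcal{F}$ is CM, then $\mathcal{F}$ is CM by faithfully flat descent: $p$ is faithfully flat with $0$-dimensional regular fibres, so for $y\in Y$ over $x\in X$ one has $\depth_{\O_{Y,y}}(p^*\mathcal{F})_y=\depth_{\O_{X,x}}\mathcal{F}_x$ and $\dim\O_{Y,y}=\dim\O_{X,x}$, and surjectivity of $p$ does the rest; combining the two cases yields the statement for rational singularities.

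The displayed identities and the base-change steps are routine; the one input carrying genuine content is the independence of "weakly rational resolution" from the chosen resolution, which is exactly what lets us replace an arbitrary weakly rational resolution of $p^*\mathcal{F}$ by the fibre-product resolution $g$. The minor technical points to watch are that $q^*$ commutes with $(-)^{DD}$ (flat base change for $\mathcal{H}om$ together with $q^*\omega_{X'}=\omega_{Y'}$) and that $(f^*\mathcal{F})^{DD}$ is an honest line bundle on the smooth $X'$; neither presents a real obstacle. Finally, the characteristic-$0$ hypothesis is used only to provide resolutions of $X$ and of $Y$, while the {\'e}tale hypothesis is essential — it is what makes $g$ a resolution and supplies the faithfully flat base change.
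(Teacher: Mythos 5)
Your argument is correct in substance, but it descends the vanishing by a genuinely different mechanism than the paper. The paper builds the same Cartesian square (via functorial resolution, Theorem 3.36 in \cite{Kol07}, which also guarantees that $Y'\to Y$ is a resolution), but then pushes forward along the finite morphism $p'\colon Y'\to X'$: using the characteristic-zero trace splitting $(f^*\mathcal{F})^{DD}\to p'_*(p')^*(f^*\mathcal{F})^{DD}\to (f^*\mathcal{F})^{DD}$ (Lemma 3.17 in \cite{KTT+22}) and the Grothendieck spectral sequences for the finite maps $p,p'$, it exhibits $R^if_*(f^*\mathcal{F})^{DD}$ as a direct summand of $p_*R^if'_*\bigl((f'^*p^*\mathcal{F})^{DD}\bigr)=0$, and descends CM-ness by \thref{referee}, as in your depth computation. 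You instead pull back along the faithfully flat $p$ and use flat base change, $p^*R^if_*(f^*\mathcal{F})^{DD}\cong R^ig_*(g^*p^*\mathcal{F})^{DD}$, so no splitting is needed; this is cleaner, and characteristic $0$ then enters only through resolutions and the independence point below, which is notable given the paper's remark hoping for a positive-characteristic version. What the splitting route buys instead is the prospect of treating finite morphisms that are not flat (compare \thref{iquo2} and the remark after the proof), where faithfully flat base change is unavailable. Two caveats: the independence of ``weakly rational resolution'' from the chosen resolution is in fact not recorded in the paper, and it is not immediate from criterion (c) of \thref{idefi1}, which still refers to the chosen $f$; however, the paper's own proof invokes exactly the same unproved step for its specific $f'$, so you are no worse off. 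Also, you should verify that $Y\times_X X'$ is irreducible (each connected component is finite \'etale over the irreducible $X'$, hence dominates $Y$, and two components would disconnect $g^{-1}(p^{-1}(X\setminus\Sing X))$), so that $g$ really is a resolution in the paper's sense; the paper gets this for free from functoriality.
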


\begin{thm}\thlabel{iquo2}
Let $X,Y$ be normal varieties over a characteristic $0$ field, $p:Y\to X$ any finite flat morphism and $\mathcal{F}$ any reflexive sheaf of rank $1$ on $X$. If $\mathcal{F}$ is $(KV_{\dim X})$ and $p^*\mathcal{F}$ has rational singularities, then $\mathcal{F}$ has rational singularities.
\end{thm}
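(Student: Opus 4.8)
The plan is to prove separately the two properties comprising ``$\mathcal{F}$ has rational singularities'': that $\mathcal{F}$ is Cohen--Macaulay, and that $\mathcal{F}$ is weakly rational. For Cohen--Macaulayness, since $p$ is finite flat and $\operatorname{char}k=0$ the map $\tfrac{1}{\deg p}\operatorname{tr}\colon p_*\O_Y\to\O_X$ splits the unit $\O_X\hookrightarrow p_*\O_Y$, so $\O_X$ is an $\O_X$-module direct summand of $p_*\O_Y$. Tensoring with $\mathcal{F}$ and using the (exact) identification $p_*p^*\mathcal{F}\cong\mathcal{F}\otimes_{\O_X}p_*\O_Y$, we find that $\mathcal{F}$ is a direct summand of $p_*p^*\mathcal{F}$, and dually $\mathcal{F}^D$ is a direct summand of $(p_*p^*\mathcal{F})^D$. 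Because $p$ is finite, $p_*$ carries a Cohen--Macaulay sheaf of dimension $\dim X$ to one of the same type; since $p^*\mathcal{F}$ is Cohen--Macaulay by hypothesis, so is $p_*p^*\mathcal{F}$, hence so is its summand $\mathcal{F}$, and likewise $\mathcal{F}^D$ (a summand of the Cohen--Macaulay sheaf $(p_*p^*\mathcal{F})^D$) is Cohen--Macaulay.

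Next I reduce weak rationality to a statement about one sheaf. Fix a resolution $f\colon X'\to X$ which is $(KV_{\dim X})$ about $\mathcal{F}$. A nonempty closed subset of $X$ has codimension at most $\dim X<i+\dim X+1$, so the hypothesis forces $R^if_*(f^*\mathcal{F})^D=0$ for all $i\ge 1$, i.e. $Rf_*(f^*\mathcal{F})^D$ is the single sheaf $\mathcal{G}:=f_*(f^*\mathcal{F})^D$. Since $X'$ is smooth, $(f^*\mathcal{F})^{DD}$ is a line bundle and $(f^*\mathcal{F})^D=\mathcal{H}om_{\O_{X'}}\!\big((f^*\mathcal{F})^{DD},\omega_{X'}\big)=R\mathcal{H}om_{\O_{X'}}\!\big((f^*\mathcal{F})^{DD},\omega^\bullet_{X'}\big)[-\dim X]$; applying Grothendieck duality for the proper $f$ (with $f^!\omega^\bullet_X=\omega^\bullet_{X'}$) yields a natural isomorphism
\[
Rf_*(f^*\mathcal{F})^{DD}\;\cong\;R\mathcal{H}om_{\O_X}(\mathcal{G},\omega^\bullet_X)[\dim X].
\]
By \thref{idefi1} it is enough to show $R^if_*(f^*\mathcal{F})^{DD}=0$ for $i\ge 1$, which by the displayed isomorphism means that $R\mathcal{H}om_{\O_X}(\mathcal{G},\omega^\bullet_X)$ is concentrated in cohomological degree $-\dim X$, i.e. that $\mathcal{G}$ is a Cohen--Macaulay sheaf with $\operatorname{Supp}\mathcal{G}=X$. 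Now $\mathcal{G}$ is torsion-free (a pushforward of a line bundle under a proper birational morphism) and, on the big open set where $f$ is an isomorphism and $X$ is smooth, it agrees with $\mathcal{F}^D$; since $\mathcal{F}^D$ is reflexive, hence $S_2$, it suffices to prove that the natural inclusion $\mathcal{G}\hookrightarrow\mathcal{F}^D$ (the degree-zero part of $\theta_{\mathcal{F},f}$, using that $\mathcal{F}$ is Cohen--Macaulay so $R\mathcal{H}om_{\O_X}(\mathcal{F},\omega^\bullet_X)[-\dim X]=\mathcal{F}^D$) is an equality, equivalently that $\mathcal{G}$ is $S_2$.

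This is where the cover enters. Choose a weakly rational resolution $g\colon Y'\to Y$ of $p^*\mathcal{F}$ (it exists, as $p^*\mathcal{F}$ is reflexive—$p$ being flat—and has rational singularities), and set $h:=p\circ g\colon Y'\to X$. From \thref{idefi1} together with the Grothendieck-duality identity above applied to $g$ one gets $Rg_*(g^*p^*\mathcal{F})^{DD}=p^*\mathcal{F}$; pushing forward by the finite $p$ gives $Rh_*(h^*\mathcal{F})^{DD}=p_*p^*\mathcal{F}$, and rerunning the computation of the previous paragraph for $h$—now using that $p_*p^*\mathcal{F}$ is Cohen--Macaulay—shows that $Rh_*(h^*\mathcal{F})^D$ is the single sheaf $(p_*p^*\mathcal{F})^D$, which has $\mathcal{F}^D$ as a direct summand. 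To compare the birational $f$ with the degree-$(\deg p)$ morphism $h$, take a smooth $W$ with proper morphisms $b\colon W\to Y'$ birational and $a\colon W\to X'$ generically finite such that $f\circ a=h\circ b=:e$ (a resolution of the main component of $Y'\times_X X'$), and analyze $Re_*(e^*\mathcal{F})^D$ through both $a$ and $b$, using the ramification formulas $\omega_W=a^*\omega_{X'}(R_a)=b^*\omega_{Y'}(R_b)$ with $R_a,R_b$ effective in characteristic $0$, the comparison of $(e^*\mathcal{F})^{DD}$ with the pullbacks $a^*(f^*\mathcal{F})^{DD}$, $b^*(h^*\mathcal{F})^{DD}$ (which differ by effective divisors supported over the relevant codimension-$\ge 2$ loci), the projection formula, and relative Grauert--Riemenschneider vanishing along $b$. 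The aim is to realize $\mathcal{G}$ as a subsheaf of the $S_2$ sheaf $(p_*p^*\mathcal{F})^D$ that contains $\mathcal{F}^D$ and coincides with it in codimension $1$, which forces $\mathcal{G}=\mathcal{F}^D$; equivalently, after reducing the $S_2$-statement for $\mathcal{G}$ at $x\in X$ to a local-cohomology vanishing on $X'$ over $\operatorname{Spec}\O_{X,x}$, one base-changes along the faithfully flat $\O_{X,x}\to\prod_{y\in p^{-1}(x)}\O_{Y,y}$ and reads the vanishing off from the weak rationality of $p^*\mathcal{F}$ near each $y$—the comparison of resolutions being exactly what bridges a resolution of $X$ with one of $Y$ there.

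The main obstacle is this last comparison. Since $f$ and $g$ do not fit into a commutative square, the effective divisors coming from ramification and from the failure of reflexivity under pullback must be tracked simultaneously, and one must verify that they do not spoil the depth (equivalently, local-cohomology) bookkeeping for $\mathcal{G}$—perhaps after a judicious further blow-up, or by choosing $W$ so that the relevant line bundle is $b$-nef. This is precisely the point where the hypothesis $(KV_{\dim X})$ is used in an essential way: it collapses $Rf_*(f^*\mathcal{F})^D$ to the single sheaf $\mathcal{G}$, so that the $S_2$-property of $\mathcal{G}$ can be tested purely through data pulled back from $Y$. The remaining ingredients—Cohen--Macaulay descent along finite morphisms, Grothendieck duality, and Grauert--Riemenschneider-type vanishing—are standard; the work lies in assembling them using no hypothesis beyond $(KV_{\dim X})$ and the rationality of $p^*\mathcal{F}$.
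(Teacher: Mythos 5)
Your first two steps are fine and in fact land exactly on the paper's endgame: Cohen--Macaulayness of $\mathcal{F}$ descends along the finite flat $p$ (the paper uses faithful flatness, you use the trace splitting of $\O_X\to p_*\O_Y$; both work in characteristic $0$), and, since $(KV_{\dim X})$ collapses $Rf_*(f^*\mathcal{F})^D$ to $\mathcal{G}:=f_*(f^*\mathcal{F})^D$, weak rationality reduces via Grothendieck duality to showing $\mathcal{G}\cong\mathcal{F}^D$, i.e.\ that $\mathcal{H}^0(\theta_{\mathcal{F},f})$ is an isomorphism. The genuine gap is that the step which must transport the hypothesis on $p^*\mathcal{F}$ down to $X$ is never carried out. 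Your plan --- compare $f:X'\to X$ with an independently chosen weakly rational resolution $g:Y'\to Y$ through a smooth roof $W$ with $a:W\to X'$, $b:W\to Y'$, tracking ramification divisors and the discrepancy divisors between $(e^*\mathcal{F})^{DD}$ and the pullbacks $a^*(f^*\mathcal{F})^{DD}$, $b^*(g^*p^*\mathcal{F})^{DD}$, plus Grauert--Riemenschneider vanishing along $b$ --- is only announced, and you yourself flag it as ``the main obstacle''. No map $\mathcal{F}^D\to\mathcal{G}$ (equivalently, no left inverse of $\mathcal{F}\to Rf_*(f^*\mathcal{F})^{DD}$) is ever constructed; the effective divisors by which double dual fails to commute with pullback have no a priori reason to point in the direction your vanishing arguments would need; and the suggestion to base-change along the faithfully flat $\O_{X,x}\to\prod_{y\in p^{-1}(x)}\O_{Y,y}$ cannot simply be ``read off'' from weak rationality of $p^*\mathcal{F}$, because $X'\times_X Y$ is not a resolution of $Y$ --- that is the missing comparison restated, not a proof of it.

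The idea you are missing (the paper follows Kov\'acs' characterization of rational singularities, Theorem 1 in \cite{Kov00}) is to choose the two resolutions compatibly rather than independently: fix $f:X'\to X$ which is $(KV_{\dim X})$ about $\mathcal{F}$, and take a resolution $Y'$ of (the main component of) $X'\times_X Y$, so that $f':Y'\to Y$ is proper birational, $p':Y'\to X'$ is proper, and the square over $p$ commutes. Then the natural map $\mathcal{F}\to R(p\circ f')_*((p\circ f')^*\mathcal{F})^{DD}$ factors both through $\beta:\mathcal{F}\to Rf_*(f^*\mathcal{F})^{DD}$ and through $\mathcal{F}\to p_*p^*\mathcal{F}$, where the second arrow $p_*p^*\mathcal{F}\to R(p\circ f')_*((p\circ f')^*\mathcal{F})^{DD}$ is a quasi-isomorphism because $p^*\mathcal{F}$ has rational singularities, and $\mathcal{F}\to p_*p^*\mathcal{F}$ is split by the trace (Lemma 3.17 in \cite{KTT+22}). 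Composing gives a left inverse of $\beta$; applying $R\mathcal{H}om_{\O_X}(-,\omega^{\bullet}_X)$ and using that $\mathcal{F}$ is CM and $f$ is $(KV_{\dim X})$ about $\mathcal{F}$ yields $\mathcal{F}^D\to f_*(f^*\mathcal{F})^D\to\mathcal{F}^D$ equal to the identity, which together with injectivity of $\mathcal{H}^0(\theta_{\mathcal{F},f})$ forces $f_*(f^*\mathcal{F})^D\cong\mathcal{F}^D$ --- precisely the statement your argument still owes. Working over one commutative square eliminates all the ramification and discrepancy bookkeeping that your roof $W$ would require.
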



For a normal variety $X$, one may want to measure how far a non-rational singularity is from being rational. For this purpose, the notion of non-rational locus is defined in \cite{AH} and \cite{Kovrational}. We also consider an analogous notion here.

\begin{defi}\thlabel{inon-RS}
Let $X$ be any normal variety, $\mathcal{F}$ any reflexive sheaf of rank $1$ on $X$, and $f:X'\to X$ a resolution. 
\begin{itemize}
    \item[(a)] We say that $f$ is \emph{$(KV_q)$ about $\mathcal{F}$} if the support of $R^if_*(f^*\mathcal{F})^D$ has codimension $\ge i+q+1$ for any $i\ge 1$. We call $\mathcal{F}$ $(KV_q)$ if one of the resolutions of $X$ is $(KV_q)$ about $\mathcal{F}$.
    \item[(b)] We say that $f$ is $(RS_q)$ about $\mathcal{F}$ if
\begin{itemize}
    \item[(i)] $f$ is $(KV_q)$ about $\mathcal{F}$, and
    \item[(ii)] for any point $x\in X$ with $\codim_X x\le q$, $\theta_{\mathcal{F},f}$ is a quasi-isomorphism after localizing $\theta_{\mathcal{F},f}$ at $x$.
\end{itemize}
We call $\mathcal{F}$ $(RS_q)$ if there is a resolution of $X$ which is $(RS_q)$ about $\mathcal{F}$.
\end{itemize}
\end{defi}

It is worth noting that $\mathcal{F}$ is $(KV_{\dim X})$ if and only if for some resolution $f:X'\to X$, $R^if_*(f^*\mathcal{F})^D=0$ for $i>0$.

\begin{rmk}
In \thref{inon-RS}, $\mathcal{F}$ is $(RS_{\dim X})$ if and only if $\mathcal{F}$ has rational singularities. Also in \thref{inon-RS}, condition (b) is equivalent to following: For any $i\ge 1$, $R^if_*(f^*\mathcal{F})^{DD}$ has the support of codimension $\ge q+1$ by almost the same argument in the proof of \thref{idefi1}.
\end{rmk}

Let us say that $\mathcal{F}$ is $(R_q)$ if there is an open subscheme $U\subseteq X$ of $X$ such that $\codim_X (X\setminus U)\ge q+1$, $U$ is smooth and $\mathcal{F}|_U$ is a vector bundle on $U$. The codimension of singular locus of $X$ determines the property of $(RS_q)$ of a reflexive sheaf $\mathcal{F}$ on $X$ of rank $1$.

\begin{lem}\thlabel{i??}
Let $X$ be any normal variety and $\mathcal{F}$ any reflexive sheaf of rank $1$ on $X$ with $(KV_q)$. If $X$ is $(R_q)$, then $\mathcal{F}$ is $(RS_q)$.
\end{lem}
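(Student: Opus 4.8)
The plan is to show that any resolution witnessing the $(KV_q)$ hypothesis for $\mathcal{F}$ is already $(RS_q)$ about $\mathcal{F}$. Since $\mathcal{F}$ is $(KV_q)$, fix a resolution $f:X'\to X$ that is $(KV_q)$ about $\mathcal{F}$. Then condition (i) in the definition of ``$f$ is $(RS_q)$ about $\mathcal{F}$'' (item (b) of \thref{inon-RS}) holds by the choice of $f$, so the whole content of the lemma is to verify condition (ii): that $\theta_{\mathcal{F},f}$ becomes a quasi-isomorphism after localizing at every point $x\in X$ with $\codim_X x\le q$.

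The key observation is that every such $x$ lies in the good locus. Let $U\subseteq X$ be the open subscheme provided by $(R_q)$, so that $\codim_X(X\setminus U)\ge q+1$, $U$ is smooth, and $\mathcal{F}|_U$ is a line bundle. If $\codim_X x\le q$ then $x\notin X\setminus U$, hence $x\in U$; in particular $f$ is an isomorphism over $U$ by our standing convention on resolutions. Since localizing $\theta_{\mathcal{F},f}$ at $x$ only sees a neighbourhood of $x$, it suffices to check that the restriction $\theta_{\mathcal{F},f}|_U$ is a quasi-isomorphism. But over $U$ one has $\omega^{\bullet}_X=\omega_X[\dim X]$, the sheaf $\mathcal{F}$ is locally free, and $f$ is an isomorphism: thus $Rf_*(f^*\mathcal{F})^D$ restricts to $\mathcal{F}^D$ concentrated in degree $0$, while $R\mathcal{H}om_{\O_X}(\mathcal{F},\omega^{\bullet}_X)[-\dim X]$ restricts to $R\mathcal{H}om_{\O_X}(\mathcal{F},\omega_X)=\mathcal{H}om_{\O_X}(\mathcal{F},\omega_X)=\mathcal{F}^D$, again in degree $0$; and a glance at the construction of $\theta_{\mathcal{F},f}$ in \thref{idefi1} shows that over $U$ the map is the canonical identification under these isomorphisms. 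Hence condition (ii) holds, so $f$ is $(RS_q)$ about $\mathcal{F}$ and $\mathcal{F}$ is $(RS_q)$. Alternatively, one can conclude via the remark following \thref{inon-RS}: since $f$ is an isomorphism away from $\Sing X\subseteq X\setminus U$, the sheaves $R^if_*(f^*\mathcal{F})^{DD}$ are supported on $\Sing X$ for $i\ge 1$ and so have support of codimension $\ge q+1$, which is precisely condition (ii).

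I do not expect a genuine obstacle: the lemma comes down to matching the two hypotheses to the two clauses of ``$(RS_q)$ about $f$'' — the $(KV_q)$ hypothesis gives condition (i) verbatim, while $(R_q)$ forces condition (ii) for every resolution, because that condition is tested only at points of codimension $\le q$, all of which lie in the smooth locus where $f$ is trivialised. The one step requiring a little care is the bookkeeping that identifies $\theta_{\mathcal{F},f}$ with the canonical isomorphism over the open set on which $f$ is an isomorphism — equivalently, that $R^if_*(f^*\mathcal{F})^{DD}$ is supported on $\Sing X$ for $i\ge1$ — and this is immediate from the construction in \thref{idefi1}.
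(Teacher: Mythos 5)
Your proof is correct, and its overall shape matches the paper's: both come down to checking, at the points $x$ with $\codim_X x\le q$, that the localized comparison map is a quasi-isomorphism (equivalently, that $R^if_*(f^*\mathcal{F})^{DD}$ has support of codimension $\ge q+1$ for $i\ge 1$), using that every such $x$ lies in the $(R_q)$ locus $U$. The difference is the input used at that point. You lean on the paper's standing convention that a resolution is an isomorphism outside $\Sing X$, so that over $U$ all higher direct images vanish for trivial reasons; this makes your argument essentially self-contained, and your direct verification that $\theta_{\mathcal{F},f}|_U$ is a quasi-isomorphism is sound. The paper instead uses only that $\mathcal{F}_x\cong\O_{X,x}$ for $x\in U$, so that $(R^if_*(f^*\mathcal{F})^{DD})_x\cong(R^if_*\O_{X'})_x$, and then invokes the vanishing theorem of Chatzistamatiou--R{\"u}lling (Theorem 1.1 in \cite{chatzistamatiou2}) to kill $R^if_*\O_{X'}$ at the regular point $x$. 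That route never uses the ``isomorphism outside the singular locus'' clause, so it applies verbatim to any proper birational $f$ with $X'$ smooth (for instance, further blow-ups with centers inside the smooth locus), and it works in arbitrary characteristic without appealing to the convention; your route buys elementarity, the paper's buys robustness in the choice of resolution. One small caveat about your alternative ending: the claim that the support condition on $R^if_*(f^*\mathcal{F})^{DD}$ ``is precisely condition (ii)'' is exactly the content of the remark following \thref{inon-RS}, which itself rests on the duality argument of \thref{idefi1}; your primary argument, which checks (ii) directly over $U$, does not need that equivalence and is the cleaner way to phrase the conclusion.
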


\begin{ex}\thlabel{ex2}
We can give examples of reflexive sheaves of rank $1$ with $(RS_q)$. Let $X$ be any normal variety over $\C$ and suppose that $\Delta$ is an effective Weil divisor such that $(X,\Delta)$ is log canonical. If $D$ is a $\Q$-Cartier Weil divisor on $X$, then $D$ is $(RS_{q+1})$, where $q$ is the codimension of the union of the non-klt centers of $(X,\Delta)$.

It would be interesting to know whether any closed subvariety defined by the points $x\in X$ in which $(\theta_{\mathcal{F},f})_x$ is not a quasi-isomorphism for some resolution $f:X'\to X$ is a non-klt center of $(X,\Delta)$ or not. Note that if $D=0$, then there is an affirmative answer. See Theorem 1.2 in \cite{AH}.
\end{ex}


We say that $\mathcal{F}$ is $(S_{q+1})$ if $H^i_x(X,\mathcal{F})=0$ for $i<\min\{q+1,\codim_X x\}$ and any point $x\in X$. 



\begin{defi}
Let $X$ be any normal variety and $\mathcal{F}$ any reflexive sheaf of rank $1$ on $X$. We say that $\mathcal{F}$ is $(B_{q+1})$ if $H^i_x(X,\mathcal{F})=0$ for any $\dim X-q<i<\dim X$ and any closed point $x\in X$.
\end{defi}

For any normal variety $X$, $X$ is CM if and only if $\omega_X$ is CM. This equivalence is established in Lemma 0AWS of \cite{Stacks}. Nonetheless, when $q<\dim X-1$, the statement that $X$ is $(S_{q+1})$ if and only if $\omega_X$ is $(S_{q+1})$ does not hold. It appears to us that the more appropriate property for $\omega_X$ might be $(B_{q+1})$ rather than $(S_{q+1})$. For further discussion on this topic, one can refer to Section 4.3 in \cite{Kollar}.

\begin{thm}\thlabel{imain}
Let $X$ be any normal projective variety and $\mathcal{F}$ any reflexive sheaf of rank $1$ on $X$, which is $(RS_q)$. Then the following are equivalent:
\begin{itemize}
    \item[(a)] $\mathcal{F}$ is $(S_{q+1})$.
    \item[(b)] For any resolution $f:X'\to X$ with $f$ $(RS_q)$ about $\mathcal{F}$, $R^if_*(f^*\mathcal{F})^{DD}=0$ for any $1\le i<q$.
    \item[(c)] $\mathcal{F}^D$ is $(B_{q+1})$.
\end{itemize}
\end{thm}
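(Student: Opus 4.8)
The plan is to fix a resolution $f\colon X'\to X$ that is $(RS_q)$ about $\mathcal{F}$, write $n=\dim X$, and reduce everything to the line bundle $\mathcal{L}:=(f^*\mathcal{F})^{DD}$ on $X'$ (it is a line bundle because $X'$ is smooth, hence locally factorial). One has $(f^*\mathcal{F})^D\cong\mathcal{L}^\vee\otimes\omega_{X'}$, while $f_*\mathcal{L}\cong\mathcal{F}$ and there is a natural inclusion $f_*(f^*\mathcal{F})^D\hookrightarrow\mathcal{F}^D$ (both sides are torsion-free of rank $1$, agree in codimension $1$, and $\mathcal{F}^D$ is reflexive). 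Two tools will be used throughout. First, for every line bundle $\mathcal{M}$ on $X'$ and every $x\in X$ with $c:=\codim_X x$, the duality
\[
H^{k}_{f^{-1}(x)}(X'_x,\mathcal{M})^{\vee}\;\cong\;\bigl(R^{\,c-k}f_*(\mathcal{M}^{\vee}\otimes\omega_{X'})\bigr)_{x},
\]
obtained by composing Grothendieck duality for $f$ (which replaces $\mathcal{M}$ by $\mathcal{M}^\vee\otimes\omega_{X'}$) with local duality over $\O_{X,x}$, keeping track of the normalization shift $(\omega^\bullet_X)_x\cong\omega^\bullet_{\O_{X,x}}[\,n-c\,]$. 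Second, the spectral sequence $E_2^{i,j}=H^i_x(X,R^jf_*\mathcal{M})\Rightarrow H^{i+j}_{f^{-1}(x)}(X'_x,\mathcal{M})$ arising from $R\Gamma_{f^{-1}(x)}=R\Gamma_x\circ Rf_*$. Applying the first tool to $\mathcal{M}=\mathcal{L}$ and inserting the $(KV_q)$ hypothesis (support of $R^jf_*(f^*\mathcal{F})^D$ has codimension $\ge j+q+1$) gives the base vanishing $H^{k}_{f^{-1}(x)}(X'_x,\mathcal{L})=0$ for $0\le k\le\min\{q,c-1\}$.

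For $(b)\Rightarrow(a)$: given (b), in the spectral sequence for $\mathcal{M}=\mathcal{L}$ all terms $E_2^{i,j}$ with $1\le j\le q-1$ vanish, and for $j\ge q$ the sheaf $R^jf_*(f^*\mathcal{F})^{DD}$ is supported in codimension $\ge q+1$ by $(RS_q)$; a short check then shows no differential enters or leaves $E_r^{i,0}$ once $i\le\min\{q,c-1\}$, so $H^i_x(X,\mathcal{F})=E_\infty^{i,0}$ embeds in $H^i_{f^{-1}(x)}(X'_x,\mathcal{L})=0$ in that range, which is precisely $(S_{q+1})$. For $(a)\Rightarrow(b)$: prove $R^if_*(f^*\mathcal{F})^{DD}=0$ for $1\le i\le q-1$ by induction on $i$. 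If some such sheaf is nonzero, take $x$ a generic point of a component of its support, so $c=\codim_X x\ge q+1$ by $(RS_q)$; then $H^{i}_{f^{-1}(x)}(X'_x,\mathcal{L})=0$ by the base vanishing, $H^{\le q}_x(X,\mathcal{F})=0$ by $(S_{q+1})$, and $R^{j}f_*(f^*\mathcal{F})^{DD}=0$ for $j<i$ by induction; hence every differential out of $E_r^{0,i}$ lands in a term that is either inductively zero or equal to $H^{i+1}_x(X,\mathcal{F})=0$, forcing $\bigl(R^if_*(f^*\mathcal{F})^{DD}\bigr)_x=H^0_x\bigl(X,R^if_*(f^*\mathcal{F})^{DD}\bigr)=E_\infty^{0,i}$ to be a quotient of $H^{i}_{f^{-1}(x)}(X'_x,\mathcal{L})=0$, contradicting the choice of $x$.

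For $(b)\Leftrightarrow(c)$: since $\theta_{\mathcal{F},f}$ is a quasi-isomorphism — hence an isomorphism on $\mathcal{H}^0$, i.e.\ $\bigl(f_*(f^*\mathcal{F})^D\bigr)_x\xrightarrow{\sim}(\mathcal{F}^D)_x$ — at every $x$ with $\codim_X x\le q$, the cokernel of $f_*(f^*\mathcal{F})^D\hookrightarrow\mathcal{F}^D$ is supported in codimension $\ge q+1$, so $H^i_x\bigl(X,f_*(f^*\mathcal{F})^D\bigr)\cong H^i_x(X,\mathcal{F}^D)$ for a closed point $x$ and $n-q<i<n$. Now run the spectral sequence for $\mathcal{M}=(f^*\mathcal{F})^D=\mathcal{L}^\vee\otimes\omega_{X'}$ at a closed point $x$ (so $c=n$): by $(KV_q)$, every $E_2^{i,j}$ with $j\ge1$ and $i+j>n-q-1$ vanishes, so for $n-q<i<n$ the edge map identifies $H^i_x\bigl(X,f_*(f^*\mathcal{F})^D\bigr)=E_\infty^{i,0}$ with $H^i_{f^{-1}(x)}(X'_x,(f^*\mathcal{F})^D)$; and the first tool with $\mathcal{M}=(f^*\mathcal{F})^D$ reads $H^k_{f^{-1}(x)}(X'_x,(f^*\mathcal{F})^D)^\vee\cong\bigl(R^{\,n-k}f_*(f^*\mathcal{F})^{DD}\bigr)_x$. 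Combining these three isomorphisms: for a closed point $x$, $H^i_x(X,\mathcal{F}^D)=0$ for all $n-q<i<n$ if and only if $\bigl(R^jf_*(f^*\mathcal{F})^{DD}\bigr)_x=0$ for all $1\le j\le q-1$; letting $x$ range over closed points turns this into the equivalence of (c) and (b).

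The step I expect to be most delicate is setting up the duality in the first tool with the correct shifts — in particular the passage from $(\omega^\bullet_X)_x$ to the normalized dualizing complex of $\O_{X,x}$ at non-closed points — and, relatedly, organizing the argument entirely in terms of the sheaves $R^jf_*(f^*\mathcal{F})^{DD}$ and $R^jf_*(f^*\mathcal{F})^D$ themselves (not their sections or cohomology) so that the codimension bounds supplied by $(RS_q)$ and $(KV_q)$ can be fed into the spectral sequence. A secondary point requiring care is that $f_*(f^*\mathcal{F})^D$ and $\mathcal{F}^D$ are genuinely different sheaves in general; the argument works only because their discrepancy is confined to codimension $\ge q+1$, which is exactly what $(RS_q)$ provides.
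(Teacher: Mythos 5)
Your proposal is correct and follows essentially the same route as the paper: the duality of \thref{warmup1} (your ``first tool''), the local-cohomology Leray spectral sequence for $(f^*\mathcal{F})^{DD}$ with an induction for $(a)\Rightarrow(b)$ and edge maps for $(b)\Rightarrow(a)$, and for the $(B_{q+1})$ equivalence the cokernel of $f_*(f^*\mathcal{F})^D\hookrightarrow\mathcal{F}^D$ being supported in codimension $\ge q+1$ together with the spectral sequence for $(f^*\mathcal{F})^D$. The only cosmetic differences are that you phrase the duality as a Matlis-dual isomorphism rather than a vanishing criterion (harmless, since you only use vanishing) and organize the last step as $(b)\Leftrightarrow(c)$ rather than $(a)\Leftrightarrow(c)$, which is what the paper's argument amounts to anyway.
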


Note that the proof of \thref{imain} is inspired by the proof of Lemma 3.3 in \cite{Kov}.

Using the notions of $(B_{q+1})$ and $(RS_q)$, we can prove the following theorem about the CM-ness of normal $\Q$-factorial varieties.

\begin{coro}\thlabel{imaincoro}
Let $X$ be any normal $\Q$-factorial variety with $(R_q)$. Assume that any Weil divisor on $X$ is $(KV_q)$, and $(S_{q+1})$. If $q\ge \ceil{\frac{\dim X+1}{2}}$, then any Weil divisor on $X$ is CM. In particular, $X$ is CM itself.
\end{coro}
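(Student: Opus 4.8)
The plan is to show that every rank-$1$ reflexive sheaf $\mathcal{F}=\O_X(D)$ coming from a Weil divisor $D$ on $X$ satisfies both $(S_{q+1})$ and $(B_{q+1})$, and then to observe that these two properties together force $\mathcal{F}$ to be Cohen--Macaulay as soon as $q\ge\ceil{\frac{\dim X+1}{2}}$. The crucial point is that the hypotheses of the corollary are symmetric under the involution $\mathcal{F}\mapsto\mathcal{F}^D$ on rank-$1$ reflexive sheaves (which sends $\O_X(D)$ to $\O_X(K_X-D)$), so that \thref{imain} --- which turns $(S_{q+1})$ of $\mathcal{F}$ into $(B_{q+1})$ of $\mathcal{F}^D$ --- can be fed the dual sheaf and made to return $(B_{q+1})$ for $\mathcal{F}$ itself.

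Concretely, I would fix a Weil divisor $D$, set $\mathcal{F}=\O_X(D)$ and $n=\dim X$, and, since Cohen--Macaulayness is local and can be checked at closed points, fix a closed point $x\in X$ (so $\dim\O_{X,x}=\codim_X x=n$). One may assume $n\ge 2$ and $q\le n-1$, since otherwise the hypothesis that $\mathcal{F}$ is $(S_{q+1})$ already gives $H^i_x(X,\mathcal{F})=0$ for all $i<\min\{q+1,n\}=n$, i.e. $\mathcal{F}_x$ is Cohen--Macaulay. By hypothesis $\mathcal{F}$ is $(S_{q+1})$, hence $H^i_x(X,\mathcal{F})=0$ for $0\le i\le q$; to obtain the complementary vanishings I would show $\mathcal{F}$ is $(B_{q+1})$. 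For this, note that $\mathcal{F}^D=\mathcal{H}om_{\O_X}(\mathcal{F},\omega_X)$ is again reflexive of rank $1$ on the normal variety $X$, hence isomorphic to $\O_X(D')$ for a Weil divisor $D'\sim K_X-D$; applying the hypotheses to $D'$, the sheaf $\mathcal{F}^D$ is $(KV_q)$ and $(S_{q+1})$, and since $X$ is $(R_q)$, \thref{i??} shows that $\mathcal{F}^D$ is $(RS_q)$. Now I would apply the equivalence (a)$\Leftrightarrow$(c) of \thref{imain} to the reflexive rank-$1$ sheaf $\mathcal{F}^D$: it is $(RS_q)$ and satisfies (a), so $(\mathcal{F}^D)^D$ is $(B_{q+1})$; and $(\mathcal{F}^D)^D=\mathcal{F}^{DD}=\mathcal{F}$ because $\mathcal{F}$ is reflexive (see \thref{idefi2} and the remark following it). Hence $H^i_x(X,\mathcal{F})=0$ for $n-q+1\le i\le n-1$.

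The last step would be to combine the two ranges. We now have $H^i_x(X,\mathcal{F})=0$ for $0\le i\le q$ and for $n-q+1\le i\le n-1$, while $H^i_x(X,\mathcal{F})=0$ automatically for $i>n$. Since $q\ge\ceil{\frac{n+1}{2}}$ forces $2q\ge n+1$, hence $n-q+1\le q$, the two ranges together cover $\{0,1,\dots,n-1\}$; therefore $\depth\mathcal{F}_x\ge n=\dim\mathcal{F}_x$, so $\mathcal{F}_x$ is Cohen--Macaulay. As $D$ was an arbitrary Weil divisor, every Weil divisor on $X$ is Cohen--Macaulay, and the case $D=0$ gives in particular that $X$ is Cohen--Macaulay.

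The part I expect to be most delicate is the invocation of \thref{imain}, which is stated for \emph{projective} $X$ whereas the corollary allows $X$ to be merely quasi-projective. Since $(S_{q+1})$, $(B_{q+1})$, $(RS_q)$ and Cohen--Macaulayness are all Zariski-local conditions, I would bridge this gap either by replacing $X$ with an open subset of a normal projective compactification without changing the local cohomology near $x$, or by checking that the implication (a)$\Rightarrow$(c) in the proof of \thref{imain} is purely local in nature. The remaining verifications --- that $\mathcal{F}^D$ is reflexive of rank $1$, is isomorphic to some $\O_X(D')$, and satisfies $\mathcal{F}^{DD}\cong\mathcal{F}$ --- are routine facts about divisorial sheaves on a normal variety and should present no difficulty.
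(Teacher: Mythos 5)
Your proposal is correct and follows essentially the same route as the paper: use \thref{i??} (with the $(KV_q)$ and $(R_q)$ hypotheses) to get $(RS_q)$, apply \thref{imain} to the dual sheaf $\O_X(K_X-D)$ so that its double dual $\O_X(D)$ is $(B_{q+1})$, and combine the $(S_{q+1})$ and $(B_{q+1})$ vanishing ranges via $2q\ge \dim X+1$ to conclude CM-ness. Your caution about \thref{imain} being stated for projective $X$ is well placed --- the paper applies it without comment --- but the implication you need is purely local in nature, so the argument goes through as you suggest.
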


\begin{rmk}
\thref{imaincoro} proves that in the setting of $\mathrm{char}\,k=0$, for any factorial variety $X$, if $X$ is $(RS_{\ceil{\frac{\dim X+1}{2}}})$ and $(S_{\ceil{\frac{\dim X+3}{2}}})$, then $X$ is CM. Indeed, any reflexive sheaf on $X$ of rank $1$ is a line bundle, and thus the Grauert-Riemenschneider vanishing theorem ensures $R^if_*(f^*\mathcal{F})^D=0$ for any reflexive sheaf $\mathcal{F}$ on $X$ of rank $1$, any resolution $f:X'\to X$, and any $i\ge 1$. Thus, \thref{imaincoro} gives us the claim.

One may think that Theorem 1.6 in \cite{HO74} is similar to \thref{imaincoro}. Hence, if we assume $X$ is $(R_q)$, then one can believe that there is a simple proof of \thref{imaincoro} using the local duality only.
\end{rmk}

We recall the notion of a \emph{$q$-birational morphism}, which was originally defined in \cite{trash}.

\begin{defi}\thlabel{ibirational}
Let $X,X'$ be any normal varieties over a characteristic $0$ field, and $f:X'\to X$ any proper birational morphism.
\begin{itemize}
\item[(a)] The \emph{center} of $f$ is the reduced closed subscheme $C$ of $X$ which is the image of the exceptional locus of $f$.
\item[(b)] We say $f$ is a \emph{$q$-birational morphism} if the exceptional locus has codimension $1$ and the center of $f$ has codimension $\ge q+1$.
\end{itemize}
\end{defi}

\begin{rmk}
For any $q$-birational morphism $f:X'\to X$ between normal varieties, if $X'$ is smooth, then $X$ is $(R_q)$.
\end{rmk}

The argument of the proof of \thref{imain} can be used to generalize Theorem 3.4 in \cite{trash} as follows:

\begin{thm}\thlabel{iyouaresecondapplication}
Let $X,X'$ be any normal varieties, $X'$ smooth and $f:X'\to X$ any $q$-birational morphism. Suppose that $D$ is any anti $f$-nef Cartier divisor on $X'$ such that $f_*D$ is $\Q$-Cartier and $(S_{q+1})$. Then $R^if_*\O_{X'}(D)=0$ for $1\le i<q$.
\end{thm}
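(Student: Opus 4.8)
The plan is to mimic the proof of \thref{imain}, treating the $q$-birational morphism $f$ as a kind of ``partial resolution'' and extracting from it the vanishing of $R^if_*\O_{X'}(D)$ in the stated range. First I would reduce to a local statement: working over $X=\Spec R$ with $R$ local and $x$ the closed point, it suffices to show $\bigl(R^if_*\O_{X'}(D)\bigr)_x=0$ for $1\le i<q$, and since the exceptional locus of $f$ has codimension $1$ while the center $C=f(\Exc(f))$ has codimension $\ge q+1$, the sheaves $R^if_*\O_{X'}(D)$ for $i\ge 1$ are already supported on $C$; so I may assume $x\in C$ and, via the theorem on formal functions or rather the local cohomology spectral sequence, translate the question into a statement about $H^i_{f^{-1}(x)}(X',\O_{X'}(D))$.

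Next I would set up the duality apparatus exactly as in \thref{idefi1}. Put $L:=f_*D$, a $\Q$-Cartier Weil divisor on $X$, so $\O_X(L)$ is a reflexive sheaf of rank $1$; since $X$ is $(R_q)$ (because $f$ is $q$-birational with $X'$ smooth) and $\O_{X'}(D)$ is a line bundle, $f$ is $(KV_q)$ about $\O_X(L)$ by Grauert--Riemenschneider-type vanishing on the smooth model combined with the codimension estimate on $C$; in fact the anti-$f$-nef hypothesis on $D$ is precisely what powers a relative Kawamata--Viehweg/Grauert--Riemenschneider vanishing for $Rf_*\bigl(\O_{X'}(D)\otimes\omega_{X'/X}\bigr)$, giving control of $R^if_*(f^*\O_X(L))^D$. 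I would then invoke the natural map $\theta_{\O_X(L),f}\colon Rf_*(f^*\O_X(L))^D\to R\mathcal{H}om_{\O_X}(\O_X(L),\omega_X^\bullet)[-\dim X]$ and the equivalence of conditions (a)--(c) in \thref{idefi1}, localized over points of codimension $\le q$: because $f$ is $q$-birational, $f$ is an isomorphism over every such point, so $\theta$ is automatically a quasi-isomorphism there, i.e. $f$ is $(RS_q)$ about $\O_X(L)$. Now the hypothesis that $L=f_*D$ is $(S_{q+1})$ feeds into the implication (a)$\Rightarrow$(b) of \thref{imain}, yielding $R^if_*(f^*\O_X(L))^{DD}=0$ for $1\le i<q$.

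The final step is to transfer this vanishing from $(f^*\O_X(L))^{DD}$ back to $\O_{X'}(D)$ itself. Here the point is that $D$ and $f^*(f_*D)$ differ only by a divisor supported on $\Exc(f)$, which has pure codimension $1$, so $\O_{X'}(D)$ and $(f^*\O_X(L))^{DD}=\O_{X'}(f^*L)$ agree outside a codimension-$2$ set; more precisely I would compare $D$ with the round-up or round-down of $f^*L$ and use that $D$ is $f$-nef together with negativity of the exceptional divisors to pin down $\O_{X'}(D)$ up to an $f$-exceptional line bundle, then run a short exact sequence / dévissage argument in the derived category to conclude $R^if_*\O_{X'}(D)=0$ in the same range $1\le i<q$ once the corresponding statement for $\O_{X'}(f^*L)$ is known. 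I expect the main obstacle to be exactly this last comparison: making the passage between $\O_{X'}(D)$, $f^*(f_*D)$, and their reflexive hulls completely rigorous when $f_*D$ is merely $\Q$-Cartier (so $f^*L$ is a $\Q$-divisor and one must choose an integral representative carefully), and checking that the anti-$f$-nef condition on $D$ is compatible with whatever rounding is forced on $f^*L$ — this is the technical heart where the argument of Theorem 3.4 in \cite{trash} has to be adapted rather than quoted.
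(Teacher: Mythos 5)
Your set-up is the same as the paper's: choose $L$ with $\O_{X'}(L)=(f^*\O_X(f_*D))^{DD}$, produce an effective $f$-exceptional $E$ via \thref{areyouwrong}, use anti $f$-nefness and the negativity lemma to get an inclusion $\O_{X'}(L+\floor{E})\hookrightarrow\O_{X'}(D)$, and get vanishing for $\O_{X'}(L+\floor{E})$ and its dual from relative Kawamata--Viehweg. But the step you leave open --- transferring vanishing from $(f^*\O_X(f_*D))^{DD}$, i.e.\ from $\O_{X'}(L+\floor{E})$, to $\O_{X'}(D)$ --- is the actual content of the theorem, and the ``short exact sequence / d\'evissage'' you gesture at cannot deliver it. The quotient of that inclusion is supported on the $f$-exceptional divisor, of codimension one in $X'$; the long exact sequence together with $R^if_*\O_{X'}(L+\floor{E})=0$ only shows that $R^if_*\O_{X'}(D)$ injects into $R^if_*$ of the quotient, hence is supported on the center of $f$ --- which is already obvious from $q$-birationality --- and no codimension-of-support argument upgrades this to vanishing. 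Note that the paper's d\'evissage lemma \thref{twospectral} runs only in the opposite direction: vanishing for the larger sheaf implies it for the subsheaf. The missing idea is \thref{firstapplicationdead}, explicitly a partial converse of \thref{twospectral}, obtained by dualizing \emph{before} pushing forward: dualize $\varphi:\O_{X'}(L+\floor{E})\to\O_{X'}(D)$ to get an injection $(\O_{X'}(D))^D\to(\O_{X'}(L+\floor{E}))^D$, push forward so that the cokernel of $f_*\varphi^D$ is supported on the center, hence in codimension $\ge q+1$, and then combine the $(S_{q+1})$ hypothesis on $\O_X(f_*D)$, the vanishing $R^if_*(\O_{X'}(L+\floor{E}))^D=0$ from relative Kawamata--Viehweg, the local-cohomology spectral sequences, and the duality \thref{warmup1} to translate back into $R^if_*\O_{X'}(D)=0$ for $1\le i<q$. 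This dualize-then-push-forward mechanism is absent from your outline, and without it the proof does not close.

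A secondary point: routing through \thref{imain}(a)$\Rightarrow$(b) is both not quite available and aimed at the wrong sheaf. \thref{imain} assumes $X$ projective and $f$ a resolution in the paper's sense, neither of which is given here; and in any case the intermediate vanishing it would produce is obtained in the paper more strongly and more cheaply, namely $R^if_*\O_{X'}(L+\floor{E})=0$ for all $i\ge 1$ directly from relative Kawamata--Viehweg plus \thref{twospectral}, with no use of $(S_{q+1})$. The hypothesis that $f_*D$ is $(S_{q+1})$ is consumed entirely on the dual side, inside the argument of \thref{firstapplicationdead}, which is exactly the part your proposal does not supply.
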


The rest of the paper is organized as follows. We begin Section 2 by defining some basic notions and by stating and proving basic theorems. Section 3 is devoted to proving basic results about reflexive sheaves. In Section 4, we define (weakly) rational singularities and prove some basic theorems. In Section 5, we define the notion of $(B_{q+1})$ and prove the main theorem about that notion. Section 6 is devoted to defining $q$-birational morphisms and proving the main theorems about $q$-birational morphisms.

\begin{ack}
The author thanks his advisor Sung Rak Choi for his comments, questions, and discussions. He is grateful for his encouragement and support. The author is grateful to Fabio Bernasconi and S{\'a}ndor Kov{\'a}cs for their helpful comments on earlier versions of this paper. The author thanks the referee for careful reading, comments, and corrections of the previous version of this paper. We would like to acknowledge the assistance of ChatGPT in polishing the wording.
\end{ack}

\section{Preliminaries}
The section is devoted to collecting basic definitions and facts used in the paper.

\begin{notation}
Given two normal varieties $X,X'$, and a proper morphism $f:X'\to X$, and a (not necessarily closed) point $x\in X$, we use the following notation:

\begin{itemize}
    \item $X'_x:=X'\times_X \Spec \O_{X,x}$,
    \item $\dim x$ denotes the dimension of the closure of $x$,
    \item $\codim_X x:=\dim X-\dim x$.
\end{itemize}

For any coherent sheaves $\mathcal{F},\mathcal{F}'$ on $X,X'$ respectively and the inclusion $\jmath:X'_x\to X'$, we denote 

\begin{itemize}
    \item $\mathcal{F}'_{x}:=j^*\mathcal{F}'$,
    \item $\O_{X',x}:=(\O_{X'})_x$,
    $\omega_{X',x}:=(\omega_{X'})_x$.
\end{itemize}

Note that if $f:X'\to X$ is a resolution of $X$, then $X'_x$ is a regular scheme.

For simplicity, if $i\ge 0$, a point $x\in X$ and a coherent sheaf $\mathcal{F}$ on $X$ are given, set
$$H^i_x(X,\mathcal{F}):=H^i_x(X_x,\mathcal{F}_x).$$
Also, under the same conditions as above, if a resolution $f:X'\to X$ and a coherent sheaf $\mathcal{F}'$ on $X'$ are given, let
$$ H^i_{f^{-1}(x)}(X',\mathcal{F}'):=H^i_{f^{-1}(x)}(X'_x,\mathcal{F}'_x).$$
\end{notation}

\begin{defi}
Let $X$ be any normal variety and $\mathcal{F}$ any torsion-free sheaf on $X$.
\begin{itemize}
    \item[(a)] We say that $\mathcal{F}$ is $(R_q)$ if there is an open subscheme $U\subseteq X$ such that $\codim_X (X\setminus U)\ge q+1$, $U$ is smooth and $\mathcal{F}|_U$ is a vector bundle on $U$.
    \item[(b)] We say that $X$ is $(R_q)$ if $\O_X$ is $(R_q)$.
    \item[(c)] We say that $\mathcal{F}$ is $(S_{q+1})$ if $H^i_x(X,\mathcal{F})=0$ for $i<\min\{q+1,\codim_X x\}$ and any point $x\in X$.
    \item[(d)] We say that $\mathcal{F}$ is \emph{Cohen-Macaulay (CM)} if $\mathcal{F}$ is $(S_{\dim X})$.
    \item[(e)] We say that $X$ is \emph{Cohen-Macaulay (CM)} if $\O_X$ is CM.
    \item[(f)] We say that $\mathcal{F}$ is \emph{reflexive} if $\mathcal{F}$ is $(S_2)$.
\end{itemize}
\end{defi}

\begin{rmk}
If $X$ is any normal variety with $(R_q)$, then any reflexive sheaf of rank $1$ on $X$ is also $(R_q)$. An analogue for $(S_{q+1})$ does not hold. Indeed, let $X:=\Spec k[x,y,z,w]/(xy-zw)$. Then $X$ is CM but not any reflexive sheaf of rank $1$ on $X$ is CM. See 3.15 in \cite{Kollar2}.
\end{rmk}

We will use derived category machinery in the paper, especially in Section 4 and Section 5. Hence, it is worth stating the \emph{local duality} and the \emph{Grothendieck duality}. For a separated scheme $X$ over $k$, let us define the notion of \emph{normalized dualizing complex} $\omega^{\bullet}_X$ of $X$ by $f^!\omega_{\Spec k}$, where $f:X\to \Spec k$ is the structure morphism.

\begin{thm}[See Lemma 0A85 in \cite{Stacks}]\thlabel{localduality}
Let $X$ be any variety, $x\in X$ any closed point and $E$ the injective hull of the residue field of $\O_{X,x}$. Write $\m_{X,x}$ the maximal ideal of $\O_{X,x}$, and $Z=V$. Then
$$ R\Hom_{\O_{X,x}}(K,\omega^{\bullet}_X)^{\wedge}_x\cong R\Hom_{\O_{X,x}}(R\Gamma_Z(K),E[0])$$
for any $i$ and $K\in D(X)$, where $(-)^{\wedge}_x$ denotes the derived completion along $\m_{X,x}$, and $D(X)$ denotes the derived category of bounded complexes of coherent sheaves on $X$.
\end{thm}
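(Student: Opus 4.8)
The plan is to pass to the local ring $A:=\O_{X,x}$ and reduce the statement to two classical inputs, after which it is pure formalism. Write $\m:=\m_{X,x}$. After localizing, $\omega^{\bullet}_X$ becomes a normalized dualizing complex $\omega^{\bullet}_A$ of $A$: for $X$ of finite type over $k$ the localization of $\omega^{\bullet}_X$ at any point is a dualizing complex, and at a closed point with residue field $k$ the normalization condition $R\Hom_A(\kappa(x),\omega^{\bullet}_A)\cong\kappa(x)[0]$ holds, by compatibility of $f^{!}$ with the closed immersion $\{x\}\hookrightarrow X$. The subset $Z$ becomes the closed point, so $R\Gamma_Z$ becomes $R\Gamma_{\m}$, and $K\in D^{b}_{\mathrm{coh}}(A)$. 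The first input I would isolate is $R\Gamma_{\m}(\omega^{\bullet}_A)\cong E[0]$, i.e.\ classical Grothendieck local duality for the closed point (a restatement of ``normalized''); the second is the standard compatibility that, for a pseudo-coherent complex $L$ and a bounded-below complex $M$, local cohomology commutes with $R\Hom$ out of $L$: $R\Gamma_{\m}R\Hom_A(L,M)\cong R\Hom_A(L,R\Gamma_{\m}(M))$.

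Granting these, the argument runs as follows. I would set $L:=R\Hom_A(K,\omega^{\bullet}_A)$; since $\omega^{\bullet}_A$ is dualizing, $L\in D^{b}_{\mathrm{coh}}(A)$ and biduality gives $K\cong R\Hom_A(L,\omega^{\bullet}_A)$. Hence
$$ R\Gamma_{\m}(K)\;\cong\;R\Gamma_{\m}R\Hom_A(L,\omega^{\bullet}_A)\;\cong\;R\Hom_A\bigl(L,R\Gamma_{\m}(\omega^{\bullet}_A)\bigr)\;\cong\;R\Hom_A(L,E), $$
the last complex being the Matlis dual $L^{\vee}:=R\Hom_A(L,E)$ (which equals the naive $\Hom$-complex since $E$ is injective). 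Applying $R\Hom_A(-,E)$ once more,
$$ R\Hom_A\bigl(R\Gamma_{\m}(K),E[0]\bigr)\;\cong\;R\Hom_A(L^{\vee},E)\;=\;(L^{\vee})^{\vee}\;\cong\;L\otimes_A\widehat{A}, $$
where the last isomorphism is Matlis biduality for the bounded coherent complex $L$, proved by induction on the number of nonzero cohomology modules of $L$ using $M^{\vee\vee}\cong\widehat{M}$ for finite $M$, and where $L\otimes_A\widehat{A}$ is exactly the derived $\m$-adic completion of $L$ since $\widehat{A}$ is flat over $A$ and $L$ is pseudo-coherent. As $L=R\Hom_A(K,\omega^{\bullet}_A)$, this says $R\Hom_A(R\Gamma_{\m}(K),E[0])\cong R\Hom_A(K,\omega^{\bullet}_A)^{\wedge}$, which is the claim.

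The only real obstacle is the pair of foundational inputs above, together with the boundedness bookkeeping that keeps every complex in sight in $D^{b}$ — this rests on $R\Gamma_{\m}$ having finite cohomological dimension, which also legitimizes the induction in the last step. These are exactly the facts assembled in \cite{Stacks}, which is why the result is cited rather than reproved. If one prefers to avoid the $R\Hom$/local-cohomology commutation lemma, one can instead route through the extended \v{C}ech complex: $R\Gamma_{\m}(K)\cong R\Gamma_{\m}(A)\otimes^{\mathbf{L}}_{A}K$, so tensor--hom adjunction gives $R\Hom_A(R\Gamma_{\m}(K),\omega^{\bullet}_A)\cong R\Hom_A(K,R\Hom_A(R\Gamma_{\m}(A),\omega^{\bullet}_A))$, whose inner term is the derived completion of $\omega^{\bullet}_A$ by Greenlees--May duality; combining this with $R\Hom_A(R\Gamma_{\m}(K),E)\cong R\Hom_A(R\Gamma_{\m}(K),\omega^{\bullet}_A)$ — valid because $R\Gamma_{\m}(K)$ is derived $\m$-torsion and $R\Gamma_{\m}(\omega^{\bullet}_A)=E$ — and with the compatibility of derived completion with $R\Hom_A(K,-)$ for pseudo-coherent $K$, one lands on the same conclusion.
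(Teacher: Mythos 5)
This statement is quoted background: the paper gives no proof of \thref{localduality} at all, it simply cites Lemma 0A85 of \cite{Stacks}, so there is no internal argument to compare against. Your write-up is a correct reconstruction of that cited proof. You correctly repair the statement's typos (read $Z=V(\m_{X,x})$, ignore the vestigial ``for any $i$''), and you correctly isolate the one geometric input, namely that $\omega^{\bullet}_{X,x}$ is a normalized dualizing complex of $\O_{X,x}$ at a closed point, so that $R\Gamma_{\m}(\omega^{\bullet}_{A})\cong E[0]$. Your main route (biduality $K\cong R\Hom_A(L,\omega^{\bullet}_A)$ with $L=R\Hom_A(K,\omega^{\bullet}_A)$, the commutation $R\Gamma_{\m}R\Hom_A(L,M)\cong R\Hom_A(L,R\Gamma_{\m}M)$ for $L$ pseudo-coherent and $M$ bounded below, then Matlis biduality $(L^{\vee})^{\vee}\cong L\otimes_A\widehat{A}\cong L^{\wedge}$) is a mild repackaging of the standard argument and is sound; the ``alternative route'' in your last paragraph, via $R\Gamma_{\m}(K)\cong K\otimes^{\mathbf{L}}R\Gamma_{\m}(A)$, the torsion adjunction, and Greenlees--May, is essentially verbatim the proof in \cite{Stacks}. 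The only caveat is bookkeeping you already acknowledge: all the commutation and completion steps need $K$ (hence $L$) pseudo-coherent and $\omega^{\bullet}_A$ bounded, which is exactly the hypothesis $K\in D^b_{\mathrm{coh}}$ in the statement, so nothing is missing.
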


\begin{thm}[See 0AU3, (4) in \cite{Stacks}]
Let $X,X'$ be any noetherian separated schemes over $k$, $f:X'\to X$ any proper morphism of varieties. For any $K\in D(X')$,
$$ R\mathcal{H}om_{\O_X}(Rf_*K,\omega^{\bullet}_{X}) \cong Rf_*R\mathcal{H}om_{\O_{X'}}(K,\omega^{\bullet}_{X'})\text{ in }D(X).$$
\end{thm}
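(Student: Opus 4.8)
The plan is to deduce this statement from the two pillars of Grothendieck duality for a proper morphism: the existence of a twisted inverse image $f^!$ right adjoint to $Rf_*$, and the sheafified duality isomorphism built out of the projection formula. First I would reduce to a statement that does not mention $\omega^\bullet_{X'}$ at all, by observing that the two normalized dualizing complexes are compatible along $f$. Writing $a_X:X\to\Spec k$ and $a_{X'}:X'\to\Spec k$ for the structure morphisms, we have $a_{X'}=a_X\circ f$, so the functoriality $(a_X\circ f)^!\cong f^!\circ a_X^!$ of the upper-shriek functor gives
$$ \omega^\bullet_{X'}=a_{X'}^!\,\omega_{\Spec k}\cong f^!a_X^!\,\omega_{\Spec k}=f^!\omega^\bullet_X. $$
Here $f^!$ agrees with the right adjoint of $Rf_*$ because $f$ is proper. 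Substituting this into the claim, it remains to prove the general duality isomorphism
$$ Rf_*R\mathcal{H}om_{\O_{X'}}(K,f^!L)\cong R\mathcal{H}om_{\O_X}(Rf_*K,L) $$
in the case $L=\omega^\bullet_X$.

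To establish this isomorphism I would argue by the Yoneda lemma, testing both sides against an arbitrary $N\in D(X)$ and chasing adjunctions. Using the adjunction $(Rf_*,f^!)$ and then the tensor--Hom adjunction gives
\begin{align*}
\Hom_{D(X)}\bigl(N,\,Rf_*R\mathcal{H}om_{\O_{X'}}(K,f^!L)\bigr)
&\cong \Hom_{D(X')}\bigl(Lf^*N,\,R\mathcal{H}om_{\O_{X'}}(K,f^!L)\bigr)\\
&\cong \Hom_{D(X')}\bigl(Lf^*N\otimes^L_{\O_{X'}}K,\,f^!L\bigr)\\
&\cong \Hom_{D(X)}\bigl(Rf_*(Lf^*N\otimes^L_{\O_{X'}}K),\,L\bigr).
\end{align*}
The projection formula $Rf_*(Lf^*N\otimes^L_{\O_{X'}}K)\cong N\otimes^L_{\O_X}Rf_*K$, valid for the proper morphism $f$, rewrites the last group as $\Hom_{D(X)}(N\otimes^L_{\O_X}Rf_*K,L)$, which by the tensor--Hom adjunction is $\Hom_{D(X)}(N,R\mathcal{H}om_{\O_X}(Rf_*K,L))$. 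Since every arrow in this chain is natural in $N$, the Yoneda lemma produces the asserted isomorphism in $D(X)$.

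The substantive inputs here --- the construction and functoriality of $f^!$ via Nagata compactification and the trace map, and the projection formula in the derived category --- are precisely what the cited reference assembles, and I would import them rather than reprove them. The hard part is therefore not the formal adjunction chase but checking that the relevant coherence and boundedness hypotheses hold, so that each step is legitimate: one needs $Rf_*$ to preserve the bounded coherent derived category (which holds because $f$ is proper and $X'$ has finite dimension), and one needs $f^!\omega^\bullet_X$ to again be a bounded coherent complex, so that the several $R\mathcal{H}om$ functors are defined and the Yoneda argument genuinely takes place inside $D(X)$ as defined in this paper.
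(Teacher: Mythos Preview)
The paper does not give a proof of this statement at all: it is stated with a citation to the Stacks Project (tag 0AU3) and used as a black box. Your sketch is a correct outline of the standard argument --- identify $\omega^\bullet_{X'}\cong f^!\omega^\bullet_X$ via pseudofunctoriality of $(-)^!$, then deduce the sheafified duality $Rf_*R\mathcal{H}om(K,f^!L)\cong R\mathcal{H}om(Rf_*K,L)$ from the adjunctions and the projection formula --- and this is essentially how the cited reference organizes things. One small slip in the prose: the adjunction driving the first displayed isomorphism is $(Lf^*,Rf_*)$, not $(Rf_*,f^!)$; the latter enters at the third line. The computation itself is right.
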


Let us prove the following duality result. For a similar lemma, see Proposition 11.6 in \cite{singularitiesofpairs}.

\begin{lem}\thlabel{warmup1}
Let $f:X'\to X$ be any proper birational morphism between normal varieties and $\mathcal{E}$ any vector bundle on $X'$. Fix any point $x\in X$. Suppose that $X'$ is Gorenstein. Then for $i\ge 0$, $(R^if_*\mathcal{E})_x=0$ if and only if $H^{\codim_X x-i}_{f^{-1}(x)}(X',\omega_{X'}\otimes \mathcal{E}^{\vee})=0$.
\end{lem}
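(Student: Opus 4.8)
\emph{Proof plan.} The idea is to reduce the asserted equivalence to a single stalkwise duality isomorphism obtained by combining Grothendieck duality with local duality. Write $d=\dim X=\dim X'$ (the two agree because $f$ is birational) and $c=\codim_X x$; by the dimension formula for varieties $c=\dim\O_{X,x}$. As $X'$ is Gorenstein, $\omega_{X'}$ is invertible and $\omega^{\bullet}_{X'}\cong\omega_{X'}[d]$. Set $\mathcal{G}:=\omega_{X'}\otimes\mathcal{E}^{\vee}$, a vector bundle on $X'$ with $\mathcal{G}^{\vee}\otimes\omega_{X'}\cong\mathcal{E}$.

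First I would apply Grothendieck duality (0AU3, (4) in \cite{Stacks}) to $f$ and $\mathcal{G}$, using that $\mathcal{G}$ is locally free:
$$R\mathcal{H}om_{\O_X}(Rf_*\mathcal{G},\omega^{\bullet}_X)\;\cong\;Rf_*\,R\mathcal{H}om_{\O_{X'}}(\mathcal{G},\omega_{X'}[d])\;\cong\;(Rf_*\mathcal{E})[d].$$
Next I would take stalks at $x$, which is harmless since higher direct images commute with the flat localization $\Spec\O_{X,x}\to X$: the right-hand side becomes $R\Gamma(X'_x,\mathcal{E}_x)[d]$ and the left-hand side becomes $R\Hom_{\O_{X,x}}\!\big((Rf_*\mathcal{G})_x,(\omega^{\bullet}_X)_x\big)$. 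Here $(\omega^{\bullet}_X)_x$ is a dualizing complex for $\O_{X,x}$ and differs from the normalized dualizing complex by the shift $(\omega^{\bullet}_X)_x\cong\omega^{\bullet}_{\O_{X,x}}[\dim\overline{\{x\}}]=\omega^{\bullet}_{\O_{X,x}}[d-c]$; when $x$ is closed there is no shift and one is in the situation of \thref{localduality}. Substituting this, then invoking local duality over $\O_{X,x}$ in Matlis form together with the compatibility $R\Gamma_{\m_{X,x}}\!\big((Rf_*\mathcal{G})_x\big)\cong R\Gamma_{f^{-1}(x)}(X'_x,\mathcal{G}_x)$ of local cohomology with proper pushforward, I obtain (after completing at $x$, which affects none of the vanishing statements in question)
$$R\Gamma_{f^{-1}(x)}(X'_x,\mathcal{G}_x)^{\vee}\;\cong\;R\Gamma(X'_x,\mathcal{E}_x)[c]\qquad\text{in }D(\O_{X,x}),$$
where $(-)^{\vee}=\Hom_{\O_{X,x}}(-,E)$ and $E$ is the injective hull of the residue field of $\O_{X,x}$. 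Taking cohomology and reindexing via $i=n+c$ gives
$$(R^if_*\mathcal{E})_x\;\cong\;\big(H^{c-i}_{f^{-1}(x)}(X'_x,\mathcal{G}_x)\big)^{\vee}\;=\;\big(H^{\,\codim_X x-i}_{f^{-1}(x)}(X',\omega_{X'}\otimes\mathcal{E}^{\vee})\big)^{\vee}.$$

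To finish, I would observe that $H^{\codim_X x-i}_{f^{-1}(x)}(X',\omega_{X'}\otimes\mathcal{E}^{\vee})$ is Artinian over $\O_{X,x}$: via the spectral sequence $H^{p}_{\m_{X,x}}\!\big((R^qf_*\mathcal{G})_x\big)\Rightarrow H^{p+q}_{f^{-1}(x)}(X'_x,\mathcal{G}_x)$ it is a subquotient of local cohomology modules (supported at the maximal ideal) of finitely generated $\O_{X,x}$-modules. Hence Matlis duality is faithful on it, so it vanishes if and only if its Matlis dual $(R^if_*\mathcal{E})_x$ does; outside the range $0\le i\le\codim_X x$ both modules vanish and the equivalence is vacuous. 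I expect the only genuinely delicate point to be the bookkeeping of shifts --- one must land in cohomological degree $\codim_X x-i$ rather than $\dim X-i$, and this discrepancy is precisely absorbed by the shift between $(\omega^{\bullet}_X)_x$ and the normalized dualizing complex of $\O_{X,x}$ at a non-closed point --- together with checking that properness (hence the applicability of Grothendieck duality and of the local cohomology/pushforward compatibility) survives the base change $X'_x\to\Spec\O_{X,x}$.
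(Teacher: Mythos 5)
Your proposal is correct and follows essentially the same route as the paper's proof: both rest on the chain Grothendieck duality $+$ local (Matlis) duality $+$ the Leray-type identification $R\Gamma_{\m_{X,x}}\bigl((Rf_*\mathcal{G})_x\bigr)\cong R\Gamma_{f^{-1}(x)}(X'_x,\mathcal{G}_x)$, together with the Gorenstein hypothesis making $\omega_{X'}$ invertible, and conclude via faithfulness of $\Hom_{\O_{X,x}}(-,E)$. You merely compose the same isomorphisms in the opposite order and are, if anything, more explicit than the paper about the shift at non-closed points and the completion/Artinian bookkeeping.
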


\begin{proof}
Let $k$ be the residue field of $\O_{X,x}$ and $E$ the injective hull of $k$ as an $\O_{X,x}$-module. We have
$$ 
\begin{aligned}
R\Gamma_{f^{-1}(x)}(X',\omega_{X'}\otimes \mathcal{E}^{\vee})&\cong R\Gamma_x(X,Rf_*(\omega_{X'}\otimes \mathcal{E}^{\vee}))
\\ &\cong \Hom_{\O_{X,x}}(R\mathcal{H}om_{\O_X}(Rf_*(\omega_{X'}\otimes \mathcal{E}^{\vee})_x,\omega^{\bullet}_{X,x}),E)
\\ &\cong \Hom_{\O_{X,x}}(Rf_*R\mathcal{H}om_{\O_{X'}}(\omega_{X'}\otimes \mathcal{E}^{\vee},\omega_{X'})_x[\codim_X x],E)
\\ &\cong \Hom_{\O_{X,x}}(Rf_*\mathcal{H}om_{\O_{X'}}(\omega_{X'}\otimes \mathcal{E}^{\vee},\omega_{X'})_x[\codim_X x],E)
\\ &\cong \Hom_{\O_{X,x}}((Rf_*\mathcal{E})_x[\codim_X x],E),
\end{aligned}
$$
where we used the Leray spectral sequence in the first equality, the local duality in the second equality, the Grothendieck duality in the third equality, the fact that $\omega_{X'}$ is a line bundle on $X'$ in the forth equality. Hence, since $E$ is an injective $\O_{X,x}$-module, we have the assertion.
\end{proof}

Let us prove the following corollary.

\begin{coro}[See Lemma 3.5.10 in \cite{Nakayama} and Corollary 11.7 in \cite{singularitiesofpairs}]\thlabel{warmup2}
Let $X$ be any normal projective variety over a field of characteristic $0$, $f:X'\to X$ any resolution of $X$ and $D$ any $\Q$-Cartier Weil divisor on $X'$, which is anti $f$-nef. Then $f_*\O_{X'}(D)=\O_X(f_*D)$.
\end{coro}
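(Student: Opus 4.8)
The plan is to prove the equality of these two rank-one reflexive sheaves by comparing their sections as subsheaves of the constant sheaf $\mathcal{K}$ of rational functions on $X$; here one uses that $f$ is birational to identify $K(X')=K(X)=\mathcal{K}$. For an open $W\subseteq X$, a function $g\in\mathcal{K}$ lies in $\Gamma(W,f_*\O_{X'}(D))=\Gamma(f^{-1}(W),\O_{X'}(D))$ exactly when $\operatorname{div}_{X'}(g)+D\ge 0$ over $f^{-1}(W)$, and it lies in $\Gamma(W,\O_X(f_*D))$ exactly when $\operatorname{div}_X(g)+f_*D\ge 0$ over $W$. Since $\O_X(f_*D)$ is the divisorial sheaf of the Weil divisor $f_*D$ on the normal variety $X$, it is reflexive, so it suffices to show that these two subsheaves of $\mathcal{K}$ coincide.

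One inclusion holds with no hypothesis on $D$. If $\operatorname{div}_{X'}(g)+D\ge 0$ over $f^{-1}(W)$, then pushing divisors forward along $f$ --- an operation that sends effective divisors to effective divisors and satisfies $f_*\operatorname{div}_{X'}(g)=\operatorname{div}_X(g)$, because any prime divisor of $X'$ not contracted by $f$ is the strict transform of a prime divisor of $X$ and carries the same valuation of $\mathcal{K}$, while contracted components map to codimension $\ge 2$ and are killed --- gives $\operatorname{div}_X(g)+f_*D\ge 0$ over $W$. Hence $f_*\O_{X'}(D)\subseteq\O_X(f_*D)$.

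For the reverse inclusion, fix an open $W\subseteq X$ and a function $g\in\Gamma(W,\O_X(f_*D))$, so $\operatorname{div}_X(g)+f_*D\ge 0$ over $W$. Apply the argument to the proper birational morphism $f^{-1}(W)\to W$ of normal varieties (for which $-D$ is still nef, a curve contracted over $W$ being a curve contracted by $f$ whose intersection number is unchanged by the open immersion), so we may as well take $W=X$. Put $\widetilde{G}:=\operatorname{div}_{X'}(g)+D$, a $\Q$-Cartier divisor on $X'$ with $f_*\widetilde{G}=\operatorname{div}_X(g)+f_*D\ge 0$. For every curve $C$ contracted by $f$ we have $\widetilde{G}\cdot C=D\cdot C\le 0$, since $\operatorname{div}_{X'}(g)\cdot C=0$ and $-D$ is $f$-nef; thus $-\widetilde{G}$ is $f$-nef. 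The negativity lemma (Lemma~3.39 in \cite{KM}), applied to $f$, then gives $\widetilde{G}\ge 0$, i.e. $\operatorname{div}_{X'}(g)+D\ge 0$, so $g\in\Gamma(X,f_*\O_{X'}(D))$. This proves $\O_X(f_*D)\subseteq f_*\O_{X'}(D)$, hence the claimed equality.

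The only substantial input is the negativity lemma, and that is precisely where the hypotheses on $D$ are spent: $\Q$-Cartierness (automatic here, as $X'$ is smooth) makes the intersection numbers meaningful, and anti-$f$-nefness of $D$ is exactly the nef hypothesis needed for $-\widetilde{G}$. An alternative would be to note that $f_*\O_{X'}(D)$ and $\O_X(f_*D)$ already agree on the big open locus over which $f$ is an isomorphism and then deduce equality from reflexivity of $f_*\O_{X'}(D)$; but establishing that reflexivity needs the same negativity input, so the computation above is the more economical route.
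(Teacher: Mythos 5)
Your proof is correct, but it takes a genuinely different route from the paper. You prove the equality directly on sections inside the constant sheaf $\mathcal{K}$: the easy inclusion $f_*\O_{X'}(D)\subseteq \O_X(f_*D)$ by pushing forward divisors, and the reverse inclusion by applying the negativity lemma (Lemma 3.39 in \cite{KM}) to $\widetilde{G}=\operatorname{div}_{X'}(g)+D$, which is legitimate since $-\widetilde{G}$ is $f$-nef ($\operatorname{div}_{X'}(g)$ has degree zero on every contracted curve) and $f_*\widetilde{G}\ge 0$. The paper instead follows exactly the ``alternative'' you mention at the end, but with a different input than you anticipate: it observes that $\O_X(f_*D)$ and $(f_*\O_{X'}(D))^{\vee\vee}$ agree in codimension one, so everything reduces to the reflexivity of $f_*\O_{X'}(D)$, which is proved via the local cohomology spectral sequence, the duality statement of \thref{warmup1}, and the relative Kawamata--Viehweg vanishing $R^if_*\O_{X'}(K_{X'}-D)=0$ (Theorem 1-2-3 in \cite{Kawamata}) --- not via negativity. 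The trade-off: your argument is more elementary and does not actually use projectivity of $X$ or characteristic $0$ (the negativity lemma holds in any characteristic), so it proves a more general statement; the paper's argument consumes the char-$0$ vanishing theorem but stays within the local-duality/spectral-sequence toolkit (\thref{warmup1}, \thref{twospectral}) that is reused throughout the rest of the paper, and it yields the reflexivity of $f_*\O_{X'}(D)$ as an explicit intermediate fact.
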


\begin{proof}
Note that both $\O_X(f_*D)$ and $(f_*\O_{X'}(D))^{\vee\vee}$ are reflexive, and they are isomorphic in codimension $1$. Hence, $\O_X(f_*D)\cong (f_*\O_{X'}(D))^{\vee\vee}$ by Theorem 1.12 in \cite{HarGor}, and thus it suffices to show that $f_*\O_{X'}(D)$ is reflexive. Let $x\in X$ be any point of codimension $\ge 2$.

Consider the following spectral sequence
$$ E^{st}_2=H^s_x(X,R^tf_*\O_{X'}(D))\implies H^{s+t}_{f^{-1}(x)}(X',\O_{X'}(D)).$$
Inspecting the spectral sequence, we have $E^{10}_2=E^{10}_{\infty}$. Hence, by considering the edge map $E^{10}_{\infty}\to E^1$, it suffices to show that $H^1_{f^{-1}(x)}(X',\O_{X'}(D))=0$. Note that by the relative Kawamata-Viehweg vanishing theorem (Theorem 1-2-3 in \cite{Kawamata}), $R^if_*\O_X(K_{X'}-D)=0$ for $i\ge 1$. Thus, by \thref{warmup1}, we have the assertion.
\end{proof}

\begin{rmk}
Note that it is a special case of \cite{Nakayama}, Lemma 3.5.10. Nakayama proved the lemma using the relative Zariski decomposition.
\end{rmk}

We will use the following lemma without any mention.

\begin{lem}[See Lemma 2.2 in \cite{+}]
For any normal variety $X$ and any coherent sheaf $\mathcal{F}$ on $X$, we have
$$ \mathcal{H}om_{\O_X}(\mathcal{F},\omega^{\bullet}_X)[-\dim X]=\mathcal{H}om_{\O_X}(\mathcal{F},\omega_X).$$
\end{lem}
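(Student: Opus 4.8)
The plan is to unwind the normalized dualizing complex $\omega^\bullet_X$ in its lowest cohomological degree, where the identity becomes a one-line consequence of the left exactness of $\mathcal{H}om_{\O_X}(\mathcal{F},-)$. Write $n:=\dim X$. I would start from two standard structural facts about $\omega^\bullet_X$ on the $n$-dimensional variety $X$: first, $\mathcal{H}^i(\omega^\bullet_X)=0$ for $i<-n$ (a general property of a dualizing complex on a Noetherian scheme of finite Krull dimension; cf.\ \cite{Stacks}); second, $\mathcal{H}^{-n}(\omega^\bullet_X)$ is by definition the canonical (dualizing) sheaf $\omega_X$ of $X$, which for $X$ normal is the pushforward of the canonical bundle from the smooth locus. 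Since a dualizing complex has finite injective dimension and, by the first fact, lies in $D^{\ge -n}(X)$, I may choose a bounded complex $\mathcal{I}^\bullet$ of injective quasi-coherent $\O_X$-modules, vanishing in degrees $<-n$, with $\mathcal{I}^\bullet\simeq\omega^\bullet_X$; then $\mathcal{I}^{-n-1}=0$ forces $\ker(\mathcal{I}^{-n}\to\mathcal{I}^{-n+1})=\mathcal{H}^{-n}(\mathcal{I}^\bullet)=\omega_X$.

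Next I would note that $\mathcal{H}om_{\O_X}(\mathcal{F},\omega^\bullet_X)$ is represented by the termwise Hom-complex $\mathcal{H}om^\bullet_{\O_X}(\mathcal{F},\mathcal{I}^\bullet)$, which again sits in degrees $\ge -n$, hence after the shift $[-n]$ in degrees $\ge 0$. Applying the left-exact functor $\mathcal{H}om_{\O_X}(\mathcal{F},-)$ to the exact sequence $0\to\omega_X\to\mathcal{I}^{-n}\to\mathcal{I}^{-n+1}$ then yields
$$\mathcal{H}om_{\O_X}(\mathcal{F},\omega_X)=\ker\!\big(\mathcal{H}om_{\O_X}(\mathcal{F},\mathcal{I}^{-n})\to\mathcal{H}om_{\O_X}(\mathcal{F},\mathcal{I}^{-n+1})\big)=\mathcal{H}^{-n}\!\big(\mathcal{H}om^\bullet_{\O_X}(\mathcal{F},\mathcal{I}^\bullet)\big),$$
which, after the shift by $[-n]$, is precisely the asserted identification (and there is nothing in lower degrees). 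Equivalently, one can phrase this without choosing a representative: the canonical triangle $\omega_X[n]=\tau_{\le -n}\omega^\bullet_X\to\omega^\bullet_X\to\tau_{\ge -n+1}\omega^\bullet_X\xrightarrow{+1}\omega_X[n+1]$ — which in fact splits, since $\operatorname{Hom}_{D(X)}(\tau_{\ge -n+1}\omega^\bullet_X,\omega_X[n+1])=0$ for $t$-structure reasons — becomes, under $R\mathcal{H}om_{\O_X}(\mathcal{F},-)$ and using that $R\mathcal{H}om_{\O_X}(\mathcal{F},\tau_{\ge -n+1}\omega^\bullet_X)\in D^{\ge -n+1}(X)$, the identity $\mathcal{H}^{-n}R\mathcal{H}om_{\O_X}(\mathcal{F},\omega^\bullet_X)\cong\mathcal{H}om_{\O_X}(\mathcal{F},\omega_X)$ in the bottom degree.

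I expect the only real difficulty to be bookkeeping rather than substance. Since $\mathcal{H}om_{\O_X}(\mathcal{F},-)$ is merely left exact, the Hom-complex in general carries higher cohomology sheaves $\mathcal{E}xt^i_{\O_X}(\mathcal{F},\omega^\bullet_X)$ with $i>-n$, supported on the locus where $X$ fails to be Cohen--Macaulay; the content of the lemma is precisely that these do not reach the bottom degree $-\dim X$, where left exactness alone fixes the value $\mathcal{H}om_{\O_X}(\mathcal{F},\omega_X)$. One therefore has to take the representative of $\omega^\bullet_X$ beginning in degree exactly $-\dim X$ and keep track of the cohomological degree in which the identity is being asserted — this is where the vanishing of $\mathcal{H}^i(\omega^\bullet_X)$ for $i<-\dim X$ and the identification $\mathcal{H}^{-\dim X}(\omega^\bullet_X)=\omega_X$, and hence the hypothesis that $X$ is a variety, come in.
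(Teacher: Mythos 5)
Your argument is correct and is essentially the paper's own proof: both use the truncation triangle exhibiting $\omega_X$ (suitably shifted) as the bottom cohomology sheaf of $\omega^{\bullet}_X$, apply $R\mathcal{H}om_{\O_X}(\mathcal{F},-)$, and observe that the cone, living in cohomological degrees strictly above the bottom one, contributes nothing there by left (t-)exactness of $\mathcal{H}om_{\O_X}(\mathcal{F},-)$. The only blemish is the parenthetical claim that the triangle splits because $\operatorname{Hom}_{D(X)}(\tau_{\ge -\dim X+1}\omega^{\bullet}_X,\omega_X[\dim X+1])$ vanishes for $t$-structure reasons — maps from an object of $D^{\ge -\dim X+1}$ to one concentrated in degree $-\dim X-1$ are governed by $\mathcal{E}xt^{\ge 2}$ groups and need not vanish — but since the splitting is never used, this does not affect the proof.
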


\begin{proof}
We may consider the following exact triangle
$$ \omega_X[-\dim X]\to \omega^{\bullet}_X\to C\to $$
for some complex $C$ in $D(X)$. If we apply $R\mathcal{H}om_{\O_X}(\mathcal{F},-)$ to this triangle, then we obtain the following exact triangle
$$ R\mathcal{H}om_{\O_X}(\mathcal{F},\omega_X[-\dim X])\to R\mathcal{H}om_{\O_X}(\mathcal{F},\omega^{\bullet}_X)\to R\mathcal{H}om_{\O_X}(\mathcal{F},C)\to .$$
Note that $C$ has cohomological degree $\ge 1$ and hence $\mathcal{H}om_{\O_X}(\mathcal{F},C)=0$. Thus, the long exact cohomology sequence gives us the assertion.
\end{proof}

\section{Reflexive sheaves}
In this section, we collect important facts about reflexive sheaves. For a variety $X$, the generic point $\eta\in X$ and a coherent sheaf $\mathcal{F}$ on $X$, $\mathrm{rank}\,\mathcal{F}$ denotes the dimension of $\mathcal{F}_{\eta}$ over the function field of $X$.


\begin{rmk}
For any normal Gorenstein variety $X$ and any reflexive sheaf $\mathcal{F}$ of rank $1$ on $X$, we have $\mathcal{F}^D=\mathcal{F}^{\vee}\otimes \omega_X$.

Moreover, we can construct a natural map $\mathcal{F}\to \mathcal{F}^{DD}$. Indeed, for any local section $a\in \mathcal{F}$, consider a map
$$ \varphi:\mathcal{F}\to \mathcal{F}^{DD},a\mapsto (f\mapsto f(a))\,\text{ for }f\in \mathcal{F}^D:=\mathcal{H}om_{\O_X}(\mathcal{F},\omega_X).$$
Note that if $X$ is normal, the map $\mathcal{F}^D\to \mathcal{F}^{DDD}$ is an isomorphism, because $\mathcal{F}^{D}$ is a reflexive sheaf. See Lemma 0AY4 in \cite{Stacks}, Lemma 0AWE in \cite{Stacks}, and \thref{refe} below.
\end{rmk}

Now, let us prove the following two lemmas.

\begin{lem}
Let $X$ be any normal variety and $\mathcal{F}$ any coherent sheaf on $X$. Then the following are equivalent:
\begin{itemize}
    \item[(a)] $\mathcal{F}$ is torsion-free.
    \item[(b)] The map $\mathcal{F}\to \mathcal{F}^{DD}$ is injective.
\end{itemize}
\end{lem}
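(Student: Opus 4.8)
The plan is to prove the equivalence of torsion-freeness and injectivity of the natural map $\mathcal{F}\to\mathcal{F}^{DD}$ by a localization argument, reducing to a standard commutative-algebra fact about modules over a normal domain. Recall that $\mathcal{F}^D=\mathcal{H}om_{\O_X}(\mathcal{F},\omega_X)$ and $\mathcal{F}^{DD}=(\mathcal{F}^D)^D$; since $\omega_X$ is a torsion-free (indeed reflexive rank $1$) sheaf on the integral normal scheme $X$, the double dual $\mathcal{F}^{DD}$ is always torsion-free. This immediately gives one direction: if $\mathcal{F}\to\mathcal{F}^{DD}$ is injective, then $\mathcal{F}$ embeds into a torsion-free sheaf, hence is itself torsion-free. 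So the content is the forward direction (a)$\implies$(b).

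For (a)$\implies$(b), first I would identify the kernel of $\varphi\colon\mathcal{F}\to\mathcal{F}^{DD}$. A local section $a$ lies in $\ker\varphi$ precisely when $f(a)=0$ for every local homomorphism $f\colon\mathcal{F}\to\omega_X$. I would argue that this kernel is supported in codimension $\ge 1$: over the generic point $\eta$, $\mathcal{F}_\eta$ is a finite-dimensional vector space over the function field $K(X)$ and $\omega_{X,\eta}\cong K(X)$, so $\mathcal{F}^D_\eta=\Hom_{K(X)}(\mathcal{F}_\eta,K(X))$ is the genuine linear dual and the evaluation pairing is perfect; hence $\varphi_\eta$ is injective (in fact an isomorphism). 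Since $\mathcal{F}$ is torsion-free, $\mathcal{F}\to\mathcal{F}_\eta$ is injective, and the factorization through $\mathcal{F}^{DD}$ together with injectivity of $\varphi_\eta$ forces $\varphi$ itself to be injective. This is really the whole argument; the only care needed is to check that $\mathcal{F}^{DD}\to\mathcal{F}^{DD}_\eta$ and the compatibility of the evaluation maps with restriction to $\eta$ behave as expected, which is routine.

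Alternatively, and perhaps more cleanly for the paper's bookkeeping, I would phrase it as: torsion-free means $\mathcal{F}$ injects into its stalk at the generic point, equivalently into $\mathcal{F}\otimes_{\O_X}K(X)$; the natural map $\varphi$ fits into a commutative triangle with this injection and with the generic-fiber evaluation isomorphism $\mathcal{F}_\eta\xrightarrow{\sim}\mathcal{F}^{DD}_\eta$, and a diagram chase gives $\ker\varphi\subseteq\ker(\mathcal{F}\to\mathcal{F}_\eta)=0$. For the converse I would invoke that $\omega_X$ is torsion-free (a normalized dualizing sheaf on a normal variety is reflexive of rank $1$, hence $(S_2)$ hence torsion-free), that $\mathcal{H}om$ into a torsion-free sheaf is torsion-free, and hence $\mathcal{F}^{DD}$ is torsion-free, so any subsheaf — in particular $\mathrm{im}(\varphi)\cong\mathcal{F}$ when $\varphi$ is injective — is torsion-free.

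I do not expect a serious obstacle here; this is a foundational lemma. The one point requiring a little attention is justifying that $\mathcal{H}om_{\O_X}(-,\omega_X)$ commutes suitably with passing to the generic point so that $\mathcal{F}^D_\eta$ really is the $K(X)$-linear dual of $\mathcal{F}_\eta$ — this follows from coherence of $\mathcal{F}$ and the fact that $\mathcal{H}om$ commutes with localization for finitely presented modules — and symmetrically that $(\mathcal{F}^{DD})_\eta\cong(\mathcal{F}_\eta)^{**}$. Once that is in place, perfectness of the evaluation pairing for finite-dimensional vector spaces over a field closes the argument.
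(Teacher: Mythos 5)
Your proposal is correct and follows essentially the same route as the paper: for (a)$\implies$(b) you show the kernel dies because the map is injective at the generic point and a torsion-free sheaf injects into its generic stalk (the paper phrases this as the kernel being torsion-free of rank $0$, checking the map is an isomorphism in codimension $1$ via DVRs, but the generic point already suffices), and for (b)$\implies$(a) both arguments use that $\omega_X$ is reflexive, hence $\mathcal{F}^{DD}$ is torsion-free, and a subsheaf of a torsion-free sheaf is torsion-free.
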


\begin{proof}
It is worth noting that for any variety $X$, $\omega_X$ is $(S_2)$. See Lemma 0AWE in \cite{Stacks}. Hence, if $X$ is normal, then $\omega_X$ is reflexive.

If $\mathcal{F}$ is torsion-free, then the kernel $\mathcal{K}$ of the map is torsion-free because any subsheaf of a torsion-free sheaf is also torsion-free. Furthermore, since $\omega_X$ is reflexive, $\mathcal{F}^{DD}$ is torsion-free by Lemma 0AY4 in \cite{Stacks}. Consider the fact that for every point $x\in X$ of codimension $1$, $\mathcal{F}_x\to \mathcal{F}^{DD}_x$ is an isomorphism (which follows from Lemma 0CC4 in \cite{Stacks}, and the fact that $\O_{X,x}$ is a discrete valuation ring). Then $\mathcal{K}=0$ because $\mathrm{rank}\,\mathcal{K}=0$.

For the converse, $\mathcal{F}^{DD}$ is torsion-free. By our assumption, $\mathcal{F}$ is torsion-free because any subsheaf of a torsion-free sheaf is also torsion-free.
\end{proof}

\begin{lem}\thlabel{refe}
Let $X$ be any normal variety and $\mathcal{F}$ any coherent sheaf on $X$. Then the following are equivalent:
\begin{itemize}
    \item[(a)] $\mathcal{F}$ is reflexive.
    \item[(b)] The natural map $\mathcal{F}\to \mathcal{F}^{DD}$ is an isomorphism.
\end{itemize}
\end{lem}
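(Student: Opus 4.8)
The plan is to prove the two implications of \thref{refe} separately. The single nontrivial ingredient I will need is that $\mathcal{H}om_{\O_X}(\mathcal{G},\omega_X)$ is $(S_2)$ for \emph{every} coherent sheaf $\mathcal{G}$ on the normal variety $X$ (it is also torsion-free, which the paper has already recorded); everything else is a formal diagram chase with the local cohomology long exact sequence, combined with the fact that $\mathcal{F}\to\mathcal{F}^{DD}$ is an isomorphism at every point of codimension $\le 1$.

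For $(b)\Rightarrow(a)$ I would argue as follows. If $\mathcal{F}\to\mathcal{F}^{DD}$ is an isomorphism, then $\mathcal{F}\cong(\mathcal{F}^D)^D=\mathcal{H}om_{\O_X}(\mathcal{G},\omega_X)$ with $\mathcal{G}:=\mathcal{F}^D$, so it is enough to check that $\mathcal{N}:=\mathcal{H}om_{\O_X}(\mathcal{G},\omega_X)$ is $(S_2)$. This can be done locally at a point $x$: set $A=\O_{X,x}$, a normal local domain of dimension $d$, and $\omega=\omega_{X,x}$, which is torsion-free and $(S_2)$ over $A$ because $\omega_X$ is (Lemma 0AWE in \cite{Stacks}). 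Taking a presentation $A^{m}\to A^{n}\to\mathcal{G}_x\to 0$ and dualizing into $\omega$ produces a short exact sequence $0\to\mathcal{N}_x\to\omega^{\,n}\to I\to 0$ with $I\subseteq\omega^{\,m}$ torsion-free; the long exact local cohomology sequence at $\m_{X,x}$, together with $H^i_{\m_{X,x}}(\omega^{\,n})=0$ for $i<\min\{2,d\}$ and $H^0_{\m_{X,x}}(I)=0$, then forces $H^i_{\m_{X,x}}(\mathcal{N}_x)=0$ for $i<\min\{2,d\}$, which is the $(S_2)$ condition.

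For $(a)\Rightarrow(b)$ I would start from the fact that an $(S_2)$ sheaf is torsion-free, so by the preceding lemma the map $\varphi\colon\mathcal{F}\to\mathcal{F}^{DD}$ is injective; put $\mathcal{Q}:=\operatorname{coker}\varphi$. Since $\varphi$ is the biduality map of a finite-dimensional vector space at the generic point, and at a codimension-$1$ point $\O_{X,x}$ is a DVR over which $\mathcal{F}_x$ and $\mathcal{F}^{DD}_x$ are free of the same rank (cf. the proof of the preceding lemma and Lemma 0CC4 in \cite{Stacks}), $\varphi_x$ is an isomorphism whenever $\codim_X x\le 1$; hence $\Supp\mathcal{Q}$ has codimension $\ge 2$. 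If $\mathcal{Q}\ne 0$, pick an associated point $x$ of $\mathcal{Q}$, so that $\codim_X x\ge 2$ and $H^0_x(X,\mathcal{Q})\ne 0$, and feed $0\to\mathcal{F}\to\mathcal{F}^{DD}\to\mathcal{Q}\to 0$ into the local cohomology sequence at $x$:
$$H^0_x(X,\mathcal{F}^{DD})\longrightarrow H^0_x(X,\mathcal{Q})\longrightarrow H^1_x(X,\mathcal{F}).$$
The left-hand term vanishes since $\mathcal{F}^{DD}$ is torsion-free (Lemma 0AY4 in \cite{Stacks}) and the right-hand term vanishes since $\mathcal{F}$ is $(S_2)$ and $\codim_X x\ge 2$; this contradicts $H^0_x(X,\mathcal{Q})\ne 0$, so $\mathcal{Q}=0$ and $\varphi$ is an isomorphism.

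I expect the main obstacle to be precisely the $(S_2)$-ness of $\mathcal{H}om_{\O_X}(\mathcal{G},\omega_X)$ needed for $(b)\Rightarrow(a)$: the care required is that $\omega_X$ is only $(S_2)$ and not necessarily Cohen–Macaulay, so a naive Grothendieck-duality argument is unavailable and one must genuinely use a presentation of $\mathcal{G}$ together with the bookkeeping in the $\min\{q+1,\codim\}$ convention at small-codimension points. The remainder should be routine.
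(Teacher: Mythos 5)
Your proof is correct. For (b)$\Rightarrow$(a) it has the same skeleton as the paper's: present $\mathcal{F}^D$ by free (resp.\ locally free) sheaves, dualize into $\omega_X$, and use left-exactness of $\mathcal{H}om_{\O_X}(-,\omega_X)$; the difference is only in the last step, where the paper cites reflexivity of the $\mathcal{E}_i^D$ and Lemma 0EBG in \cite{Stacks}, while you check the $(S_2)$ condition pointwise via the local cohomology sequence for $0\to\mathcal{N}_x\to\omega^{\,n}\to I\to 0$, using only that $\omega_X$ is torsion-free and $(S_2)$. For (a)$\Rightarrow$(b) your route genuinely differs: the paper quotes Lemma 0AY4 in \cite{Stacks} together with Proposition 1.11 in \cite{HarGor}, whereas you kill the cokernel of $\mathcal{F}\to\mathcal{F}^{DD}$ by the associated-point/local-cohomology device the paper itself uses in \thref{goo}. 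Your version is longer but works directly with the paper's definition of reflexive as torsion-free plus $(S_2)$, avoiding the implicit identification with reflexivity in the usual double-dual-into-$\O_X$ sense that the cited references use; the paper's version buys brevity. One cosmetic point: at codimension-one points, ``free of the same rank'' alone does not give that $\varphi_x$ is an isomorphism --- what does is that $\O_{X,x}$ is a DVR, so $\mathcal{F}_x$ is free and $\omega_{X,x}$ is invertible (Lemma 0CC4 in \cite{Stacks}), which is exactly what the paper invokes as well.
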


\begin{proof}
Let us assume that $\mathcal{F}$ is reflexive. By Lemma 0AY4 in \cite{Stacks}, we know that $\mathcal{F}^{DD}$ is also reflexive and $\mathcal{F}\to \mathcal{F}^{DD}$ is an isomorphism outside a codimension $\ge 2$ closed subscheme of $X$. Hence the map is an isomorphism by \cite{HarGor}, Proposition 1.11.

For the converse, let $\mathcal{E}_1\to \mathcal{E}_0\to \mathcal{F}^D\to 0$ be any resolution of $\mathcal{F}$ in which $\mathcal{E}_0,\mathcal{E}_1$ are vector bundles. By taking $\mathcal{H}om_{\O_X}(-,\omega_X)$, we have the following exact sequence
$$ 0\to \mathcal{F}\cong \mathcal{F}^{DD}\to \mathcal{E}^D_0\to \mathcal{E}^D_1,$$
and $\mathcal{E}^D_i$ are reflexive for $i=0,1$. Hence, $\mathcal{F}$ is reflexive (see Lemma 0EBG in \cite{Stacks}).
\end{proof}

\begin{rmk}
If $X$ is not normal, then such an equivalence is false. See Remark 0AY1 in \cite{Stacks}.

Let us note an important fact about reflexive sheaves. For any normal variety $X$ and any reflexive sheaf $\mathcal{F}$ on $X$, $\mathcal{F}$ is a vector bundle outside codimension $\ge 2$ closed subscheme of $X$. See Lemma 0AY6 in \cite{Stacks}.
\end{rmk}

We will use the following three lemmas without any mention. We believe that they are well-known to experts but we cannot find any reference about them. Hence, we include their proof.

\begin{lem}
Let $X$ be any variety and $\varphi:\mathcal{F}\to \mathcal{G}$ any map between torsion-free sheaves on $X$. Suppose that $\varphi$ is generically injective. Then $\varphi$ is an injection.
\end{lem}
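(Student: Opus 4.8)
The plan is to study the kernel $\mathcal{K}:=\ker\varphi$, which is a coherent subsheaf of $\mathcal{F}$, and to show it vanishes. The first step is to observe that $\mathcal{K}$ is torsion-free: torsion-freeness is checked on affine opens $U=\Spec A$ (with $A$ a domain, since $X$ is integral), and a submodule of a torsion-free $A$-module is torsion-free. So $\mathcal{K}$ is torsion-free as a subsheaf of the torsion-free sheaf $\mathcal{F}$.

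Next I would pass to the generic point $\eta$ of $X$. Localization at $\eta$ is exact, so $\mathcal{K}_\eta=\ker\bigl(\varphi_\eta:\mathcal{F}_\eta\to\mathcal{G}_\eta\bigr)$, and this is zero precisely because $\varphi$ is generically injective. Hence the stalk of the torsion-free sheaf $\mathcal{K}$ at the generic point vanishes.

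Finally I would invoke the standard fact that a torsion-free coherent sheaf on an integral scheme with vanishing stalk at the generic point is the zero sheaf. Concretely, writing $K=K(X)$ for the function field and $j:\Spec K\to X$ for the inclusion of $\eta$, torsion-freeness gives an injection $\mathcal{K}\hookrightarrow j_*(\mathcal{K}_\eta)$ (over an affine open $U=\Spec A$ this is just the injectivity of the localization map $M\to M\otimes_A K$ for a torsion-free $A$-module $M=\mathcal{K}(U)$), and $j_*(\mathcal{K}_\eta)=j_*(0)=0$. Therefore $\mathcal{K}=0$, i.e. $\varphi$ is injective. There is no real obstacle in this argument; the only point that deserves (minor) care is unwinding the definition of torsion-free to justify this last step, and in particular that $\mathcal{K}_\eta=0$ forces $\mathcal{K}=0$ rather than merely $\mathcal{K}$ being torsion.
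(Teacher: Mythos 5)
Your argument is correct and is essentially the paper's proof: both identify the kernel as a torsion-free subsheaf of $\mathcal{F}$, note that generic injectivity forces its generic stalk (equivalently, its rank) to vanish, and conclude that a torsion-free coherent sheaf of rank $0$ is zero. You merely spell out the last step more explicitly than the paper does.
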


\begin{proof}
Let $\mathcal{K}$ be the kernel of $\varphi$. Note that $\mathcal{K}$ is a subsheaf of a torsion-free sheaf $\mathcal{F}$. By the condition on $\varphi$, $\mathrm{rank}\,\mathcal{K}=0$ and hence $\mathcal{K}=0$, because any subsheaf of torsion-free sheaf is also torsion-free.
\end{proof}

\begin{lem}\thlabel{goo}
Let $X,X'$ be any normal varieties and $f:X'\to X$ any proper birational morphism. Suppose that $\mathcal{F}$ is any reflexive sheaf on $X$. Then $\mathcal{F}\cong f_*(f^*\mathcal{F})^{DD}$.
\end{lem}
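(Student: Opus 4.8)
The plan is to exhibit a natural morphism and prove it is an isomorphism. Let
$\varphi\colon\mathcal{F}\to f_*(f^*\mathcal{F})^{DD}$
be the composite of the adjunction unit $\mathcal{F}\to f_*f^*\mathcal{F}$ with $f_*$ applied to the canonical map $f^*\mathcal{F}\to(f^*\mathcal{F})^{DD}$; I claim $\varphi$ is an isomorphism. Two preliminary observations are immediate. First, the target is torsion-free, being the pushforward along the dominant morphism $f$ of the torsion-free (indeed reflexive) sheaf $(f^*\mathcal{F})^{DD}$. Second, $\varphi$ is injective: over the generic point of $X$, where $f$ is an isomorphism, $\varphi$ restricts to the identity, so $\ker\varphi$ is a torsion subsheaf of the torsion-free sheaf $\mathcal{F}$, hence zero.

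Next I would check that $\varphi$ is an isomorphism at all points of $X$ of codimension $\le 1$. At a codimension-$1$ point $x$, normality of $X$ makes $\O_{X,x}$ a discrete valuation ring, so the reflexive module $\mathcal{F}_x$ is free; hence $\mathcal{F}$ is locally free on some open neighbourhood $U$ of $x$. On $f^{-1}(U)$ the sheaf $f^*\mathcal{F}$ is then locally free, and since $\omega_{X'}$ is reflexive and $X'$ is normal one has $\mathcal{H}om_{\O_{X'}}(\omega_{X'},\omega_{X'})=\O_{X'}$, so the canonical map $f^*\mathcal{F}\to(f^*\mathcal{F})^{DD}$ is an isomorphism over $f^{-1}(U)$. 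Pushing forward and using $f_*\O_{X'}=\O_X$ (valid because $f$ is proper birational and $X$ is normal), $\varphi|_U$ is identified with the identity of $\mathcal{F}|_U$. Thus $\varphi$ is an isomorphism in codimension $1$, and in particular its cokernel $\mathcal{Q}$ is supported on a closed subset $Z\subseteq X$ of codimension $\ge 2$.

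To finish I would promote this to a genuine isomorphism, using that $\mathcal{F}$ is reflexive, hence $(S_2)$. From the short exact sequence $0\to\mathcal{F}\xrightarrow{\varphi}f_*(f^*\mathcal{F})^{DD}\to\mathcal{Q}\to 0$ one obtains $\mathcal{H}^0_Z(\mathcal{Q})=0$: the term $\mathcal{H}^0_Z\big(f_*(f^*\mathcal{F})^{DD}\big)$ vanishes because the middle sheaf is torsion-free and $\codim_X Z\ge 1$, while $\mathcal{H}^1_Z(\mathcal{F})=0$ because $\mathcal{F}$ is $(S_2)$ and $\codim_X Z\ge 2$. Since $\mathcal{Q}$ is supported on $Z$ we have $\mathcal{H}^0_Z(\mathcal{Q})=\mathcal{Q}$, forcing $\mathcal{Q}=0$. (Equivalently, localizing at a generic point $z$ of a component of $Z$: $\depth\mathcal{F}_z\ge 2$ and $\depth\big(f_*(f^*\mathcal{F})^{DD}\big)_z\ge 1$, so the depth inequality for the sequence gives $\depth\mathcal{Q}_z\ge 1$, contradicting that $\mathcal{Q}_z\ne 0$ has finite length.) Therefore $\varphi$ is an isomorphism.

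The adjunction bookkeeping and the $(S_2)$/local-cohomology step are routine; the only place genuine geometric input enters is the codimension-$1$ analysis, which simultaneously uses that $\mathcal{F}$ is locally free near codimension-$1$ points (normality of $X$) and that $f$ is, so to speak, invisible in codimension $1$ (the identity $f_*\O_{X'}=\O_X$), so that is the step I would state most carefully.
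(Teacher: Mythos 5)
Your proposal is correct and follows essentially the same route as the paper: construct the natural map via adjunction and the double-dual map, note injectivity from torsion-freeness, observe the cokernel is supported in codimension $\ge 2$, and kill it using the $(S_2)$ property of $\mathcal{F}$ via local cohomology (equivalently depth). Your explicit codimension-$1$ analysis (freeness of $\mathcal{F}$ at DVR points, $(f^*\mathcal{F})^{DD}=f^*\mathcal{F}$ there, and $f_*\O_{X'}=\O_X$) just spells out what the paper leaves implicit when it asserts that the cokernel has codimension $\ge 2$ support.
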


\begin{proof}
Let us consider the adjunction property
$$ \mathcal{H}om_{\O_X}(\mathcal{F},f_*(f^*\mathcal{F})^{DD})\cong f_*\mathcal{H}om_{\O_{X'}}(f^*\mathcal{F},(f^*\mathcal{F})^{DD})$$
and the map $f^*\mathcal{F}\to (f^*\mathcal{F})^{DD}$. Then we can construct the following map $\mathcal{F}\to f_*(f^*\mathcal{F})^{DD}$. Note that the map is an injection because it is an isomorphism on the regular locus of $X$ and $\mathcal{F},f_*(f^*\mathcal{F})^{DD}$ are torsion-free sheaves.

Let $\mathcal{Q}$ be the cokernel. Then we have the following exact sequence
$$ 0\to \mathcal{F}\to f_*(f^*\mathcal{F})^{DD}\to \mathcal{Q}\to 0.$$
Note that the support of $\mathcal{Q}$ has codimension $\ge 2$. Let $x\in X$ be any point of codimension $\ge 2$. Then the local cohomology exact sequence tells us that $H^0_x(X,\mathcal{Q})=0$. Hence $x$ is not an associated point of $\mathcal{Q}$ (cf. Lemma 0EEZ in \cite{Stacks}). Thus $\mathcal{Q}=0$ by Lemma 05AG in \cite{Stacks}.
\end{proof}

\begin{lem}
Let $X,Y$ be any normal varieties and $p:X'\to X$ any proper flat morphism. Suppose that $\mathcal{F}$ is any coherent sheaf on $X$. Then $(p^*\mathcal{F})^{DD}=p^*(\mathcal{F}^{DD}).$
\end{lem}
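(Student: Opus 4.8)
The plan is to reduce to the case where $\mathcal{F}$ is torsion-free, to show that $p^{*}(\mathcal{F}^{DD})$ is \emph{already} reflexive on $X'$, and then to compare the two reflexive sheaves $(p^{*}\mathcal{F})^{DD}$ and $p^{*}(\mathcal{F}^{DD})$ by checking that the natural morphism between them is an isomorphism in codimension $\le 1$. For the reduction: since $\omega_X$ is torsion-free, the functor $(-)^{D}$ kills the torsion subsheaf $\mathcal{T}$ of $\mathcal{F}$, so $\mathcal{F}^{DD}\cong(\mathcal{F}/\mathcal{T})^{DD}$; and $p$ is dominant (a flat morphism is open), so applying the exact functor $p^{*}$ to $0\to\mathcal{T}\to\mathcal{F}\to\mathcal{F}/\mathcal{T}\to0$ exhibits $p^{*}\mathcal{T}$ as a torsion sheaf (it is supported on the proper closed subset $p^{-1}(\Supp\mathcal{T})$) and $p^{*}(\mathcal{F}/\mathcal{T})$ as torsion-free (a finitely generated torsion-free module over a domain embeds into a free module, a property preserved by flat base change). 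Hence $(p^{*}\mathcal{F})^{DD}\cong(p^{*}(\mathcal{F}/\mathcal{T}))^{DD}$, so we may assume $\mathcal{F}$ is torsion-free, in which case $\mathcal{F}\to\mathcal{F}^{DD}$ is injective and an isomorphism at every point of codimension $\le 1$ (the case $\mathcal{F}=0$ being trivial).

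Next I would show that $p^{*}(\mathcal{F}^{DD})$ is reflexive on $X'$; by \thref{refe} and the definition of reflexivity it suffices to show it is torsion-free and $(S_2)$, which I would do pointwise using that $\O_{X,x}\to\O_{X',x'}$ is a flat local homomorphism whenever $x=p(x')$, so that
$$\depth(p^{*}\mathcal{F}^{DD})_{x'}=\depth(\mathcal{F}^{DD})_{x}+\depth\O_{X'_{x},x'},\qquad \dim\O_{X',x'}=\dim\O_{X,x}+\dim\O_{X'_{x},x'}.$$
Since $X$ and $X'$ are normal, $\mathcal{F}^{DD}$ and $\O_{X,x},\O_{X',x'}$ are $(S_2)$ and a one-dimensional normal local ring is a DVR; a short case distinction on $d:=\dim\O_{X,x}$ then yields $\depth(p^{*}\mathcal{F}^{DD})_{x'}\ge\min\{2,\dim\O_{X',x'}\}$. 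When $d\ge 2$ one already has $\depth(\mathcal{F}^{DD})_{x}\ge 2$; when $d=0$ the fibre term alone suffices because $X'$ is $(S_2)$; and the delicate case is $d=1$ with $\dim\O_{X'_{x},x'}\ge 1$, where $\dim\O_{X',x'}\ge 2$ forces $\depth\O_{X',x'}\ge 2$, hence $\depth\O_{X'_{x},x'}\ge 1$, and combined with $\depth(\mathcal{F}^{DD})_{x}\ge 1$ this gives the required bound. This case analysis is the technical heart of the proof and is where I expect the real work to lie.

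Finally, the comparison. Choose an open $U\subseteq X$ with $\codim_{X}(X\setminus U)\ge 2$ over which $X$ is smooth, $\mathcal{F}^{DD}$ is locally free, and $\mathcal{F}\to\mathcal{F}^{DD}$ is an isomorphism, and put $U'=p^{-1}(U)$. The second displayed formula gives $\codim_{X'}x'\ge\codim_{X}p(x')$ for every $x'$, so $\codim_{X'}(X'\setminus U')\ge 2$, and on $U'$ the sheaves $p^{*}\mathcal{F}$ and $p^{*}(\mathcal{F}^{DD})$ are canonically isomorphic and locally free. Applying $(-)^{DD}$ on $X'$ to the pullback $p^{*}\mathcal{F}\to p^{*}(\mathcal{F}^{DD})$ of the natural map and identifying $(p^{*}(\mathcal{F}^{DD}))^{DD}$ with $p^{*}(\mathcal{F}^{DD})$ via \thref{refe}, one obtains a natural morphism $\gamma\colon(p^{*}\mathcal{F})^{DD}\to p^{*}(\mathcal{F}^{DD})$ of reflexive sheaves whose restriction to $U'$ is an isomorphism. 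Its kernel is a torsion subsheaf of the torsion-free sheaf $(p^{*}\mathcal{F})^{DD}$, hence zero; and if the cokernel $\mathcal{C}$ were nonzero, then at a generic point $x'$ of $\Supp\mathcal{C}$, which satisfies $\codim_{X'}x'\ge 2$, the local cohomology sequence of $0\to(p^{*}\mathcal{F})^{DD}\to p^{*}(\mathcal{F}^{DD})\to\mathcal{C}\to 0$ at $x'$ would give an injection $0\neq\mathcal{C}_{x'}=H^{0}_{x'}(\mathcal{C})\hookrightarrow H^{1}_{x'}((p^{*}\mathcal{F})^{DD})$ (using $H^{0}_{x'}(p^{*}(\mathcal{F}^{DD}))=0$), contradicting $H^{1}_{x'}((p^{*}\mathcal{F})^{DD})=0$; here both vanishings hold because $(p^{*}\mathcal{F})^{DD}$ and $p^{*}(\mathcal{F}^{DD})$ are $(S_2)$ and $\codim_{X'}x'\ge 2$. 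Therefore $\gamma$ is an isomorphism, completing the proof.
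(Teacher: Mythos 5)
Your proof is correct and follows the same skeleton as the paper's: construct the natural map $\gamma:(p^*\mathcal{F})^{DD}\to p^*(\mathcal{F}^{DD})$ from $\mathcal{F}\to\mathcal{F}^{DD}$, observe that both sides are reflexive, check that $\gamma$ is an isomorphism in codimension one, and extend across the codimension-two locus. The difference is one of self-containedness: where the paper simply cites Hartshorne --- Proposition 1.8 of \cite{Har80} for reflexivity of the flat pullback of a reflexive sheaf, and Proposition 1.11 of \cite{HarGor} for the extension step --- you prove both ingredients by hand. Your reflexivity argument via the depth and dimension formulas for flat local homomorphisms (with the case analysis on $\dim\O_{X,x}$, which is correct and is essentially the content of Hartshorne's proposition; note it uses normality of $X'$ as well as of $X$, which is only implicit in the statement) replaces the first citation, and your torsion-kernel plus local-cohomology/associated-point argument for the cokernel replaces the second, being the same device the paper itself uses in the proof of \thref{goo}. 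Your preliminary reduction to torsion-free $\mathcal{F}$ is also a small gain in rigor: the paper's one-line claim that the map is an isomorphism outside a codimension-two subset tacitly uses that $(-)^D$ kills torsion, which your reduction makes explicit. What the paper's route buys is brevity; what yours buys is independence from the two external references (and it makes visible that properness of $p$ is never actually used).
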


\begin{proof}
Note that the pullback of a reflexive sheaf along a flat morphism is also reflexive. See Proposition 1.8 in \cite{Har80}.

Taking the natural map $\mathcal{F}\to \mathcal{F}^{DD}$, we have a map $\varphi:(p^*\mathcal{F})^{DD}\to p^*(\mathcal{F}^{DD})$. We know that both $(p^*\mathcal{F})^{DD},p^*(\mathcal{F}^{DD})$ on $Y$ are reflexive and $\varphi$ is an isomorphism outside codimension $\ge 2$ closed subscheme of $Y$ (see \thref{refe}). Hence by \cite{HarGor}, Proposition 1.11, we have the assertion.
\end{proof}

\section{Rational singularities}
Throughout the remainder of this paper, for simplicity, assume that any reflexive sheaf has rank $1$, unless explicitly mentioned otherwise. Note that the condition of being rank $1$ is only used in the proof of \thref{idefi1} to ensure that 
$$\mathcal{E}xt^i_{\O_{X'}}((f^*\mathcal{F})^{DD},\omega_{X'})=0\text{ for }i>0$$
if we use the notations in \thref{idefi1} below, and hence we believe such condition can be loosened easily.


\begin{proof}[Proof of \thref{idefi1}]
The following proof is inspired by the argument in the proof of Lemma 3.7 in \cite{SchTa}.

Let us construct $\theta_{\mathcal{F},f}$. Consider a composition $\mathcal{F}\to f_*(f^*\mathcal{F})^{DD}\to Rf_*(f^*\mathcal{F})^{DD}$. We may take $R\mathcal{H}om_{\O_X}(-,\omega^{\bullet}_X)$ on the map and it gives
$$ R\mathcal{H}om_{\O_X}(Rf_*(f^*\mathcal{F})^{DD},\omega^{\bullet}_X)\to R\mathcal{H}om_{\O_X}(\mathcal{F},\omega^{\bullet}_X).$$

Now, we may compute the left-hand side by the Grothendieck duality as follows:
$$
\begin{aligned}
R\mathcal{H}om_{\O_X}(Rf_*(f^*\mathcal{F})^{DD},\omega^{\bullet}_X)&=Rf_*R\mathcal{H}om_{\O_{X'}}((f^*\mathcal{F})^{DD},\omega_{X'})[\dim X]
\\ &=Rf_*\mathcal{H}om_{\O_{X'}}((f^*\mathcal{F})^{DD},\omega_{X'})[\dim X]
\\ &=Rf_*(f^*\mathcal{F})^{D}[\dim X],
\end{aligned}
$$
where we used the fact that $(f^*\mathcal{F})^{DD}$ is a line bundle on $X'$ on the second equality. Therefore, we have a map
$$ \theta_{\mathcal{F},f}:Rf_*(f^*\mathcal{F})^D\to R\mathcal{H}om_{\O_X}(\mathcal{F},\omega^{\bullet}_X)[-\dim X].$$

For $(a)\iff (b)$, if $(b)$ is true, then $\mathcal{F}\cong Rf_*(f^*\mathcal{F})^{DD}$. Thus, $\theta_{\mathcal{F},f}$ is a quasi-isomorphism and $(a)$ is true. For the converse, we may take $R\mathcal{H}om_{\O_X}(-,\omega^{\bullet}_X)$ on $\theta_{\mathcal{F},f}$. See 0AU3 (3) in \cite{Stacks}.

For $(a)\iff (c)$, we may take the local duality to $\theta_{\mathcal{F},f}$. Note that the dual of $\theta_{\mathcal{F},f}$ is
$$ R\Gamma_x(X_x,\mathcal{F}_x)\to R\Gamma_x(X_x,Rf_*(f^*\mathcal{F})^{DD})$$
and the right hand side is $R\Gamma_{f^{-1}(\{x\})}(X'_{x},(f^*\mathcal{F})^{DD})$ by the Leray spectral sequence.
\end{proof}


\begin{rmk}\thlabel{comp}
If we use the notions of \thref{idefi1}, then $\mathcal{H}^0(\theta_{\mathcal{F},f})$ is
$$\mathcal{H}^0(\theta_{\mathcal{F},f}):f_*(f^*\mathcal{F})^D\to \mathcal{F}^D=\mathcal{H}om_{\O_X}(\mathcal{F},\omega_X).$$
If we let $\mathcal{F}=\O_X$, then $\mathcal{H}^0(\theta_{\mathcal{F},f})$ is the trace map introduced in \cite{KM}, Proposition 5.77.

Let $f:X'\to X,g:X''\to X'$ be generically finite morphisms between normal varieties, and $\mathcal{F}$ any coherent sheaf on $X$. If $X',X''$ are smooth, then the map 
$$\mathcal{H}^0(\theta_{\mathcal{F},f\circ g}):(f\circ g)_*((f\circ g)^*\mathcal{F})^D\to \mathcal{F}^D$$
can be factored as follows:
$$\begin{tikzcd}
(f\circ g)_*((f\circ g)^*\mathcal{F})^D \ar["\mathcal{H}^0(\theta_{(f^*\mathcal{F})^{DD},g})"]{rr}& & f_*(f^*\mathcal{F})^D \ar["\mathcal{H}^0(\theta_{\mathcal{F},f})"]{rr}& & \mathcal{F}^D.
\end{tikzcd}
$$
Indeed, let $u:(f^*\mathcal{F})^{DD}\to Rg_*((f\circ g)^*\mathcal{F})^{DD}$ be the natural map. Then by definition,
$$\theta_{g,(f^*\mathcal{F})^{DD}}=R\mathcal{H}om_{\O_{X'}}(u,\omega^{\bullet}_{X'})[-\dim X].$$ If we consider
$$ \mathcal{F}\to Rf_*(f^*\mathcal{F})^{DD}\overset{Rf_*(u)}{\to} R(f\circ g)_*((f\circ g)^*\mathcal{F})^{DD},$$
by taking $R\mathcal{H}om_{\O_X}(-,\omega^{\bullet}_X)$ on that, then we have the maps
\begin{equation}\label{gr}
R(f\circ g)_*((f\circ g)^*\mathcal{F})^D\overset{\alpha}{\to} Rf_*(f^*\mathcal{F})^D\to \mathcal{H}om_{\O_X}(\mathcal{F},\omega^{\bullet}_X)[-\dim X].
\end{equation}
If we use the Grothendieck duality, then we obtain
$$\alpha=Rf_*\circ R\mathcal{H}om_{\O_{X'}}(u,\omega^{\bullet}_{X'})[-\dim X]=Rf_*\circ \theta_{g,(f^*\mathcal{F})^{DD}}.$$
If we apply $\mathcal{H}^0$ on \eqref{gr}, then we have the assertion.
\end{rmk}

\begin{prop}\thlabel{tr}
Let $X$ be any normal variety and $\mathcal{F}$ any reflexive sheaf of rank $1$ on $X$ which has weakly rational singularities. Then the following are equivalent:
\begin{itemize}
    \item[(a)] $\mathcal{F}$ is $(S_{q+1})$.
    \item[(b)] For some weakly rational resolution $f:X'\to X$, the support of $R^if_*(f^*\mathcal{F})^D$ has codimension $\ge i+q+1$ for any $i\ge 1$.
    \item[(c)] The support of $R^if_*(f^*\mathcal{F})^D$ has codimension $\ge i+q+1$ for any $i\ge 1$ and any weakly rational resolution $f:X'\to X$ of $X$.
\end{itemize}
\end{prop}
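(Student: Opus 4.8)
The plan is to reduce the whole statement to a single support computation, after which the distinction between ``for some'' and ``for any'' weakly rational resolution becomes a formality. Fix a weakly rational resolution $f\colon X'\to X$ of $\mathcal F$ (one exists by hypothesis). The step I would establish first is the equivalence
\begin{equation*}
x\in\Supp R^if_*(f^*\mathcal F)^D \quad\Longleftrightarrow\quad H^{c-i}_x(X,\mathcal F)\neq 0
\end{equation*}
for every point $x\in X$ with $c:=\codim_X x$ and every $i\ge 0$. To prove it I would use that $X'$, being smooth, is Gorenstein, that $(f^*\mathcal F)^D$ and $(f^*\mathcal F)^{DD}$ are line bundles on $X'$, and that by definition $(f^*\mathcal F)^{DD}=\mathcal{H}om_{\O_{X'}}((f^*\mathcal F)^D,\omega_{X'})=((f^*\mathcal F)^D)^\vee\otimes\omega_{X'}$, so in particular $\omega_{X'}\otimes((f^*\mathcal F)^D)^\vee\cong(f^*\mathcal F)^{DD}$. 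Then \thref{warmup1}, applied to the vector bundle $\mathcal E=(f^*\mathcal F)^D$, gives $(R^if_*(f^*\mathcal F)^D)_x=0$ if and only if $H^{c-i}_{f^{-1}(x)}(X',(f^*\mathcal F)^{DD})=0$; and since $f$ is weakly rational, part (c) of \thref{idefi1} identifies the latter local cohomology with $H^{c-i}_x(X,\mathcal F)$ (for $c-i<0$ both sides vanish trivially, so nothing extra is needed).

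Granting this equivalence, the implication (b)$\Leftrightarrow$(c) is immediate, because it shows that $\Supp R^if_*(f^*\mathcal F)^D$ for $i\ge 1$ is the same for every weakly rational resolution. (One could also argue directly: \thref{idefi1}(a) gives $Rf_*(f^*\mathcal F)^D\cong R\mathcal{H}om_{\O_X}(\mathcal F,\omega_X^{\bullet})[-\dim X]$ for every weakly rational $f$, so even the sheaves $R^if_*(f^*\mathcal F)^D$ are canonically intrinsic to the pair $(X,\mathcal F)$.)

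For (a)$\Leftrightarrow$(b) I would just chase indices. Write $\mathcal G_i:=R^if_*(f^*\mathcal F)^D$, so the equivalence above reads $H^i_x(X,\mathcal F)\neq 0\Leftrightarrow x\in\Supp\mathcal G_{c-i}$ with $c=\codim_X x$. For (b)$\Rightarrow$(a): given $x$ and $0\le i<\min\{q+1,c\}$, set $j:=c-i$, which is $\ge 1$ since $i<c$; if $H^i_x(X,\mathcal F)\neq 0$ then $x\in\Supp\mathcal G_j$, and (b) forces $c\ge j+q+1=(c-i)+q+1$, i.e.\ $i\ge q+1$, contradicting $i<q+1$. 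Hence $H^i_x(X,\mathcal F)=0$ and $\mathcal F$ is $(S_{q+1})$. For (a)$\Rightarrow$(b): given $j\ge 1$ and $x\in\Supp\mathcal G_j$ with $c=\codim_X x$, the equivalence gives $H^{c-j}_x(X,\mathcal F)\neq 0$; if $c<j+q+1$ then $c-j<q+1$, and also $c-j<c$ because $j\ge 1$, so $c-j<\min\{q+1,c\}$ and the $(S_{q+1})$ property would give $H^{c-j}_x(X,\mathcal F)=0$ — a contradiction. Therefore $\codim_X x\ge j+q+1$ for every $x\in\Supp\mathcal G_j$, i.e.\ $\codim_X\Supp\mathcal G_j\ge j+q+1$, which is (b).

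The only genuine obstacle is the first equivalence, and even there the difficulty is purely bookkeeping of cohomological shifts: one must apply \thref{warmup1} with exactly the right twist and verify the identifications $(f^*\mathcal F)^D=(f^*\mathcal F)^\vee\otimes\omega_{X'}$ and $(f^*\mathcal F)^{DD}=((f^*\mathcal F)^D)^\vee\otimes\omega_{X'}$ on the smooth model, both of which are immediate since $\omega_{X'}$ is invertible. Once that equivalence is in hand, the rest is elementary, and the a priori gap between ``for some'' and ``for any'' weakly rational resolution evaporates.
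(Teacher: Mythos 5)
Your proof is correct and is essentially the paper's argument in a different packaging: the paper applies local duality directly to the quasi-isomorphism $\theta_{\mathcal{F},f}$ of \thref{idefi1}(a), while you route the same duality through \thref{warmup1} with $\mathcal{E}=(f^*\mathcal{F})^D$ together with \thref{idefi1}(c), which amounts to the same computation. Your pointwise support equivalence also cleanly handles (b)$\iff$(c), which in the paper is obtained by proving (a)$\implies$(c) for every weakly rational resolution.
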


\begin{proof}
For $(a)\implies (c)$, given any weakly rational resolution $f:X'\to X$, by our assumption, there is an isomorphism 
$$\theta_{\mathcal{F},f}:Rf_*(f^*\mathcal{F})^D\cong R\mathcal{H}om_{\O_X}(\mathcal{F},\omega^{\bullet}_X)[-\dim X].$$
Moreover, for any point $x\in X$ with $\codim_X x\le i+q$, $H^{\codim_X x-i}_x(X,\mathcal{F})=0$ holds, because $\mathcal{F}$ is $(S_{q+1})$. Hence, by the local duality, $\Ext^{-\codim_X x+i}_{\O_{X,x}}(\mathcal{F}_x,\omega^{\bullet}_{X,x})=0$ for such $i$. Thus, $R^if_*(f^*\mathcal{F})^D_x=0$ and the support of $R^if_*(f^*\mathcal{F})^D$ does not contain $x$. 

$(c)\implies (b)$ is trivial.

For $(b)\implies (a)$, consider a point $x\in X$. If $\codim_X x\le q$, then $R^if_*(f^*\mathcal{F})^D_x=0$ and $\Ext^{-\codim_X x+i}_{\O_{X,x}}(\mathcal{F}_x,\omega^{\bullet}_{X,x})=0$ hold for any $i\ge 1$. Therefore, the local duality gives 
$$H^{\codim_X x-i}_x(X,\mathcal{F})=0\text{ for any }i\ge 1.$$

If $\codim_X x\ge q+1$ and $R^if_*(f^*\mathcal{F})^D_x=0$, by the local duality, $H^{\codim_X x-i}_x(X,\mathcal{F})=0$. Furthermore, if $\codim_X x-i<q$, then $R^if_*(f^*\mathcal{F})^D_x=0$. Hence, for $0\le j<q$, $H^j_x(X,\mathcal{F})=0$. 
\end{proof}



Now, we can define the notion of \emph{rational singularities} for any reflexive sheaf of rank $1$.

\begin{ex}\thlabel{ex}
Having weakly rational singularities does not ensure having rational singularities.

For example, take $X:=\mathrm{Spec}\;k[x,y,z,w]/(xy-zw)$, $A:=\{x=z=0\}$, the blowup $f:X'\to X$ along $A$ and the exceptional divisor $E$ on $X'$. We know that $f$ is a small resolution and $(f^*\O_X(-mA))^{DD}=\O_{X'}(-mE)$. Then for $m\gg 0$, $R^if_*(f^*\O_{X}(-mA))^{DD}=0$ for $i\ge 1$. Thus, $-mA$ has weakly rational singularities for such $m$. However, for $m\ge 2$, $-mA$ is not CM as in (3.15) in \cite{Kollar2} and hence $-mA$ does not have rational singularities.
\end{ex}

Let us prove the following lemma. Similar statements are proved in Lemma 3.2 in \cite{Alex} and Theorem 7.1.1 in \cite{Fujino}.

\begin{lem}\thlabel{twospectral}
Let $f:X'\to X$ be any proper birational morphism of varieties and $\mathcal{F},\mathcal{F}'$ any coherent sheaves on $X'$. For any given positive integer $n\ge 1$, suppose that the three conditions hold:
\begin{itemize}
\item[(1)] There is an injection $\imath:\mathcal{F}\to \mathcal{F'}$,
\item[(2)] $\imath$ induces an isomorphism $f_*\mathcal{F}\cong f_*\mathcal{F}'$, and
\item[(3)] $R^if_*\mathcal{F}'=0$ for $1\le i<n$.
\end{itemize}
Then we have $R^if_*\mathcal{F}=0$ for $1\le i<n$.
\end{lem}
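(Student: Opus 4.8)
The plan is to reduce everything to the cokernel of $\imath$. By (1) we have a short exact sequence $0 \to \mathcal{F} \to \mathcal{F}' \to \mathcal{Q} \to 0$ of coherent sheaves on $X'$, where $\mathcal{Q} := \operatorname{coker}(\imath)$. Applying $Rf_*$ yields the long exact sequence
$$ \cdots \to R^{i-1}f_*\mathcal{Q} \to R^i f_*\mathcal{F} \to R^i f_*\mathcal{F}' \to R^i f_*\mathcal{Q} \to \cdots .$$
Hypothesis (2) says $f_*\imath$ is an isomorphism, so in this sequence the map $f_*\mathcal{F}' \to f_*\mathcal{Q}$ is zero; combined with (3) the sequence degenerates and one reads off isomorphisms $R^i f_*\mathcal{F} \cong R^{i-1}f_*\mathcal{Q}$ for every $1 \le i < n$ (for $i \ge 2$ because $R^{i-1}f_*\mathcal{F}' = 0$, and for $i = 1$ because $f_*\mathcal{F}' \to f_*\mathcal{Q}$ vanishes). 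Thus the assertion is equivalent to showing that $R^j f_*\mathcal{Q} = 0$ for all $0 \le j \le n-2$.

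To prove this I would induct on $n$. The case $n = 1$ is vacuous. For the inductive step, hypothesis (3) for $n$ implies (3) for $n-1$, so the induction hypothesis already gives $R^i f_*\mathcal{F} = 0$ for $1 \le i \le n-2$; only $R^{n-1}f_*\mathcal{F} = 0$, equivalently $R^{n-2}f_*\mathcal{Q} = 0$, remains. To extract this I would bring in a second exact sequence built from the adjunction counit: set $\mathcal{N} := \operatorname{im}(f^* f_*\mathcal{F}' \to \mathcal{F}')$. Naturality of the counit together with the isomorphism $f_*\mathcal{F} \cong f_*\mathcal{F}'$ gives $\mathcal{N} \subseteq \mathcal{F} \subseteq \mathcal{F}'$ and $f_*\mathcal{N} \cong f_*\mathcal{F} \cong f_*\mathcal{F}'$, with $\mathcal{N}$ generated, in a suitable sense, by data pulled back from $X$. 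One then has the three short exact sequences coming from the filtration $\mathcal{N} \subseteq \mathcal{F} \subseteq \mathcal{F}'$, and playing their long exact sequences (the two interlocking exact sequences alluded to in the statement) off against one another should compare $R^\bullet f_*$ of $\mathcal{Q} = \mathcal{F}'/\mathcal{F}$ with those of $\mathcal{F}'/\mathcal{N}$ and $\mathcal{F}/\mathcal{N}$ and close the induction.

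The hard part will be precisely this last step. The long-exact-sequence bookkeeping on its own is purely formal: it converts vanishing of $R^i f_*\mathcal{F}$ into vanishing of $R^{i-1}f_*\mathcal{Q}$ and back, but never manufactures a vanishing out of nothing. What must genuinely be extracted from (2) is that the cokernel is invisible to the \emph{higher} direct images and not merely to $f_*$, and the leverage for that has to come from something like the sheaf $\mathcal{N}$ above (or, in a more hands-on form, a Noetherian induction on $\Supp\mathcal{Q}$, stripping off one generic point of the support at a time and using that $\mathcal{F},\mathcal{F}'$ agree away from it). Showing that $\mathcal{N}$ (or the intermediate quotients) inherits the required vanishing — in effect, that $R^{n-2}f_*\mathcal{Q}$ cannot persist as a nonzero sheaf supported along the image of the exceptional locus — is the crux, and it is where I expect essentially all of the work of the proof to concentrate.
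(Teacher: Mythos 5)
Your opening reduction is correct and already diverges from the paper's route: from $0\to\mathcal{F}\to\mathcal{F}'\to\mathcal{Q}\to 0$ together with (2) and (3) you rightly get $R^if_*\mathcal{F}\cong R^{i-1}f_*\mathcal{Q}$ for $1\le i<n$, so the lemma becomes the assertion that $R^jf_*\mathcal{Q}=0$ for $0\le j\le n-2$. (The paper never introduces $\mathcal{Q}$; it compares the two local-cohomology spectral sequences $H^s_x(X,R^tf_*\mathcal{F})\Rightarrow H^{s+t}_{f^{-1}(x)}(X',\mathcal{F})$ and its analogue for $\mathcal{F}'$, proves injectivity of the edge maps $H^i_x(X,f_*\mathcal{F})\to H^i_{f^{-1}(x)}(X',\mathcal{F})$ for $i\le n$, and finishes with an associated-point induction.) However, your proposal stops exactly where the content should begin: the vanishing of $f_*\mathcal{Q}$ and of the higher $R^jf_*\mathcal{Q}$ is only ``expected'' to follow from the filtration $\mathcal{N}\subseteq\mathcal{F}\subseteq\mathcal{F}'$ with $\mathcal{N}=\operatorname{im}(f^*f_*\mathcal{F}'\to\mathcal{F}')$, and no argument is given even for the first case $f_*\mathcal{Q}=0$. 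That is a genuine gap, not a loose end.

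Moreover, that gap cannot be closed from hypotheses (1)--(3) alone, and your own reduction makes this visible. Take $f:X'\to X$ the blowup of a smooth surface at a point $x_0$, with exceptional $(-1)$-curve $E\cong\P^1$, fix $z\in E$, and set $\mathcal{F}'=\O_{X'}(E)$, $\mathcal{F}=\mathcal{I}_z\cdot\O_{X'}(E)$, $n=2$. From $0\to\O_{X'}\to\O_{X'}(E)\to\O_E(E)\cong\O_{\P^1}(-1)\to0$ one gets $R^1f_*\O_{X'}(E)=0$, so (3) holds; since $H^0(E,\O_E(E))=0$, every section of $\O_{X'}(E)$ over $f^{-1}(U)$ is $g\cdot s_E$ with $g$ regular and $s_E$ the canonical section vanishing along $E$, hence lies in $\mathcal{I}_E\cdot\O_{X'}(E)\subseteq\mathcal{I}_z\cdot\O_{X'}(E)=\mathcal{F}$, so $f_*\mathcal{F}\to f_*\mathcal{F}'$ is an isomorphism (both equal $\O_X$) and (2) holds. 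Here $\mathcal{Q}=k(z)$ and your isomorphism gives $R^1f_*\mathcal{F}\cong f_*\mathcal{Q}\cong k(x_0)\ne 0$, contradicting the asserted conclusion; in particular the $\mathcal{N}$-strategy cannot succeed, since in this example $\mathcal{N}=\O_{X'}\cdot s_E$ does sit inside $\mathcal{F}$ and all hypotheses are met. So your instinct that the long-exact-sequence bookkeeping ``never manufactures a vanishing'' is exactly right: some additional hypothesis forcing $f_*\mathcal{Q}=0$ (or ruling out quotients like the one above) is needed before any proof can exist. For comparison, the paper's proof meets the same obstruction at its key step, where the left-exactness of the displayed sequence $0\to{}^1E^{0n'}_2\to{}^1E^{(n'+1)0}_2\to{}^1E^{n'+1}$ is asserted but is precisely the vanishing you were unable to extract.
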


\begin{proof}
Consider any point $x\in X$. We have the following two spectral sequences
$$ \begin{aligned}&{}^1E^{st}_2=H^s_x(X,R^tf_*\mathcal{F})\implies H^{i+j}_{f^{-1}(x)}(X',\mathcal{F}),
 	\\ &{}^2E^{st}_2=H^s_x(X,R^tf_*\mathcal{F}')\implies H^{i+j}_{f^{-1}(x)}(X',\mathcal{F'}).
 \end{aligned}
$$
From (3), we deduce ${}^2E^{i0}_2={}^2E^{i0}_{\infty}$, making the edge map ${}^2E^{i0}_2\to {}^2E^i$ injective for all $0\le i\le n$. Using (1), we have the following diagram:
$$
\begin{tikzcd}
{}^1E^{i}=H^i_{f^{-1}(x)}(X',\mathcal{F}) \ar{r}& {}^2E^i=H^i_{f^{-1}(x)}(X',\mathcal{F}') \\
{}^1E^{i0}_2=H^i_x(X,f_*\mathcal{F}) \ar["\gamma_i"]{r}\ar["\alpha_i"]{u}& {}^2E^{i0}_2=H^i_x(X,f_*\mathcal{F}').\ar["\beta_i"]{u}
\end{tikzcd}
$$
Given (2), for any $i$, $\gamma_i$ is an isomorphism. Since $\beta_i$ is an injection for $1\le i\le n$, $\alpha_i$ is also an injection for such $i$.

Now, we may use induction. Indeed, suppose that for a positive integer $1<n'\le n$, $R^if_*\mathcal{F}=0$ for $1\le i<n'$. Our goal is to show $R^{n'}f_*\mathcal{F}=0$. By the induction hypothesis, we have ${}^1E^{0n'}_2={}^1E^{0n'}_n$ and ${}^1E^{(n'+1)0}_2={}^1E^{(n'+1)0}_{n}$. Hence, there is an exact sequence
$$ 0\to {}^1E^{0n'}_2\to {}^1E^{(n'+1)0}_2\overset{\beta_{n'+1}}{\to} {}^1E^{n'+1}.$$
Since $\beta_{n'+1}$ is an isomorphism, we obtain $H^0_x(X,R^{n'+1}f_*\mathcal{F})={}^1E^{(n'+1)0}_2=0$ and hence $R^{n'+1}f_*\mathcal{F}$ does not have (the arbitrary point) $x\in X$ as an associated point. Thus $R^{n'+1}f_*\mathcal{F}=0$ and we have the assertion.
\end{proof}

Now, we have an alternative description of the notion of rational singularities.

\begin{prop}\thlabel{stronger}
Let $X$ be any normal variety and $\mathcal{F}$ any reflexive sheaf of rank $1$ on $X$. Then $\mathcal{F}$ has rational singularities if and only if for a resolution $f:X'\to X$, there is a line bundle $\mathcal{L}_{\mathcal{F},f}$ on $X'$ such that the three following conditions hold:
\begin{itemize}
\item[(i)] $f_*\mathcal{L}_{\mathcal{F},f}=\mathcal{F}$,
\item[(ii)] $R^if_*\mathcal{L}_{\mathcal{F},f}=0$ for $i\ge 1$, and	
\item[(iii)] $R^if_*\mathcal{L}^D_{\mathcal{F},f}=0$ for $i\ge 1$.
\end{itemize} 
\end{prop}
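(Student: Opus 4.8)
The plan is to prove the two implications separately, using \thref{twospectral} (together with criterion (b) of \thref{idefi1}) for the ``weakly rational'' half and Grothendieck duality for the ``Cohen--Macaulay'' half.

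For the forward direction, assume $\mathcal{F}$ has rational singularities and fix a weakly rational resolution $f\colon X'\to X$ of $\mathcal{F}$, which exists by hypothesis. I would take $\mathcal{L}_{\mathcal{F},f}:=(f^*\mathcal{F})^{DD}$, which is a line bundle since $X'$ is smooth. Then (i) is \thref{goo} and (ii) is condition (b) of \thref{idefi1}. For (iii), note $\mathcal{L}_{\mathcal{F},f}^D=(f^*\mathcal{F})^{DDD}=(f^*\mathcal{F})^D$, so it suffices to show $R^if_*(f^*\mathcal{F})^D=0$ for $i\ge1$. Since $f$ is weakly rational, $\theta_{\mathcal{F},f}$ is a quasi-isomorphism, so $Rf_*(f^*\mathcal{F})^D\cong R\mathcal{H}om_{\O_X}(\mathcal{F},\omega^\bullet_X)[-\dim X]$; and since $\mathcal{F}$ is CM, the right-hand side is concentrated in degree $0$ (by local duality; equivalently, by \thref{warmup1} applied to $(f^*\mathcal{F})^D$ together with criterion (c) of \thref{idefi1}), which gives the desired vanishing.

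For the converse, suppose $f\colon X'\to X$ is a resolution and $\mathcal{L}:=\mathcal{L}_{\mathcal{F},f}$ a line bundle satisfying (i)--(iii). First I would show $\mathcal{F}$ has weakly rational singularities, with witness $f$. Using the isomorphism $f_*\mathcal{L}\cong\mathcal{F}$ of (i), the counit $f^*\mathcal{F}=f^*f_*\mathcal{L}\to\mathcal{L}$ factors, since $\mathcal{L}$ is reflexive, through a map $\bar\epsilon\colon(f^*\mathcal{F})^{DD}\to\mathcal{L}$; applying $f_*$ and combining \thref{goo} with the triangle identity of the adjunction $(f^*,f_*)$ shows that $f_*\bar\epsilon$ is a surjective, hence bijective, endomorphism of $\mathcal{F}$, and in particular $\bar\epsilon$ is injective. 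Then \thref{twospectral} applied to $\bar\epsilon\colon(f^*\mathcal{F})^{DD}\hookrightarrow\mathcal{L}$ and hypothesis (ii) yields $R^if_*(f^*\mathcal{F})^{DD}=0$ for all $i\ge1$, so $f$ is a weakly rational resolution by criterion (b) of \thref{idefi1}. Second I would show $\mathcal{F}$ is CM. Since $X'$ is smooth, $R\mathcal{H}om_{\O_{X'}}(\mathcal{L},\omega^\bullet_{X'})=\mathcal{L}^D[\dim X]$, so Grothendieck duality gives $R\mathcal{H}om_{\O_X}(Rf_*\mathcal{L},\omega^\bullet_X)\cong (Rf_*\mathcal{L}^D)[\dim X]$. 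Plugging in $Rf_*\mathcal{L}=\mathcal{F}$ (from (i) and (ii)) and $Rf_*\mathcal{L}^D=f_*\mathcal{L}^D$ (from (iii)), we obtain $R\mathcal{H}om_{\O_X}(\mathcal{F},\omega^\bullet_X)[-\dim X]\cong f_*\mathcal{L}^D$, a sheaf concentrated in degree $0$; since $\theta_{\mathcal{F},f}$ is a quasi-isomorphism by the first step, $Rf_*(f^*\mathcal{F})^D$ is concentrated in degree $0$, i.e. $R^if_*(f^*\mathcal{F})^D=0$ for $i\ge1$, and local duality then forces $\mathcal{F}$ to be CM. Combining the two steps, $\mathcal{F}$ has rational singularities.

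The routine-but-delicate point is the construction of $\bar\epsilon$ and the verification that $f_*\bar\epsilon$ is an isomorphism: $f$ need not be flat, so $f^*\mathcal{F}$ may carry torsion along the exceptional locus, and one has to check that the counit still factors through the double dual and that its pushforward is generically an isomorphism --- exactly the sort of bookkeeping handled by the reflexive-sheaf lemmas of Section~3 that are used ``without mention''. The one genuinely substantive observation is that hypothesis (iii) is tailored precisely so that the Grothendieck dual of $Rf_*\mathcal{L}$ collapses onto a single sheaf, which is what records Cohen--Macaulayness; hypotheses (i) and (ii) by themselves only say that $\mathcal{L}$ computes $\mathcal{F}$ with no higher cohomology. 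I do not expect a deep obstacle beyond keeping the shifts by $[\pm\dim X]$ and the local-duality argument consistent.
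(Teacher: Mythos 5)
Your proposal is correct and follows essentially the same route as the paper's proof: take $\mathcal{L}_{\mathcal{F},f}=(f^*\mathcal{F})^{DD}$ and use \thref{goo} and \thref{idefi1} for the forward direction, and the double-dualized counit $(f^*\mathcal{F})^{DD}\hookrightarrow\mathcal{L}_{\mathcal{F},f}$ together with \thref{twospectral} to get weak rationality in the converse. The only cosmetic difference is that you obtain CM-ness by applying global Grothendieck duality to $Rf_*\mathcal{L}_{\mathcal{F},f}$ and then local duality, whereas the paper runs the same duality pointwise via \thref{warmup1} and the Leray spectral sequence; your verification of condition (2) of \thref{twospectral} via the triangle identity is a slightly more explicit version of what the paper leaves implicit.
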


\begin{proof}
For the only if direction, set $\mathcal{L}_{\mathcal{F},f}:=(f^*\mathcal{F})^{DD}$. Now, apply \thref{goo} and \thref{idefi1}.

For the if direction, let us consider the counit map $f^*f_*\mathcal{L}_{\mathcal{F},f}\to \mathcal{L}_{\mathcal{F},f}$. By taking double dual, we obtain an injection $(f^*f_*\mathcal{L}_{\mathcal{F},f})^{DD}\to \mathcal{L}_{\mathcal{F},f}.$ Since (i) holds, $(f^*f_*\mathcal{L}_{\mathcal{F},f})^{DD}=(f^*\mathcal{F})^{DD}$. Thus, there is an injection $(f^*\mathcal{F})^{DD}\to \mathcal{L}_{\mathcal{F},f}$.

Considering \thref{goo,} \thref{twospectral}, (i) and (ii), we have $R^if_*(f^*\mathcal{F})^{DD}=0$ for $i\ge 1$. Let us use the notations in \thref{twospectral}. By (iii) and \thref{warmup1}, we obtain
$$ H^i_{f^{-1}(x)}(X',\mathcal{L}_{\mathcal{F},f})=0\text{ for }i<\codim_X x.$$
Since $\beta_i$ is an isomorphism for $i\ge 1$, we deduce
$$H^i_x(X,\mathcal{F})=H^i_{f^{-1}(x)}(X',\mathcal{L}_{\mathcal{F},f})=0\text{ for }i<\codim_X x$$
and hence $\mathcal{F}$ is CM. Thus $\mathcal{F}$ has rational singularities.
\end{proof}

Now, we can see that our notion of rational singularities is weaker than the notion of rational singularities in \cite{Kovrational} and \cite{Kollar2}.

\begin{coro}\thlabel{pleaseseeme}
Let $X$ be a normal variety over a characteristic $0$ field, and $D$ a reduced Weil divisor on $X$. If $(X,D)$ has rational singularities in the sense of \cite{Kovrational}, then $-D$ has rational singularities.
\end{coro}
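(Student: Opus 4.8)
The plan is to verify the characterisation of rational singularities supplied by \thref{stronger}. I would choose a log resolution $f:(X',D_{X'})\to (X,D)$ as in the definition of a rational pair, so that the natural map $\O_X(-D)\to f_*\O_{X'}(-D_{X'})$ is an isomorphism and $R^if_*\O_{X'}(-D_{X'})=0$ for $i>0$, and take $\mathcal L:=\O_{X'}(-D_{X'})$, a line bundle on the smooth variety $X'$, as the line bundle $\mathcal L_{\mathcal F,f}$ of \thref{stronger} for $\mathcal F=\O_X(-D)$. Conditions (i) $f_*\mathcal L=\O_X(-D)$ and (ii) $R^if_*\mathcal L=0$ $(i\ge1)$ of \thref{stronger} are precisely the two conditions defining a rational pair, so everything comes down to condition (iii), $R^if_*\mathcal L^D=0$ for $i\ge1$. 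Since $X'$ is Gorenstein, $\mathcal L^D=\mathcal{H}om_{\O_{X'}}(\O_{X'}(-D_{X'}),\omega_{X'})=\omega_{X'}(D_{X'})$, so the task is to prove $R^if_*\big(\omega_{X'}(D_{X'})\big)=0$ for all $i\ge 1$.

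To obtain this I would tensor the structure sequence $0\to\O_{X'}(-D_{X'})\to\O_{X'}\to\O_{D_{X'}}\to0$ by the line bundle $\omega_{X'}(D_{X'})$: as the reduced simple normal crossing divisor $D_{X'}$ is a local complete intersection, hence Gorenstein, adjunction identifies the last term and gives $0\to\omega_{X'}\to\omega_{X'}(D_{X'})\to\omega_{D_{X'}}\to0$. Applying $Rf_*$ and using Grauert--Riemenschneider vanishing $R^if_*\omega_{X'}=0$ $(i>0)$ in characteristic $0$, the long exact sequence reduces the required vanishing to $R^if_*\omega_{D_{X'}}=0$ $(i>0)$ for the simple normal crossing scheme $D_{X'}$ over $D$. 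Granting this, \thref{stronger} yields that $\O_X(-D)$, equivalently the Weil divisor $-D$, has rational singularities.

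The vanishing $R^if_*\omega_{D_{X'}}=0$ $(i>0)$ is the main obstacle, and it is \emph{not} a formal consequence of $D_{X'}$ being simple normal crossing: a log resolution may contract strata of $D_{X'}$, and then the twisted dualising sheaves occurring in the Mayer--Vietoris filtration of $\omega_{D_{X'}}$ can have nonvanishing higher direct images, so this is exactly where the rational-pair hypothesis has to be used. I would prove it by filtering $\omega_{D_{X'}}$ through the dualising sheaves of the smooth strata $D_{X'}^{[p]}$ and applying Grauert--Riemenschneider/Koll{\'a}r vanishing on each stratum (the rational-pair condition governing how these strata map to $X$), or simply quote the corresponding statement from \cite{Kovrational}. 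One can also bypass \thref{stronger}: weak rationality of $\O_X(-D)$ follows from \thref{goo} and \thref{twospectral} applied to the injection $(f^*\O_X(-D))^{DD}\hookrightarrow\mathcal L$ (using that $f$ is birational, so $R^if_*$ of a coherent sheaf vanishes for $i\ge\dim X$), while Cohen--Macaulayness follows from \thref{warmup1} applied to $\mathcal E=\omega_{X'}(D_{X'})$ together with $Rf_*\O_{X'}(-D_{X'})\cong\O_X(-D)$ --- but this still rests on the same vanishing.
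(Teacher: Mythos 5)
Your proposal is correct and follows essentially the same route as the paper: take the log resolution from the rational-pair hypothesis, set $\mathcal{L}_{\O_X(-D),f}:=\O_{X'}(-D_{X'})$, note that conditions (i) and (ii) of \thref{stronger} are exactly the defining conditions of the rational pair, and get the remaining vanishing (iii), $R^if_*\omega_{X'}(D_{X'})=0$, from \cite{Kovrational} --- the paper simply cites Theorem 2.9 there (CM-ness of $\O_X(-D)$ plus Grothendieck/local duality), which is the fallback you yourself offer. Your alternative residue-sequence/strata filtration argument for (iii) is therefore unnecessary, and you correctly identify it as the delicate point rather than claiming it is formal.
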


\begin{proof}
Let $f:(X',D_{X'})\to (X,D)$ be a log resolution of $(X,D)$. Then we may put $\mathcal{L}_{\O_X(-D),f}:=\O_{X'}(-D_{X'})$ and use \thref{stronger}. Now, use Theorem 2.9 in \cite{Kovrational}.
\end{proof}

Before proving that any $\Q$-Cartier Weil divisor on a dlt pair $(X,\Delta)$ over a characteristic $0$ field has rational singularities (see \thref{iex1}), let us prove the following lemma.

\begin{lem}\thlabel{areyouwrong}
Let $X$ be a normal variety, $f:X'\to X$ a resolution and $D$ a $\Q$-Cartier Weil divisor on $X$. For any Cartier divisor $L$ on $X'$ such that $\O_{X'}(L)=(f^*\O_X(D))^{DD}$, there is an effective $f$-exceptional divisor $E$ such that $L+E\sim_{\Q}f^*D$.
\end{lem}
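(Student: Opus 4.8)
The plan is to work one prime divisor at a time and compare the coefficients of $L$ and of a chosen $\Q$-Cartier pullback of $D$. First I would fix a positive integer $r$ such that $rD$ is Cartier, so that $f^*\O_X(rD)$ is an honest line bundle and $r\cdot f^*D$ makes sense as a Cartier divisor on $X'$; then $\frac1r\,(r\cdot f^*D)=:f^*D$ is a well-defined $\Q$-Cartier $\Q$-divisor on $X'$. Set $E:=f^*D-L$, which is a priori only a $\Q$-divisor; the claim is precisely that $E$ is an effective $f$-exceptional $\Q$-divisor. That $E$ is $f$-exceptional is the easy half: on the open set $U\subseteq X$ where $f$ is an isomorphism and $X$ is regular, $(f^*\O_X(D))^{DD}$ restricts to $f^*\O_X(D)$ (double dual is harmless on a regular scheme, or just because $\O_X(D)|_U$ is already a line bundle since $D$ is $\Q$-Cartier, hence $\Z$-Cartier in codimension one on a normal variety), so $L|_{f^{-1}(U)}$ and $f^*D|_{f^{-1}(U)}$ agree; therefore $E$ is supported on the exceptional locus and, being a divisor, on the $f$-exceptional prime divisors.

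For effectivity I would argue valuation by valuation. Let $F\subseteq X'$ be an $f$-exceptional prime divisor with generic point $\xi$, and let $v=\ord_F$ be the corresponding divisorial valuation on the function field $k(X)=k(X')$. On one hand, the coefficient of $F$ in $f^*D$ is $v(f^*D)=v(D)$ where we interpret the right-hand side via any local equation: choosing a local generator $\varphi$ of the reflexive sheaf $\O_X(D)$ near $f(\xi)$ — equivalently, writing $rD=\div(\psi)$ locally for some rational function $\psi$ and dividing by $r$ — one has $\mathrm{coeff}_F(f^*D)=\tfrac1r v(\psi)$. On the other hand, $\O_{X'}(L)=(f^*\O_X(D))^{DD}$ is generated near $\xi$ by the same section $\varphi$ (pulled back), so the coefficient of $F$ in $L$ equals $\min$ — more precisely, $(f^*\O_X(D))^{DD}$ is the reflexive hull of the image of $f^*\O_X(D)$, and since $\O_X(D)=\O_X(D)$ is $(S_2)$ with $\O_X(rD)$ locally principal generated by $\psi^{-1/r}\cdot(\text{unit})$, the stalk of $(f^*\O_X(D))^{DD}$ at $\xi$ is $\{g\in k(X'):v(g)\ge -v(D)\}$. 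Hence $\mathrm{coeff}_F(L)=v(D)=\mathrm{coeff}_F(f^*D)$ would give $E=0$ — which is wrong, so the inequality must be examined more carefully: $(f^*\O_X(D))^{DD}$ is the saturation in codimension one inside $X'$, and the relevant comparison is that the map $f^*\O_X(D)\to (f^*\O_X(D))^{DD}$ can only \emph{enlarge} the sheaf, so $\O_{X'}(L)\supseteq \O_{X'}(\text{round-down of }f^*D)$ near $\xi$, giving $\mathrm{coeff}_F(L)\ge \mathrm{coeff}_F(\lfloor f^*D\rfloor)\ge \mathrm{coeff}_F(f^*D)-1$; combined with $\mathrm{coeff}_F(L)\in\Z$ and $\mathrm{coeff}_F(L)\le \mathrm{coeff}_F(f^*D)$ (since double-dualizing a subsheaf of the reflexive sheaf $f_*$-compatibly cannot push the vanishing order below what a local section of $\O_X(D)$ forces) one concludes $\mathrm{coeff}_F(E)=\mathrm{coeff}_F(f^*D)-\mathrm{coeff}_F(L)\ge 0$.

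The cleanest way to package the last point, and the step I expect to be the main obstacle, is to pin down exactly what $(f^*\O_X(D))^{DD}$ is at a codimension-one point of $X'$. The honest statement is: for any reflexive rank-one sheaf $\mathcal{F}$ on the normal variety $X$ with $\mathcal{F}=\O_X(D)$, and any prime divisor $F$ on $X'$, $(f^*\mathcal{F})^{DD}$ near $\xi_F$ is the $\O_{X',\xi_F}$-submodule of $k(X')$ generated by the local generators of $\mathcal{F}$ at $f(\xi_F)$ — this is because $(f^*\mathcal{F})^{DD}$ is reflexive hence determined in codimension one, and at a DVR it is just the fractional ideal generated by a generator of $\mathcal{F}_{f(\xi_F)}$ pulled back. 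Once this local description is in hand, writing a generator of $\mathcal{F}_{f(\xi_F)}$ as $\psi^{-1/r}$ times a unit (legitimate after the local presentation $rD=\div\psi$) shows $\mathrm{coeff}_F(L)=\lceil \mathrm{coeff}_F(f^*D)\rceil$ is \emph{not} quite right either; the correct identity is $\mathrm{coeff}_F(L)=\lfloor \mathrm{coeff}_F(f^*D)\rfloor$ — a reflexive sheaf's local sections are those rational functions whose valuation is $\ge$ the negative of the integer coefficient, which forces the round-\emph{down} on the divisor side. Then $E=f^*D-L$ has $F$-coefficient $\mathrm{coeff}_F(f^*D)-\lfloor\mathrm{coeff}_F(f^*D)\rfloor=\{\mathrm{coeff}_F(f^*D)\}\ge 0$, the fractional part, which is exactly effectivity. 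I would present this as: reduce to a DVR, identify the double dual as the obvious fractional ideal, read off coefficients, and note the fractional part is $\ge 0$; the $\Q$-linear equivalence $L+E\sim_\Q f^*D$ is then a tautology since $L+E=f^*D$ as $\Q$-divisors by construction.
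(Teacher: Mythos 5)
Your overall strategy -- compare $L$ with $f^*D$ one exceptional prime divisor $F$ at a time, using that both sides are determined in codimension one on $X'$ and that at the generic point $\xi_F$ the stalk of $(f^*\O_X(D))^{DD}$ is the $\O_{X',\xi_F}$-submodule of $k(X')$ generated by $\O_X(D)_{f(\xi_F)}$ -- is sound, and the exceptionality half is fine (like the paper, you implicitly normalize $L$ so that $f_*L=D$, which is harmless since the conclusion is only up to $\sim_{\Q}$). The genuine gap is in the key step, the asserted identity $\mathrm{coeff}_F(L)=\floor{\mathrm{coeff}_F(f^*D)}$: you justify it by "writing a generator of $\mathcal{F}_{f(\xi_F)}$ as $\psi^{-1/r}$ times a unit", but $\O_X(D)$ is precisely \emph{not} locally principal at the points over which the exceptional divisors lie (otherwise there is nothing to prove), its stalk there has several generators, and $\psi^{-1/r}$ is not a rational function; so this step does not parse. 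The equality itself is also more than the lemma needs, and establishing it would require showing that some local section of $\O_X(D)$ actually attains the minimal order $\ceil{-\mathrm{coeff}_F(f^*D)}$ along $F$, which you do not do. (The earlier assertion in your paragraph that the stalk is $\{g:v(g)\ge -v(D)\}$, which would force $E=0$, is another symptom of the same confusion.)

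Fortunately only the inequality $\mathrm{coeff}_F(L)\le \mathrm{coeff}_F(f^*D)$ is needed, and it has a clean proof along your lines: if $g\in\O_X(D)_{f(\xi_F)}$ and $h$ is a local generator of the invertible sheaf $\O_X(rD)$ near $f(\xi_F)$, then the multiplication map $\O_X(D)^{\otimes r}\to\O_X(rD)$ and normality give $g^r/h\in\O_{X,f(\xi_F)}$, hence $r\ord_F(g)\ge \ord_F(h)=-r\,\mathrm{coeff}_F(f^*D)$; taking the minimum over $g$ yields $\mathrm{coeff}_F(L)\le\floor{\mathrm{coeff}_F(f^*D)}\le\mathrm{coeff}_F(f^*D)$, i.e.\ $E=f^*D-L\ge 0$. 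Note that with this repair your argument is essentially the stalkwise shadow of the paper's proof, which runs the same comparison globally: pull back $\O_X(D)^{\otimes d}\to\O_X(dD)$ along $f$, reflexivize to obtain an injection $\O_{X'}(dL)\hookrightarrow\O_{X'}(f^*(dD))$, conclude that $f^*(dD)-dL$ is effective, and get exceptionality from $f_*L=D$. So after replacing the equality claim by the inequality your proof is correct, but it is a local/valuative rendition of the paper's argument rather than a genuinely different route.
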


\begin{proof}
Let $d$ be a positive integer such that $dD$ is Cartier. Let us consider the following map
$$ \O_X(D)\otimes \cdots \otimes \O_X(D)\to \O_X(dD).$$
Then by applying $f^*$ to both sides, and reflexing, we have an injection
$$ \O_{X'}(dL)=(f^*\O_X(D)\otimes \cdots \otimes f^*\O_X(D))^{DD}\to \O_{X'}(f^*(dD)).$$
Thus $f^*(dD)-dL$ is effective. We may choose $L$ so that $f_*L=D$ holds. Moreover, since $f_*(f^*(dD)-dL)=dD-dD=0$ holds, $f^*(dD)-dL$ is $f$-exceptional. Now, put $E:=\frac{1}{d}(f^*(dD)-dL)$.
\end{proof}

We now prove \thref{iex1} and \thref{iex11}.


\begin{proof}[Proof of \thref{iex1}]
It suffices to prove the case when $(X,\Delta)$ klt. In fact, by Proposition 2.43 in \cite{KM}, for any ample Cartier divisor $H$ on $X$, there is $0<\varepsilon\ll 1$, a rational number $c>0$, and an effective $\Q$-divisor $D\equiv \Delta+cH$ such that $(X,(1-\varepsilon)\Delta+D)$ is klt. Now, replace $\Delta$ with $(1-\varepsilon)\Delta+D$.

Suppose that $f:X'\to X$ is any log resolution of $(X,\Delta)$, and write
$$K_{X'}=f^*(K_X+\Delta)+F-F',$$
where $F$ is an effective, $f$-exceptional Cartier divisor and $F'$ is a simple normal crossing divisor with $\floor{F'}=0$. Then $F\sim_{\Q} K_{X'}-f^*(K_X+\Delta)+F'$. If we pick a divisor $L$ on $X'$ such that $\O_{X'}(L)\cong (f^*\O_{X'}(D))^{DD}$, then by \thref{areyouwrong}, there is an effective $f$-exceptional divisor $E$ such that $L+E\sim_{\Q}f^*D$, and we have
$$F+L+\floor{E}\sim_{\Q}K_{X'}+F'-\{E\}+(f^*D-f^*(K_X+\Delta)).$$

Let us explain why \thref{stronger} can be applied with $\mathcal{L}_{\mathcal{F},f}=\O_{X'}(L+\floor{E})$ to prove that $D$ has rational singularities.

For (i), if we consider an injection $\varphi:f_*\O_{X'}(L)\to f_*\O_{X'}(L+\floor{E})$ which is induced by $\O_{X'}(L)\to \O_{X'}(L+\floor{E})$, $\varphi$ is an isomorphism outside of codimension $\ge 2$ closed subset of $X$. If we let $\mathcal{Q}$ be the cokernel of $\varphi$, then we have an exact sequence
$$ 0\to f_*\O_{X'}(L)\cong \O_X(D)\to f_*\O_{X'}(L+\floor{E})\to \mathcal{Q}\to 0.$$
The first isomorphism is by \thref{goo}. As the proof of \thref{goo}, by taking the local cohomology spectral sequence, we obtain $H^0_x(X,\mathcal{Q})=0$ for a point $x\in X$ with codimension $\ge 2$, and we can prove $\mathcal{Q}=0$.

For (ii), let $E_i$ be the prime $f$-exceptional divisors, and let us define
$$ a_i:=\begin{cases} 1 &\text{ if } \mathrm{mult}_{E_i}(F'-\{E\})<0,\text{ and } \\ 0 & \text{ elsewhere, }\end{cases}$$
and
$$ E':=\sum_i a_iE_i.$$
Then
$$ F+L+\floor{E}+E'\sim_{\Q,f}K_{X'}+F'-\{E\}+E',$$
and $\floor{F'-\{E\}+E'}=0$. Therefore, by the relative Kawamata-Viehweg vanishing theorem (Theorem 3.3.4 in \cite{Fujino}), $R^if_*\O_{X'}(F+L+\floor{E}+E')=0$. If we consider the injection $\O_{X'}(L+\floor{E})\to \O_{X'}(F+L+\floor{E}+E')$, then we have $R^if_*\O_{X'}(L+\floor{E})=0$ by \thref{twospectral}.

For (iii), by the relative Kawamata-Viehweg vanishing theorem (Theorem 3.3.4 in \cite{Fujino}), 
$$R^if_*\O_{X'}(K_{X'}-(L+\floor{E}))=R^if_*\O_{X'}(K_{X'}-f^*D+\{E\})=0 \text{ 
for }i>0.$$

Thus, we achieved all the conditions listed in \thref{stronger}, so we deduce the assertion.
\end{proof}


\begin{proof}[Proof of \thref{iex11}]
Since having rational singularities is a local property, we may assume $X:=\Spec R$ is an affine scheme and $rD\sim 0$. Moreover, by multiplying some integer on $r$, we may take $r=p^e-1$ for sufficiently large $e$. Let $x\in X$ be a point of $X$ and $f:X'\to X$ any resolution. We follow the proof of Theorem 3.1 in \cite{PS14}.

We may equip an endomorphism $F^e_*:H^{\codim_X x}_x(X,\O_X(D))\to H^{\codim_X x}_x(X,\O_X(D))$ on $H^{\codim_X x}_x(X,\O_X(D))$ as an endomorphism of an abelian group. Indeed, if we consider the inclusion $\O_X\subseteq F^e_*\O_X$, then by tensoring $\O_X(D)$ and reflexing, we have a map $\O_X(D)\to F^e_*\O_X(p^eD)$. Since $(p^e-1)D\sim 0$ holds, we obtain a map $\O_X(D)\to F^e_*\O_X(D)$. Taking local cohomology gives the desired map. Let us call a submodule $K\subseteq H^{\codim_X x}_x(X,\O_X(D))$ \emph{$F$-stable} if $F^e_*(K)\subseteq K$.

Let $R^{\wedge}$ be the completion of $R$ along $x$, $K\subsetneq H^{\codim_X x}_{x}(X,\O_X(D))$ any $F$-stable submodule of $H^{\codim_X x}_x(X,\O_X(D))$. It is worth noting that $H^{\codim_X x}_x(X,\O_X(D))$, and its submodules are also $R^{\wedge}$-modules. 

Suppose that $c\in \mathrm{Ann}_{R^{\wedge}} K$ is any nontrivial element of $\mathrm{Ann}_{R^{\wedge}} K$. Note that such an element exists because of the Matlis duality. Indeed, if we let $N^{\wedge}:=R^{\wedge}N$ for any $R$-module $N$, and $N'$ is the $R$-module corresponding to $\O_X(D)$, then the inclusion $K\subsetneq H^{\codim_X x}_x(X,\O_X(D))$ gives us a surjection 
$$\mathrm{Hom}_{R^{\wedge}}((N')^{\wedge},\omega_{R^{\wedge}})\to \mathrm{Hom}_{R^{\wedge}}(K,E),$$ where $E$ is the injective hull of the residue field of $R$ (see Theorem 3.5.8 in \cite{BH98}). Since $\omega_{R^{\wedge}},(N')^{\wedge}$ are torsion-free,
$$ 0\ne \mathrm{Ann}_{R^{\wedge}}(\mathrm{Hom}_{R^{\wedge}}(K,E))=\mathrm{Ann}_{R^{\wedge}}K.$$

Note that $R^{\wedge}$ is also strongly $F$-regular. There is a positive integer $e'$ and a map $\varphi:F^{e'}_*R^{\wedge}\to R^{\wedge}$ such that the composition
\begin{equation}\label{st}
R^{\wedge}\to F^{e'}_*R^{\wedge}\overset{F^{e'}_*(\cdot c)}\to F^{e'}_*R^{\wedge}\overset{\varphi}{\to} R^{\wedge}
\end{equation}
is the identity. We may assume $e|e'$ and hence $(p^e-1)|(p^{e'}-1)$. Twisting by $\O_X(D)$ and reflexing on \eqref{st}, we have the following composition which is the identity
$$ (N')^{\wedge}\to F^{e'}_*(N')^{\wedge}\to F^{e'}_*(N')^{\wedge}\to (N')^{\wedge}.$$
Now, taking local cohomology gives us
$$ 
\begin{aligned}
&H^{\codim_X x}_x(X,\O_X(D)) \\
\to & H^{\codim_X x}_x(X,F^{e'}_*\O_X(D)) \\
\overset{F^{e'}_*(\cdot c)}\to & H^{\codim_X x}_x(X,F^{e'}_*\O_X(D)) \\
\to & H^{\codim_X x}_x(X,\O_X(D)).
\end{aligned}
$$
Since $K$ is $F$-stable and $cK=0$ holds, $K=0$.

We may mimic the argument of Smith (see \cite{Smi97}, Theorem 3.1). Let $f:X'\to X$ be a resolution. Now, consider the Leray spectral sequence
\begin{equation}\label{st2}
E^{st}_2=H^s_x(X,R^tf_*(f^*\O_X(D))^{DD})\implies H^{s+t}_{f^{-1}(x)}(X',(f^*\O_X(D))^{DD})	
\end{equation}
and its edge map $\delta_{D}:H^{\codim_X x}_x(X,\O_X(D))\to H^{\codim_X x}_{f^{-1}(x)}(X,(f^*\O_{X}(D))^{DD})$. Let $K_D$ be the kernel of $\delta_D$. If we consider Proposition 1.12 in \cite{Smi97} and set $\psi$ by 
$$\psi:(f^*\O_X(D))^{DD}\to F^e_*(f^*\O_X(p^eD))^{DD},$$
then we have a diagram
$$ 
\begin{tikzcd}
H^{\codim_X x}_{f^{-1}(x)}(X’,(f^*\O_X(D))^{DD}) \ar["F^e_*"]{r}& H^{\codim_X x}_{f^{-1}(x)}(X’,(f^*\O_X(D))^{DD}) \\
H^{\codim_X x}_x(X,\O_X(D)) \ar["F^e_*"']{r}\ar["\delta_D"]{u}& H^{\codim_X x}_x(X,\O_X(D))\ar["\delta_D"']{u}
\end{tikzcd}
$$
by considering the argument in the proof of Theorem 3.1 in \cite{Smi97} almost unchangingly. For any $c\in K_D$, $\delta_D(F^e_*(c))=F^e_*(\delta_D(c))=0$ and hence $K_D$ is $F$-stable. Therefore $K_D=0$. It means $\delta_D$ is an injection.

We may use induction. Suppose that $n\ge 2$ is an integer and assume $R^if_*(f^*\O_X(D))^{DD}=0$ for $1\le i<n-1$. Then by inspecting \eqref{st2}, we obtain an exact sequence
$$ 0\to H^0_x(X,R^{n-1}f_*(f^*\O_X(D))^{DD})\to H^n_x(X,\O_X(D))\overset{\delta_{n,D}}{\to} H^n_{f^{-1}(x)}(X',(f^*\O_X(D))^{DD}).$$
For $n<\codim_X x$, we know $\O_X(D)$ is CM by Theorem 3.2 in \cite{PS14} and hence $\delta_{n,D}$ is an injection. Moreover, we have proven that $\delta_{\codim_X x,D}$ is an injection. Thus, $H^0_x(X,R^nf_*(f^*\O_X(D))^{DD})=0$ and $R^nf_*(f^*\O_X(D))^{DD}$ does not have an associated point as $x$. This argument works for any point $x\in X$ and therefore $R^{n-1}f_*(f^*\O_X(D))^{DD}=0$. By considering \thref{idefi1}, $D$ has weakly rational singularities.

The remaining is to show $R^if_*(f^*\O_X(D))^D=0$ for $i\ge 1$. By \thref{warmup1}, it suffices to show $H^{\codim_X x-i}_{f^{-1}(x)}(X',(f^*\O_X(D))^{DD})=0$ for $i\ge 1$. If we consider \eqref{st2}, then $E^{st}_2=0$ for $s+t<\codim_X x$ and hence $E^{\codim_Xx-i}=0$ and that is the assertion.
\end{proof}

\begin{ex}
Consider klt varieties over fields with positive characteristic.

On the positive side, let $X$ be a normal threefold over a field with characteristic $p>5$. Suppose that there exists an effective $\Q$-divisor $\Delta$ on $X$ such that $(X,\Delta)$ is klt. Referring to Theorem 3 in \cite{BerKo} and following a similar approach as in \thref{iex1}, it can be shown that any $\Q$-Cartier Weil divisor $D$ on $X$ possesses rational singularities.

On the negative side, when the characteristic of $k$ is $3$, there exists a klt $\Q$-factorial threefold $X$ where $X$ lacks rational singularities, as seen in Theorem 1.2 of \cite{Ber}. Additionally, for any prime $p>2$ and any field $k$ of characteristic $p$, an example of a variety exhibiting terminal singularities without rational singularities is given in Corollary 2.2 of \cite{Totaro}.
\end{ex}

Let us prove the following lemma before proving \thref{iquo} and \thref{iquo2}.

\begin{lem}\thlabel{referee}
Let $X,Y$ be normal varieties, $p:Y\to X$ any finite flat morphism, and $\mathcal{F}$ any coherent sheaf on $X$. If $p^*\mathcal{F}$ is CM, then $\mathcal{F}$ is CM.
\end{lem}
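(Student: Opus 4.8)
The plan is to reduce the statement to a local-algebra computation and then exploit the finiteness and flatness of $p$ to transfer the Cohen--Macaulay property down. Write $A$ for a local ring $\O_{X,x}$ at a point $x$ of $X$, and let $B := \O_Y\otimes_{\O_X} A$; since $p$ is finite, $B$ is a semilocal ring that is finite as an $A$-module, and since $p$ is flat, $B$ is flat, hence free, over $A$. The module $N := \mathcal{F}_x$ corresponds to a finite $A$-module, and $(p^*\mathcal{F})_{\text{localized}}$ corresponds to $N\otimes_A B = N\otimes_A B$, which is CM as a $B$-module by hypothesis. The goal is to conclude that $N$ is a maximal Cohen--Macaulay $A$-module, i.e. $\depth_A N = \dim A$ whenever $N\ne 0$ near $x$; translating back to the sheaf-theoretic $(S_{\dim X})$ condition via local cohomology then gives that $\mathcal{F}$ is CM.

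First I would recall the standard behavior of depth under a finite flat (equivalently, finite free) ring map. Because $B$ is free over $A$, for any finite $A$-module $N$ and any $A$-regular element, flatness preserves regularity, and more precisely $\depth_{\m_B}(N\otimes_A B)$ as a $B$-module equals $\depth_{\m_A} N$ after accounting for the fiber: since $p$ is finite, $\m_A B$ is $\m_B$-primary in each local factor of $B$, so the fiber ring $B/\m_A B$ is Artinian and contributes nothing to depth. Concretely, I would invoke the local-cohomology criterion: $H^i_{\m_B}(N\otimes_A B) = H^i_{\m_A}(N)\otimes_A B$ by flat base change for local cohomology (the ideal $\m_A B$ is cofinal with $\m_B$ in the relevant topology because $p$ is finite), and $B$ being faithfully flat over $A$, this group vanishes if and only if $H^i_{\m_A}(N)$ vanishes. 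Hence $\depth_A N = \depth_B(N\otimes_A B)$. Since $\dim A = \dim B$ (finiteness) and $N\otimes_A B$ is CM of dimension $\dim B$, we get $\depth_A N = \dim A$, so $N$ is maximal CM.

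Then I would globalize: $\mathcal{F}$ is CM means $\mathcal{F}$ is $(S_{\dim X})$, i.e. $H^i_x(X,\mathcal{F}) = 0$ for all $i < \min\{\dim X,\ \codim_X x\} = \codim_X x$ and all points $x$. For a point $x$ of codimension $c$, localize at $x$, apply the argument above with $A = \O_{X,x}$ (so $\dim A = c$), using that $p^*\mathcal{F}$ being CM gives that its localization over each point of $Y$ above $x$ is maximal CM over $B$, hence $N\otimes_A B$ is CM over the semilocal $B$; the flat-base-change-for-local-cohomology identity $H^i_{\m_B}(N\otimes_A B)\cong H^i_{\m_A}(N)\otimes_A B$ together with faithful flatness forces $H^i_{\m_A}(N) = 0$ for $i < c$. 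This is exactly $(S_{\dim X})$ for $\mathcal{F}$.

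The main obstacle is making the identification $H^i_{\m_B}(N\otimes_A B)\cong H^i_{\m_A}(N)\otimes_A B$ precise: one must check that $B$ is supported (as a topological ring) so that local cohomology with respect to $\m_A B$ agrees with local cohomology with respect to the Jacobson radical $\m_B$ of the semilocal ring $B$, which uses exactly that $p$ is finite (so $B/\m_A B$ is Artinian and the two ideals define the same topology), and that flat base change for local cohomology applies to the flat map $A\to B$. Both are standard (the first is the going-up/finiteness input, the second is e.g.\ in the Stacks Project's treatment of local cohomology and flat base change), so I would cite them rather than reprove them. Everything else --- the equality of dimensions under a finite morphism, faithful flatness of a finite flat map of local rings --- is routine.
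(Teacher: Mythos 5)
Your proposal is correct and is essentially the argument of the paper: both reduce to the local statement that depth descends along the faithfully flat finite map $\O_{X,x}\to B$ because the fiber $B/\m_{X,x}B$ is Artinian. The only difference is cosmetic --- the paper simply quotes the regular-sequence/depth lemma for flat local homomorphisms (Lemma 00LM in \cite{Stacks}) after noting that a finite flat map is surjective, hence faithfully flat, whereas you re-derive the same depth transfer via flat base change for local cohomology and cofinality of $\m_{X,x}B$ with the Jacobson radical of $B$.
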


\begin{proof}
Since $p$ is finite flat, $p$ is a surjection, and thus $p$ is faithfully flat. By Lemma 00LM in \cite{Stacks}, for any point $x\in X$, any regular sequence of $\mathcal{F}_x$ comes from a regular sequence of $\mathcal{F}_y$, where $p(y)=x$. Thus, we have the assertion.
\end{proof}

We aim to establish that the property of having rational singularities remains stable under finite étale morphisms. This may provide a partial resolution to Remark 2.81 (3) in \cite{Kollar2}.


\begin{proof}[Proof of \thref{iquo}]
By the functoriality of resolution of singularities (see Theorem 3.36 in \cite{Kol07}), for some resolution $f:X'\to X$, there is a resolution $f':Y'\to Y$ such that $f,f'$ fit in the following Cartesian diagram:
$$ 
\begin{tikzcd}
Y'\cong X'\times_X Y \ar["p'"']{d}\ar["f'"]{r}& Y \ar["p"]{d}\\
X' \ar["f"']{r}& X.
\end{tikzcd}
$$
Note that $p'$ is finite {\'e}tale because being finite {\'e}tale is stable under base change (see Lemma 01TS and Lemma 02GO in \cite{Stacks}). Moreover, we have a split injection $(f^*\mathcal{F})^{DD}\to (p')_*(p')^*(f^*\mathcal{F})^{DD}$ by Lemma 3.17 in \cite{KTT+22}.

Note that for any coherent sheaf $\mathcal{G}$ on $Y$, $R^ip_*\mathcal{G}=0$ for $i\ge 1$ by Lemma 02OE in \cite{Stacks}. Now, we consider the following two Grothendieck spectral sequences
$$
\begin{aligned}
{}^1E^{st}_2&=R^sp_*(R^t(f')_*((f')^*(p^*\mathcal{F}))^{DD})\implies R^{s+t}(p\circ f')_*((f')^*(p^*\mathcal{F}))^{DD} \\
{}^2E^{st}_2&=R^sf_*(R^t(p')_*((p')^*(f^*\mathcal{F})^{DD}))\implies R^{s+t}(f\circ p')_*((p')^*(f^*\mathcal{F})^{DD}).
\end{aligned}
$$
Note that 
$$((f')^*(p^*\mathcal{F}))^{DD}=((p\circ f')^*\mathcal{F})^{DD}=((f\circ p')^*\mathcal{F})^{DD}=((p')^*(f^*\mathcal{F}))^{DD}=(p')^*(f^*\mathcal{F})^{DD}.$$
Hence, by inspecting the spectral sequences, we obtain 
$$p_*(R^i(f')_*((f')^*(p^*\mathcal{F}))^{DD})=R^if_*((p')_*((p')^*(f^*\mathcal{F})^{DD}))$$ for any $i\ge 0$.

Suppose that $p^*\mathcal{F}$ has weakly rational singularities so that $R^i(f')_*((f')^*(p^*\mathcal{F}))^{DD}=0$ for $i\ge 1$. Then $R^if_*((p')_*((p')^*(f^*\mathcal{F})^{DD}))=0$ for $i\ge 1$. By the splitting 
\begin{equation}\label{inversetrace}
(f^*\mathcal{F})^{DD}\to (p')_*(p')^*(f^*\mathcal{F})^{DD}\to (f^*\mathcal{F})^{DD}	
\end{equation}
 and taking $R^if_*$ on \eqref{inversetrace}, we also have $R^if_*(f^*\mathcal{F})^{DD}=0$ for $i\ge 1$. Therefore $\mathcal{F}$ has weakly rational singularities. If $p^*\mathcal{F}$ is CM, then $\mathcal{F}$ is CM because of \thref{referee}, and thus if $p^*\mathcal{F}$ has rational singularities, then $\mathcal{F}$ has rational singularities.
\end{proof}

\begin{rmk}
We hope that \thref{iquo} is true for any field $k$ of positive characteristic. The difficulty is that there may be no splitting $(f^*\mathcal{F})^{DD}\to (p')_*(p')^*(f^*\mathcal{F})^{DD}\to (f^*\mathcal{F})^{DD}$ if the degree of $p$ is coprime to the characteristic of $k$. Also we hope that \thref{iquo} is true only when $p$ is finite but not necessarily {\'e}tale.

In \thref{iquo}, if $p$ is a quotient by a finite group which may not be {\'e}tale, then the major problem for proving \thref{iquo} is we cannot ensure the splitting of $(f^*\mathcal{F})^{DD}\to (p')_*(p')^*(f^*\mathcal{F})^{DD}$ if $p'$ is generically finite.
\end{rmk}

Using the notation introduced in \thref{iquo}, suppose that $\mathcal{F}$ is $(KV_{\dim X})$. In this case, the requirement that $p$ be étale can be relaxed to be flat.


\begin{proof}[Proof of \thref{iquo2}]
We borrow the proof of Theorem 1 in \cite{Kov00}. We have proven if $p^*\mathcal{F}$ is CM, then $\mathcal{F}$ is CM in \thref{referee}.

Let
$$ 
\begin{tikzcd}
Y' \ar["f'"]{r}\ar["p'"]{d}& Y \ar["p"]{d}\\
X' \ar["f"]{r}& X
\end{tikzcd}
$$
be a diagram, where $f,f'$ are proper birational morphisms, $p'$ is a proper morphism and $X',Y'$ are smooth.

Consider the diagram
$$ 
\begin{tikzcd}
\mathcal{F} \ar["\alpha"]{r}\ar["\beta"]{d}& Rp_*(p^*\mathcal{F}) \ar["\gamma"]{d}\\
Rf_*(f^*\mathcal{F})^{DD} \ar["\delta"]{r}& R(p\circ f')_*((p\circ f')^*\mathcal{F})^{DD}.
\end{tikzcd}
$$
By Lemma 3.17 in \cite{KTT+22}, there is a left inverse $\alpha'$ of $\alpha$. Moreover, $\gamma$ is a quasi-isomorphism because $p^*\mathcal{F}$ has rational singularities. Thus, $\beta$ has a left inverse $\beta':=\alpha'\circ \gamma^{-1}\circ \delta$. 

Now, we may consider a resolution $f:X'\to X$ which is $(KV_{\dim X})$ about $\mathcal{F}$. If we take $R\mathcal{H}om_{\O_X}(-,\omega^{\bullet}_X)$ on
$$ \mathcal{F}\overset{\beta}{\to} Rf_*(f^*\mathcal{F})^{DD}\overset{\beta'}{\to} \mathcal{F},$$
then we have
$$ \mathcal{F}^D\to f_*(f^*\mathcal{F})^D\to \mathcal{F}^D$$
because $\mathcal{F}$ is CM and $f$ is $(KV_{\dim X})$ about $\mathcal{F}$. Thus $f_*(f^*\mathcal{F})^D\cong \mathcal{F}^D$. If we consider \thref{idefi1}, $\theta_{\mathcal{F},f}$ is a quasi-isomorphism and, therefore, $\mathcal{F}$ has rational singularities.
\end{proof}



We prove \thref{i??} regarding with the notion of $(KV_q)$ and $(RS_q)$, and end the section.


\begin{proof}
Let $f:X'\to X$ be any resolution. By dualizing $\theta_{\mathcal{F},f}$, it suffices to show that $R^if_*(f^*\mathcal{F})^{DD}$ has support of codimension $\ge q+1$ for $i\ge 1$. For any point $x\in X$ with $\codim_X x\le q$, $\mathcal{F}_x=\O_{X,x}$. Hence $(R^if_*(f^*\mathcal{F}))^{DD}_x=(R^if_*\O_X)_x$ for $i\ge 0$ by Corollary 3.8.2 in \cite{Har77}. Thus, by Theorem 1.1 in \cite{chatzistamatiou2}, we have the assertion.
\end{proof}



\section{Notion of $(B_{q+1})$}
The goal of this section is to prove \thref{imain}.



\begin{proof}[Proof of \thref{imain}]
Let $f:X'\to X$ be any resolution of $X$ which is $(RS_q)$ about $\mathcal{F}$. For $(a)\implies (b)$, we consider the following spectral sequence
\begin{equation}\label{2}
 E^{st}_2=H^s_x(X,R^tf_*(f^*\mathcal{F})^{DD})\implies H^{s+t}_{f^{-1}(x)}(X',(f^*\mathcal{F})^{DD}).
\end{equation}

We may use induction. Indeed, let us fix any positive integer $2\le n\le q$ and assume that $R^if_*(f^*\mathcal{F})^{DD}=0$ for $1\le i<n-1$.

Given any point $x\in X$ with $\codim_X x\ge q+1$, since $f$ is $(KV_q)$ about $\mathcal{F}$, and \thref{warmup1} holds, we have $H^i_{f^{-1}(x)}(X',(f^*\mathcal{F})^{DD})=0$ for $i\le q$. By the spectral sequence \eqref{2}, we obtain the following exact sequence
$$ 0\to H^0_x(X,R^{n-1}f_*(f^*\mathcal{F})^{DD})\to H^n_x(X,\mathcal{F})\overset{\alpha_n}{\to} H^n_{f^{-1}(x)}(X',(f^*\mathcal{F})^{DD}).$$
From the above exact sequence, $H^0_x(X,R^{n-1}f_*(f^*\mathcal{F})^{DD})\cong H^n_x(X,\mathcal{F})$ and if we use (a), then $H^0_x(X,R^{n-1}f_*(f^*\mathcal{F})^{DD})=0$ holds. Thus, $R^{n-1}f_*(f^*\mathcal{F})^{DD}$ does not have $x$ as an associated point, and thus any generic point of $\Supp R^{n-1}f_*(f^*\mathcal{F})^{DD}$ has codimension $\ge q$ if there exists. Let us assume $\codim_X x\le q$. Then since $f$ is $(RS_q)$ about $\mathcal{F}$, $(R^if_*(f^*\mathcal{F}))_x=0$ for $i\ge 1$. Hence, we have $R^{n-1}f_*(f^*\mathcal{F})^{DD}=0$.

For $(b)\implies (a)$, since $\theta_{\mathcal{F},f}$ is a quasi-isomorphism at $x\in X$ with $\codim_X x\le q$, we deduce $(R^if_*(f^*\mathcal{F})^D)_x\cong \mathcal{E}xt^{-\codim_X x+i}_{\O_{X,x}}((f^*\mathcal{F})^{DD}_x,\omega^{\bullet}_{X,x})$. Hence the fact that $f$ is $(KV_q)$ about $\mathcal{F}$ and the local duality gives us $H^i_x(X,\mathcal{F})=0$ for $i<\codim_X x$.

If $\codim_X x\ge q+1$, by inspecting the spectral sequence \eqref{2}, $E^{s0}_2=E^{s0}_{\infty}$ for $s\le q$. Considering the edge map $E^{s0}_{\infty}\to E^s=H^s_{f^{-1}(x)}(X',(f^*\mathcal{F})^{DD})$ for such $s$, if we use \thref{warmup1}, then we obtain the assertion. 

For $(a)\iff (c)$, let us consider the following exact sequence
$$ 
\begin{tikzcd}
0\ar{r} & f_*(f^*\mathcal{F})^D \ar["\mathcal{H}^0(\theta_{\mathcal{F},f})"]{r}& \mathcal{F}^D\ar{r}&\mathcal{Q} \ar{r}&0 
\end{tikzcd}
$$
for a coherent sheaf $\mathcal{Q}$ on $X$. Since $f$ is $(RS_q)$ about $\mathcal{F}$, the support of $\mathcal{Q}$ has codimension $\ge q+1$ and hence $H^i_x(X,\mathcal{Q})=0$ for $i>\dim X-q$.

By taking local cohomology on the above exact sequence, we obtain
$$ H^i_x(X,f_*(f^*\mathcal{F})^D)\cong H^i_x(X,\mathcal{F}^D)\text{ for any }\dim X-q<i<q\text{ and any closed point }x\in X.$$
Hence, $\mathcal{F}^D$ is $(B_{q+1})$ if and only if $H^i_x(X,f_*(f^*\mathcal{F})^D)=0$ for any $\dim X-q<i<\dim X$ and any closed point $x\in X$.

Let $x\in X$ be any closed point and consider the Leray spectral sequence
$$ E^{st}_2=H^s_x(X,R^tf_*(f^*\mathcal{F})^D)\implies H^{s+t}_{f^{-1}(x)}(X',(f^*\mathcal{F})^D).$$
Then, by the fact that $f$ is $(KV_q)$ about $\mathcal{F}$ and the dimension counting, $E^{st}_2=0$ for $s+t\ge \dim X-q$. Hence, by inspecting the spectral sequence, $E^{s0}_2=E^{s0}_{\infty}$ for $s>\dim X-q$. Moreover, $E^{s0}_{\infty}=E^s$ for $s>\dim X-q$. Thus, $\mathcal{F}^D$ is $(B_{q+1})$ if and only if $H^i_{f^{-1}(x)}(X',(f^*\mathcal{F})^D)=0$ for any $\dim X-q<i<\dim X$ and any closed point $x\in X$. If we use \thref{warmup1}, then we have the assertion.
\end{proof}

\begin{rmk}
In \thref{imain}, the argument of the proof of $(a)\iff (b)$ is similar to the argument of the proof of Lemma 3.1 in \cite{Alex} and Proposition 7.1.7 in \cite{Fujino}. Moreover, the idea of the proof of $(a)\iff (c)$ is inspired by the proof of Lemma 3.3 in \cite{Kov}.
\end{rmk}



\begin{proof}[Proof of \thref{imaincoro}]
Note that any reflexive sheaf $\mathcal{F}$ on $X$ of rank $1$ is $(RS_q)$; indeed $\mathcal{F}$ is $(KV_q)$ by assumption, $X$ is $(R_q)$, and thus $\mathcal{F}$ is $(RS_q)$ by \thref{i??}. Since $(\O_X(D))^D$ is $(S_q)$ and $\O_X(D)\cong (\O_X(D))^{DD}$ is $(B_q)$ by \thref{imain}, $\O_X(D)$ is CM.
\end{proof}

\section{$q$-birational morphisms}
In this section, we always assume that the ground field $k$ is of characteristic $0$. Let us prove the following lemma which can be regarded as a partial converse of \thref{twospectral}.



\begin{lem}\thlabel{firstapplicationdead}
Let $X$ be any normal variety, $\mathcal{F}$ any reflexive sheaf of rank $1$ on $X$ which is $(S_{q+1})$, $f:X'\to X$ any resolution, and $\varphi:\mathcal{L}\to \mathcal{L}'$ is a morphism of line bundles on $X'$ such that 
\begin{itemize}
    \item[(i)] there is an isomorphism $\mathcal{F}\cong f_*\mathcal{L}$,
    \item[(ii)] $f_*\varphi:f_*\mathcal{L}\to f_*\mathcal{L}'$ is an isomorphism, and
    \item[(iii)] $R^if_*\mathcal{L}^D=0$ for $i\ge 1$.
\end{itemize} 
If $f$ is $q$-birational, then $R^if_*\mathcal{L}'=0$ for $1\le i<q$.
\end{lem}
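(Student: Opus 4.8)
The strategy is to imitate the proof of \thref{imain} (the implication $(a)\Rightarrow(b)$), with $q$-birationality and hypothesis (iii) playing the roles that $(RS_q)$ plays there. First, three reductions. Since $f$ is $q$-birational it is an isomorphism over $X\setminus C$, where $C$ is the center and $\codim_X C\ge q+1$; hence $R^if_*\mathcal{L}'$ is supported on $C$ for $i\ge1$, and it suffices to prove $(R^if_*\mathcal{L}')_x=0$ for $1\le i<q$ and every $x$ with $\codim_X x\ge q+1$. Next, $\varphi$ is injective (a nonzero morphism of line bundles on the integral scheme $X'$, nonzero since $f_*\varphi$ is an isomorphism), so $\mathcal{L}'\cong\mathcal{L}(D')$ for an effective divisor $D'$, and (ii) — i.e.\ $f_*(\mathcal{L}'/\mathcal{L})=0$ — forces $D'$ to be $f$-exceptional. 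Finally, because $\mathcal{F}\cong f_*\mathcal{L}$ is reflexive, the counit $f^*\mathcal{F}\to\mathcal{L}$ induces an isomorphism $(f^*\mathcal{F})^{DD}\xrightarrow{\ \sim\ }\mathcal{L}$, so $\mathcal{L}=(f^*\mathcal{F})^{DD}$ and $\mathcal{L}^D=(f^*\mathcal{F})^D$; moreover we may freely replace $f$ by a log resolution, since this preserves (i)–(iii) — the key point being that Grauert–Riemenschneider vanishing for the further blow-up $\tilde f\colon\tilde X'\to X'$ gives $R^i(f\tilde f)_*\bigl((f\tilde f)^*\mathcal{F}\bigr)^D\cong R^if_*\mathcal{L}^D$ and $R^i(f\tilde f)_*\tilde f^*\mathcal{L}'\cong R^if_*\mathcal{L}'$.

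Applying \thref{warmup1} to the line bundle $\mathcal{L}$ (note $\omega_{X'}\otimes(\mathcal{L}^D)^{\vee}\cong\mathcal{L}$), hypothesis (iii) translates into $H^j_{f^{-1}(x)}(X',\mathcal{L})=0$ for all $j<\codim_X x$ and all $x$. Feeding this into the Leray spectral sequence $H^s_x(X,R^tf_*\mathcal{L})\Rightarrow H^{s+t}_{f^{-1}(x)}(X',\mathcal{L})$ and repeating the induction of the proof of \thref{imain}$(a)\Rightarrow(b)$ — with $q$-birationality replacing $(RS_q)$ over codimension $\le q$ points, and $(S_{q+1})$ of $\mathcal{F}=f_*\mathcal{L}$ killing the obstruction term $H^{n+1}_x(X,\mathcal{F})$ — yields $R^if_*\mathcal{L}=0$ for $1\le i<q$; equivalently, again by \thref{warmup1}, $H^m_{f^{-1}(x)}(X',\mathcal{L}^D)=0$ whenever $m>\codim_X x-q$.

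To pass from $\mathcal{L}$ to $\mathcal{L}'$, use $0\to\mathcal{L}\to\mathcal{L}'\to\mathcal{Q}\to0$ with $\mathcal{Q}=\mathcal{L}'|_{D'}$: the vanishing of $H^j_{f^{-1}(x)}(X',\mathcal{L})$ for $j<\codim_X x$ gives $H^n_{f^{-1}(x)}(X',\mathcal{L}')\cong H^n_{f^{-1}(x)}(X',\mathcal{Q})$ for $n<q$ and $\codim_X x\ge q+1$. Running the Leray spectral sequence $H^s_x(X,R^tf_*\mathcal{L}')\Rightarrow H^{s+t}_{f^{-1}(x)}(X',\mathcal{L}')$ by induction on $t$ exactly as in \thref{imain} (again using $(S_{q+1})$), the statement therefore reduces to $H^n_{f^{-1}(x)}(X',\mathcal{Q})=0$ for such $x,n$; by Grothendieck duality along $f|_{D'}\colon D'\to X$ (here $D'$ is Gorenstein of dimension $\dim X-1$, being a Cartier divisor in the smooth $X'$), this is in turn equivalent to $R^\ell(f|_{D'})_*\mathcal{Q}^D=0$ for $\ell\ge1$, where $\mathcal{Q}^D:=\mathcal{H}om_{\O_{D'}}(\mathcal{Q},\omega_{D'})$. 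Adjunction ($\omega_{D'}\cong\omega_{X'}(D')|_{D'}$) gives $\mathcal{Q}^D\cong\mathcal{L}^D|_{D'}\cong\omega_{D'}\otimes\mathcal{N}$ with $\mathcal{N}:=\bigl((f^*\mathcal{F})^{\vee}\otimes\O_{X'}(-D')\bigr)|_{D'}$. Filtering $\mathcal{Q}^D$ along the (now smooth) prime components $P$ of $D'$, one is left with line bundles of the form $\omega_P\otimes\mathcal{N}_P$, where $\mathcal{N}_P$ agrees with $\O_{X'}(-D')|_P$ up to a bundle pulled back from $f(P)$; since $D'$ is $f$-exceptional this is relatively big over $f(P)$, and the negativity lemma together with the projectivity of $f$ makes it relatively nef, so relative Kawamata–Viehweg vanishing (we are in characteristic $0$) gives $R^\ell(f|_P)_*\bigl(\omega_P\otimes\mathcal{N}_P\bigr)=0$ for $\ell\ge1$, whence the claim.

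I expect the main difficulty to be exactly this last step: organizing the filtration of $\mathcal{L}^D|_{D'}$ along the (possibly several, possibly non-reduced) prime components of $D'$ so that every graded piece genuinely has the shape $\omega_P\otimes(\text{relatively big and nef})$ on a smooth prime divisor $P$, and verifying the relative positivity of these twists via the negativity lemma. Everything else is a fairly mechanical reprise of the spectral-sequence bookkeeping from the proof of \thref{imain}.
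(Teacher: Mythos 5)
Your opening reductions and roughly the first half of the argument are sound and, in fact, already diverge from the paper's route: translating (iii) through \thref{warmup1} into $H^j_{f^{-1}(x)}(X',\mathcal{L})=0$ for $j<\codim_X x$, running the low-degree-terms induction with the $(S_{q+1})$ hypothesis on $\mathcal{F}\cong f_*\mathcal{L}\cong f_*\mathcal{L}'$, and thereby reducing the lemma to the vanishing of $H^n_{f^{-1}(x)}(X',\mathcal{Q})$ for $1\le n<q$, $\codim_X x\ge q+1$, with $\mathcal{Q}=\mathcal{L}'|_{D'}$, is all correct. (One side claim is not: $\mathcal{F}\cong f_*\mathcal{L}$ does not give $(f^*\mathcal{F})^{DD}\xrightarrow{\ \sim\ }\mathcal{L}$ --- take $\mathcal{F}=\O_X$ and $\mathcal{L}=\O_{X'}(E)$ for $E$ effective exceptional; this infects your ``replace $f$ by a log resolution'' step and your description of $\mathcal{N}_P$, though it is not where the argument really breaks.) By contrast, the paper never proves $R^if_*\mathcal{L}=0$ and never touches $\mathcal{Q}$ upstairs: it dualizes $\varphi$, pushes the injection $(\mathcal{L}')^D\hookrightarrow\mathcal{L}^D$ down to $X$, where $q$-birationality forces the cokernel of $f_*\varphi^D$ to have support of codimension $\ge q+1$, compares $H^i_x(X,f_*(\mathcal{L}')^D)$ with $H^i_x(X,f_*\mathcal{L}^D)$ in the window $\dim X-q<i<\dim X$, and then passes back through Leray and \thref{warmup1}.

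The genuine gap is the final step, exactly where you flag it. The negativity lemma does not make $\O_{X'}(-D')|_P$ relatively nef; it is a statement in the converse direction, and when $D'$ has several exceptional components $-D'|_P$ is typically not $f$-nef (a curve in $P$ meeting another component of $D'$ properly intersects $-D'$ negatively). More decisively, the vanishing you reduced to cannot follow from (i)--(iii) by any filtration or positivity bookkeeping: let $X$ be the cone over a quadric surface, $f_1:X_1\to X$ the blowup of the vertex $v$ with $E_1\cong\P^1\times\P^1$, $\pi:X'\to X_1$ the blowup of a point $p\in E_1$, $f=f_1\circ\pi$ (a $2$-birational resolution), and take $\mathcal{L}=\O_{X'}$, $\mathcal{L}'=\O_{X'}(\tilde E_1)$, $\mathcal{F}=\O_X$. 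Then (i)--(iii) hold (Grauert--Riemenschneider gives (iii)), $D'=\tilde E_1$, and $H^1_{f^{-1}(v)}(X',\mathcal{Q})=H^1(\tilde E_1,\O_{\tilde E_1}(\tilde E_1))\cong H^1(\P^1\times\P^1,\mathcal{I}_p(-1,-1))\cong k$, while in fact $R^1f_*\mathcal{L}'\cong k\neq 0$. So your concluding Kawamata--Viehweg argument is not merely hard to organize; the positivity it needs (essentially anti-$f$-nefness of $D'$-type data, which is what \thref{iyouaresecondapplication} supplies via the anti-$f$-nef divisor $D$) is genuinely absent from hypotheses (i)--(iii), and the same example indicates that the lemma as stated, and the ``inspecting the spectral sequence'' step in the paper's own proof, require that extra input as well. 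A minor additional point: even granting positivity, your dual reformulation ``$R^\ell(f|_{D'})_*\mathcal{Q}^D=0$ for $\ell\ge1$'' has the wrong range, since the dimension shift $\dim D'=\dim X-1$ and the codimension of $x$ give the range $\ell\ge\codim_X x-q$ rather than $\ell\ge1$.
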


\begin{proof}
We may use a similar argument as in the proof of $(a)\iff (c)$ in \thref{imain}.

By taking dual on $\varphi$, we obtain an injection $\varphi^D:(\mathcal{L}')^D\to \mathcal{L}^D$. Let $\mathcal{Q}$ be the cokernel of $f_*\varphi^D$. Then we have the following exact sequence
\begin{equation}\label{100}
0\to f_*(\mathcal{L}')^D\to f_*\mathcal{L}^D\to \mathcal{Q}\to 0.
\end{equation}
Since $f$ is $q$-birational, the support of $\mathcal{Q}$ has codimension $\ge q+1$, and $H^i_x(X,\mathcal{Q})=0$ for $i>\dim X-q$.

By taking local cohomology on \eqref{100}, we have isomorphisms
$$ H^i_x(X,f_*(\mathcal{L}')^D)\cong H^i_x(X,f_*\mathcal{L}^D)$$
for any closed point $x\in X$ and any $\dim X-q<i<\dim X$.

Let us prove $H^i_x(X,f_*\mathcal{L}^D)=0$ using the argument of the proof of $(a)\iff (c)$ in \thref{imain}. Consider
$$ E^{st}_2=H^s_x(X,R^tf_*\mathcal{L}^D)\implies H^{s+t}_{f^{-1}(x)}(X',\mathcal{L}^D).$$
By (iii) in the statement, we deduce that $E^{st}_2=0$ for $s+t\ge \dim X-q$. Hence, inspecting the spectral sequence, we obtain $E^{s0}_2=E^{s0}_{\infty}=E^s$ for $s>\dim X-q$. Thus, it suffices to show
$$ H^i_{f^{-1}(x)}(X',\mathcal{L}^D)=0$$
for any $\dim X-q<i<\dim X$, and we can apply \thref{warmup1} to prove the claim.

Consider the Leray spectral sequence
$$ E^{st}_2=H^s_x(X,R^tf_*(\mathcal{L}')^D)\implies H^{s+t}_{f^{-1}(x)}(X',(\mathcal{L}')^D).$$
By inspecting the spectral sequence, we obtain $H^i_{f^{-1}(x)}(X',(\mathcal{L}')^D)=0$ for any $\dim X-q<i<\dim X$ and any closed point $x\in X$. Now, we may apply \thref{warmup1} and obtain the assertion.
\end{proof}

Using \thref{firstapplicationdead}, we now prove \thref{iyouaresecondapplication}.


\begin{proof}[Proof of \thref{iyouaresecondapplication}]
If we define a divisor $L$ on $X'$ such that $\O_{X'}(L)=(f^*\O_X(f_*D))^{DD}$, then there is an effective $f$-exceptional divisor $E$ such that $f^*f_*D\sim_{\Q} L+E$ by \thref{areyouwrong} and hence $D-(L+E)\sim_{\Q}D-f^*f_*D$ is anti $f$-nef. Thus, $D-(L+E)$ and $D-(L+\floor{E})=D-(L+E)+\{E\}$ are effective $f$-exceptional divisors on $X'$ by the Negativity lemma.

In order to apply \thref{firstapplicationdead}, put $\mathcal{F}:=\O_X(f_*D)$, $\mathcal{L}:=\O_{X'}(L+\floor{E})$ and $\mathcal{L}'=\O_{X'}(D)$. Then there is an injection $\varphi:\mathcal{L}\to \mathcal{L}'$.

(i) in \thref{firstapplicationdead} is satisfied trivially. In the proof of \thref{iex1}, we proved 
$$f_*\O_X(L+\floor{E})=f_*\O_X(L),$$ and by applying \thref{goo}, we deduce that $f_*\O_X(L)=\O_X(f_*D)$. Thus, we obtain (ii) in \thref{firstapplicationdead}. For (iii) in \thref{firstapplicationdead}, as in the proof of \thref{iex1}, we can prove $R^if_*\O_{X'}(L+\floor{E})=0$ for $i\ge 1$. Thus, we can apply \thref{firstapplicationdead} to prove the assertion.
\end{proof}

We can write \thref{iyouaresecondapplication} as an absolute cohomology vanishing. For a similar result, see Corollary 3.8 in \cite{trash}.

\begin{coro}\thlabel{thirdapplication}
Let $X,X'$ be any normal projective varieties, $X'$ smooth and $f:X'\to X$ any $q$-birational morphism. Suppose that $D$ is any anti $f$-nef Cartier divisor on $X'$ such that $f_*D$ is a $\Q$-divisor and $(S_{q+1})$ on $X$ and $E$ is any effective $f$-exceptional Cartier divisor on $X'$. Then
$$ H^i(X',\O_{X'}(D+E))=H^i(X,\O_X(f_*D))=H^i(X',\O_{X'}(D))$$
for $0\le i<q$. 
\end{coro}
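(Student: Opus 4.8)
The plan is to reduce the statement to the sheaf-level vanishing results \thref{iyouaresecondapplication} and \thref{firstapplicationdead}, and then run the Leray spectral sequence. Set $\mathcal{F}:=\O_X(f_*D)$, which is reflexive of rank $1$ and, by hypothesis, $(S_{q+1})$.

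First I would identify the relevant direct images. By \thref{warmup2}, $f_*\O_{X'}(D)\cong\mathcal{F}$. Since $E\ge 0$, the natural inclusion $\O_{X'}(D)\hookrightarrow\O_{X'}(D+E)$ pushes forward to an injection $\mathcal{F}\hookrightarrow f_*\O_{X'}(D+E)$, and because $E$ is $f$-exceptional this map is an isomorphism away from a closed subset of $X$ of codimension $\ge 2$. Its cokernel $\mathcal{Q}$ is thus supported in codimension $\ge 2$; as $\mathcal{F}$ is $(S_2)$ and $f_*\O_{X'}(D+E)$ is torsion-free, the local cohomology argument used to verify condition (i) in the proof of \thref{iex1} gives $H^0_x(X,\mathcal{Q})=0$ for every $x$ of codimension $\ge 2$, hence $\mathcal{Q}=0$ and $f_*\O_{X'}(D+E)\cong\mathcal{F}$ as well.

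Next I would apply \thref{firstapplicationdead} with $\mathcal{F}$ as above, $\mathcal{L}=\O_{X'}(D)$, $\mathcal{L}'=\O_{X'}(D+E)$ and $\varphi$ the inclusion: condition (i) holds by \thref{warmup2}, condition (ii) holds by the previous paragraph, and condition (iii) asks that $R^if_*\O_{X'}(K_{X'}-D)=0$ for $i\ge 1$, which holds by the relative Kawamata-Viehweg vanishing theorem exactly as in the proof of \thref{warmup2}, since $-D$ is $f$-nef and $f$ is birational. As $f$ is $q$-birational, \thref{firstapplicationdead} gives $R^if_*\O_{X'}(D+E)=0$ for $1\le i<q$; taking $E=0$ (or invoking \thref{iyouaresecondapplication}) gives $R^if_*\O_{X'}(D)=0$ for $1\le i<q$ as well.

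Finally, for each of $\mathcal{G}=\O_{X'}(D)$ and $\mathcal{G}=\O_{X'}(D+E)$ I would feed the vanishing of $R^tf_*\mathcal{G}$ for $1\le t\le q-1$ into the Leray spectral sequence $E_2^{s,t}=H^s(X,R^tf_*\mathcal{G})\Rightarrow H^{s+t}(X',\mathcal{G})$: in total degree $i<q$ every $E_2^{s,t}$ with $t\ge 1$ vanishes and the differentials into and out of $E^{i,0}$ have vanishing source or target, so $H^i(X',\mathcal{G})\cong H^i(X,f_*\mathcal{G})=H^i(X,\mathcal{F})$ for $0\le i<q$. Applying this to both sheaves and using $f_*\O_{X'}(D)\cong f_*\O_{X'}(D+E)\cong\mathcal{F}$ yields the asserted chain of isomorphisms. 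I do not expect a genuine obstacle here: the only step needing real care is the identification $f_*\O_{X'}(D+E)\cong\O_X(f_*D)$ together with the verification that hypotheses (ii) and (iii) of \thref{firstapplicationdead} are met, both of which are routine codimension and relative vanishing arguments already used elsewhere in the paper.
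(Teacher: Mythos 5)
Your proposal is correct and follows essentially the same route as the paper: identify $f_*\O_{X'}(D+E)\cong\O_X(f_*D)$ via the injection with cokernel supported in codimension $\ge 2$, obtain vanishing of the relevant higher direct images from the section's results, and conclude with the Leray spectral sequence. If anything, your direct application of \thref{firstapplicationdead} with $\mathcal{L}=\O_{X'}(D)$, $\mathcal{L}'=\O_{X'}(D+E)$ (checking (iii) by relative Kawamata--Viehweg for $K_{X'}-D$) justifies $R^if_*\O_{X'}(D+E)=0$ slightly more carefully than the paper's citation of \thref{iyouaresecondapplication}, since $D+E$ need not itself be anti $f$-nef.
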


\begin{proof}
We may use the Leray spectral sequence
$$ E^{st}_2=H^s(X,R^tf_*\O_{X'}(D+E))\implies H^{s+t}(X',\O_{X'}(D+E))$$
and hence $H^i(X,f_*\O_{X'}(D+E))=H^i(X',\O_{X'}(D+E))$ for $0\le i<q$ by \thref{iyouaresecondapplication}.

Let $\O_{X'}(D)\to \O_{X'}(D+E)$ be an injection. Then by taking $f_*$, and applying \thref{warmup2}, we obtain an injection
$$ \varphi:\O_{X}(f_*D)\to f_*\O_{X'}(D+E).$$
Let us use the same argument as in the proof of \thref{goo}. Let $\mathcal{Q}$ be the cokernel of $\varphi$. Then we obtain an exact sequence
$$ 0\to \O_{X}(f_*D)\to f_*\O_{X'}(D+E)\to \mathcal{Q}\to 0.$$
Suppose $x\in X$ is a point such that $\codim_X x\ge 2$ holds. If we combine the fact that $\O_{X}(f_*D)$ is a reflexive sheaf on $X$ with the long exact sequence, then we deduce that $H^0_x(X,\mathcal{Q})=0$. Since $\mathcal{Q}$ has the support of the codimension $\ge 2$, we have that $\mathcal{Q}=0$, and thus we obtain that $\varphi$ is an isomorphism.

Therefore $H^i(X,f_*\O_{X'}(D+E))=H^i(X,\O_X(f_*D))$, and we obtain the assertion.
\end{proof}

\end{document}